\tikzstyle{vertex} = [outer color=black,draw, color=black,line width=0.2mm, inner color=black, circle,inner sep=0.2mm,minimum size=0.2mm] %edit to change vertex dot size
\tikzstyle{edge} = [line width = 0.3mm] %edit to change line width
\DeclareRobustCommand{\cev}[1]{%
  {\mathpalette\do@cev{#1}}%
}
\newcommand{\do@cev}[2]{%
  \vbox{\offinterlineskip
    \sbox\z@{$\m@th#1 x$}%
    \ialign{##\cr
      \hidewidth\reflectbox{$\m@th#1\vec{}\mkern4mu$}\hidewidth\cr
      \noalign{\kern-\ht\z@}
      $\m@th#1#2$\cr
    }%
  }%
}
\def\:{\colon}
\def\.{,\dots,}
\def\ZZ{\mathbf Z}
\def\NN{\mathbb N}
\def\Mgn{\overline{\mathcal{M}}_{g,n}}
\def\DMgn{\overline{\mathcal{M}}_{g,A}}
\def\RMgn{\widetilde{\mathcal{M}}_{g,A}}
\def\gp{\textrm{gp}}
\def\gp{\textup{gp}}
\def\DR{\mathsf{DR}^k_{g,A}}
\def\logDR{\mathsf{logDR}^k_{g,A}}
\def\multiset#1#2{\ensuremath{\left(\kern-.3em\left(\genfrac{}{}{0pt}{}{#1}{#2}\right)\kern-.3em\right)}}
   \newtheorem{theorem}[subsection]{Theorem}
      \newtheorem*{theorem*}{Theorem}
   \newtheorem{lemma}[subsection]{Lemma}
   \newtheorem{corollary}[subsection]{Corollary}
   \newtheorem*{conjecture*}{Conjecture}
\theoremstyle{definition}
          \newtheorem*{exercise*}{Exercise}
   \newtheorem{example}[subsubsection]{Example}
   \newtheorem*{example*}{Example}
   \newtheorem{definition}[subsection]{Definition}
   \newtheorem*{definition*}{Definition}
   \newtheorem{remark}[subsection]{Remark}
\title{Smooth Compactifications of the Abel-Jacobi Section}
\author{Sam Molcho \vspace{-3em}}
\begin{document}
\bibliographystyle{amsalpha}
\maketitle
\begin{abstract}
For $\theta$ a small generic universal stability condition of degree $0$ and $A$ a vector of integers adding up to $k(2g-2)$, the spaces $\DMgn^\theta$ constructed in \cite{AP,HMPPS} are observed to lie inside the space $\textbf{Div}$ of \cite{MarcusWise}, and their pullback under $\textbf{Rub} \to \textbf{Div}$ of loc. cit to be smooth. This provides smooth and modular blowups $\RMgn^\theta$ of $\Mgn$ on which the logarithmic double ramification cycle can be calculated by several methods.   
\end{abstract}

\section{Introduction}
The strata of multiscale differentials are the loci 
$$
\{(C,x_1,\cdots,x_n): \omega^k(\sum_{i=1}^n a_ix_i) \cong \mathcal{O}_C\}
$$
in $\mathcal{M}_{g,n}$ for a partition $A = (a_1,\cdots,a_n)$ of integers summing up to $k(2g-2)$. Extensions of these loci to the compactification $\overline{\mathcal{M}}_{g,n}$ have been the subject of a vast literature with different techniques and objectives. In its most algebraic incarnation, such an extension asks for a cycle 
$$
\mathsf{DR}^k_{g,A} \in \mathsf{CH}^g(\overline{\mathcal{M}}_{g,n})
$$
of the expected dimension supported on the ``double ramification locus"
$$
\mathsf{DRL}_{g,A}^k : = \{(C,x_1,\cdots,x_n): \omega^k(\sum a_ix_i) \cong \mathcal{O}_C\} \subset \overline{\mathcal{M}}_{g,n}
$$
of multiscale differentials for the partition $A$. The cycle $\DR$ is called the double ramification cycle, as when $k=0$ it parametrizes functions ramified over two points of $\mathbb{P}^1$, namely zero and infinity. Even the definition of the cycles $\DR$ is subtle; the first rigorous definition was given in \cite{GV} for $k=0$ via the relative Gromov-Witten theory of $\mathbb{P}^1$, and in \cite{Holmes,MarcusWise} in general via Abel-Jacobi theory. Even subtler however is computing the class of $\DR$ in $\mathsf{CH}^g(\Mgn)$; what is meant by computing here is finding an expression of $\DR$ in terms of generators of the tautological ring $\mathsf{R}^*(\Mgn)$. A remarkable such expression, known by now as Pixton's formula, was discovered by Pixton and proven in \cite{JPPZ}.   

Perhaps surprisingly, the developments of \cite{JPPZ} are not the final word to the subject. For instance, if one adopts the Gromov-Witten theory perspective, it is natural to ask for a calculation of the virtual fundamental class for (rubber) relative stable maps to $\mathbb{P}^2$ instead of $\mathbb{P}^1$, or in the multiscale language, for the corresponding classes and calculations of the ``double double" ramification loci 
$$
\{(C,x_1,\cdots,x_n): \omega^k(\sum a_ix_i) \cong \mathcal{O}_C \cong \omega^k(\sum b_ix_i)\}
$$
for two partitions $A,B$. For these problems, the methods of \cite{JPPZ} have not been successfully adapted. To approach them, it has been understood (\cite{HPS},\cite{RanProduct},\cite{HS},\cite{MRan},\cite{Herrproduct}) that one should study these problems in the context of logarithmic geometry. In this context, it is more natural to study instead the \emph{logarithmic} double ramification cycle 
$$
\logDR
$$
This is a certain refinement of $\DR$, but does not live on $\Mgn$ -- or, better, it does not lie in $\mathsf{CH}(\Mgn)$, but rather in the logarithmic Chow ring $\mathsf{logCH}(\Mgn)$ (\cite{BarrottChow},\cite{MPS}). 

We will not define the $\logDR$ here (or the $\DR$ for that matter), but it is possible to explain the relevant aspects of the relationship between $\logDR$ and $\DR$ on general grounds. Let $(X,D)$ be a smooth Deligne-Mumford stack with a normal crossings divisor $D$; the case of primary interest is of course $X = \Mgn, D = \partial \Mgn = \Mgn - \mathcal{M}_{g,n}$. The divisor $D$ then stratifies $X$ into the strata consisting of connected components of intersections $D_1 \cap \cdots D_k$ of various irreducible components of $D$\footnote{With the convention that an irreducible component can repeat if it self intersects.}. A simple blowup is the blowup of $X$ along a smooth stratum closure. Such a blowup $p:X' \to X$ produces a new pair $(X',D' = p^{-1}(D))$. A blowup obtained by iterating this procedure a finite number of times is called an iterated blowup. A \emph{logarithmic blowup} of $(X,D)$ is any blowup $p:X' \to X$ which can be dominated by an iterated blowup of $(X,D)$. Logarithmic blowups form an inverse system, with a map $X'' \to X'$ in the system if the blowup $X'' \to X$ factors through $X' \to X$. In this case we say that $X''$ is finer than $X'$, or a refinement of it. The partial order determined by refinement yields a system of groups $\mathsf{CH}^{\textup{op}}(X')$ indexed by Gysin pullback. Then 
$$
\mathsf{logCH}(X,D) := \varinjlim \mathsf{CH}^{\textup{op}}(X')
$$
where $X' \to X$ ranges through logarithmic blowups of $(X,D)$ \footnote{Alternatively, we can avoid the use of operational Chow rings by restricting attention to $X'$ which are smooth. This gives the same ring as each $X'$ can be dominated by a smooth one.}. The ordinary Chow ring $\mathsf{CH}(X)$ is contained in $\mathsf{logCH}(X)$ as a subring, and there is a retraction(which is not a ring homomorphism) $\mathsf{logCH}(X) \to \mathsf{CH}(X)$ by pushforward. Thus, to say that $\logDR$ is a non-trivial refinement of $\DR$ in $\mathsf{logCH}(\Mgn)$ is to say that $\logDR \notin \mathsf{CH}(\Mgn) \subset \mathsf{logCH}(\Mgn)$ but its pushforward equals $\DR$.   

The ring $\mathsf{logCH}(X,D)$ is, apart from trivial cases, not finitely generated. However, any given element of it is determined by a finite amount of data: for each $x \in \mathsf{logCH}(X,D)$, there exists some log blowup $X' \to X$, and an element $x' \in \mathsf{CH}(X')$ so that $x = x'$ under the natural inclusion $\mathsf{CH}(X') \subset \mathsf{logCH}(X,D)$. Such a pair $(X',x')$ is called a \emph{representative} of $x$ on $X'$. It is however often the case that several such representatives $(X',x')$ exist, with none being preferable: for the sake of concreteness, one could have $(X_i',x_i'),i=1,2$ representing $x$; while by definition there is a representative $(X'',x'')$ dominating both, meaning $p_i:X'' \to X_i'$ is a blowup and $x''=p_i^*(x_i)$, there may be no direct map $X_1 \to X_2$ or vice versa.   

This is the case for $\logDR$. Representatives of it can be found on any blowup $p: \Mgn' \to \Mgn$ which is sufficiently fine, in some sense which we do not make precise here, but which intuitively means that the closure of $\mathsf{DRL}_{g,A}^k$ meets the boundary of $\Mgn'$ sufficiently transversely. The blowups $\Mgn'$ are however neither unique nor canonical, and there is no coarsest or finest blowup supporting a representative. So, in a sense, the ambiguity of choosing a representative is built into the $\logDR$ problem.   

While the ambiguity of representative of a class in $\mathsf{logCH}$ causes few conceptual difficulties, it can cause substantial ones on more practical matters. For instance, if one is interested in writing a formula for the class, several hurdles have to be overcome: for once, one must decide which generating set for the various $\mathsf{CH}^{\textup{op}}(X')$ to use. Fortunately, a good candidate generating set does exist, consisting of the Chow ring $\mathsf{CH}(X)$ and the algebra of boundary strata of the various $X'$, which is captured by combinatorial data: the algebra of piecewise polynomial functions on the tropicalization of $X$ \cite{MPS,MRan,Brion,Payne,FultonSturmfels}. Even so, while this choice of generating set determines the form of the answer, to write down an explicit formula, one generally needs to have precise control over the additional generators adjoined. In practice, this means choosing a representative $(X',x')$ with some sort of special presentation. 

Early approaches to the $\logDR$ focused on properties of the blowup $\Mgn' \to \Mgn$ supporting a representative. The idea here is that, since no best possible choice for $\Mgn'$ exists, one might as well choose one that is fine enough that avoids as many pathologies as possible: choose an $\Mgn'$ that is smooth, whose strata don't self-intersect, and so on. These approaches sufficed to prove soft properties of the $\logDR$, which depend on the form of the class -- it was proven for instance that it is tautological \cite{MRan,HS}. But choosing least pathological models $\Mgn'$ relies on abstract use of resolution of singularities, which makes the problem of  finding an explicit formula essentially impossible. 

In \cite{HMPPS} Pixton's formula was extended to $\logDR$. The strategy adopted there was in the opposite direction: the compactifications $\Mgn'$ constructed were as closely tied with the geometry of the Abel-Jacobi section as possible. The reason to do so was to connect the $\logDR$ with the DR cycle on the universal Picard stack, which had been calculated in \cite{BHPSS} by a (rather elaborate) extension of the methods of \cite{JPPZ}. The end result was, for each ``universal stability condition" $\theta$ \cite{KP}, which through works of \cite{OdaSeshadri,Cap,KP,Pan,Melo} produces a compactified Jacobian $\mathrm{Pic}^{\theta}$, a blowup $\DMgn^\theta \to \Mgn$ which resolves the indeterminacies of the Abel-Jacobi section 
$$
\Mgn \dashrightarrow \mathrm{Pic}^{\theta}
$$
The study of such resolutions was initiated in \cite{AP} via tropical methods, at least in the presence of some mild assumptions on the stability condition; but studying the problem logarithmically allows one to go further, by endowing $\DMgn^\theta$ with an explicit functor of points. In other words, the non-pathological compactifications $\Mgn'$ were traded for \emph{modular} ones. As the functor of points of $\DMgn^\theta$ can be understood completely explicitly when working logarithmically, this was sufficient to compute $\logDR$ on each $\DMgn^\theta$. 

On the other hand, the spaces $\DMgn^\theta$ are typically singular, and the calculation in \cite{HMPPS} expresses $\logDR$ as an operational class. Nevertheless, there are significant advantages to working with a nonsingular space. For instance, in \cite{MRan} the $\logDR$ is approached via strict transforms and Segre classes, and requires as inputs \cite[Theorem 6.7]{FultonInt}, \cite{Aluffi}, which do not work for singular toroidal spaces such as $\DMgn^\theta$. Furthermore, ongoing work of Abreu-Pagani and myself aims to calculate the $\logDR$ by Grothendieck-Riemann-Roch techniques, which require to work with the class in the homological $\mathsf{CH}_*$ instead of the operational theory. For this approach, the singularities of $\DMgn^\theta$ cause difficulties. 

The goal of this paper is to address these difficulties. For each universal stability condition $\theta$, we construct a refinement 
$$
\RMgn^\theta \to \DMgn^\theta
$$
and show 
\begin{theorem}
The stack $\RMgn^\theta$ is smooth. 
\end{theorem}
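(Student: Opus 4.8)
The construction of the paper realises $\RMgn^\theta$ as the fibre product $\DMgn^\theta \times_{\mathbf{Div}} \mathbf{Rub}$, with $\DMgn^\theta \hookrightarrow \mathbf{Div}$ the embedding observed in the abstract, so the plan is to analyse this fibre product through the logarithmic --- equivalently, toric/combinatorial --- geometry of its three ingredients. First I would record the structural inputs from \cite{MarcusWise}: $\mathbf{Div}$ is logarithmically smooth (though not of finite type), hence étale-locally over the stack of prestable curves it is cut out by affine toric charts; and $\mathbf{Rub}\to\mathbf{Div}$ is a logarithmic modification --- étale-locally the toric morphism attached to a subdivision of fans. Since logarithmic modifications are stable under arbitrary base change, $\RMgn^\theta\to\DMgn^\theta$ is again a logarithmic modification. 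Because $\DMgn^\theta$ is a toroidal modification of the toroidal stack $\Mgn$, it is logarithmically regular --- even though it is singular --- and hence so is $\RMgn^\theta$. Therefore the assertion ``$\RMgn^\theta$ is smooth'' is equivalent to a combinatorial one: the subdivision of the cone complex $\trop(\DMgn^\theta)$ induced by pulling back $\mathbf{Rub}\to\mathbf{Div}$ consists of \emph{unimodular} (``nonsingular'') cones; granting this, $\RMgn^\theta$ is étale-locally a toric variety on a unimodular cone, so it is smooth.

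Next I would make the two relevant subdivisions explicit. Over the closure of the stratum of $\Mgn$ indexed by a stable graph $\Gamma$ the moduli cone is $\sigma_\Gamma=\mathbb{R}_{\ge 0}^{E(\Gamma)}$ (edge lengths), which is unimodular. The stability condition $\theta$ assigns to each point of $\sigma_\Gamma$ the combinatorial type of the $\theta$-(semi)stable twist of $\omega^k(\sum a_ix_i)$; the loci of constant type cut $\sigma_\Gamma$ into the ``$\theta$-chambers,'' and $\trop(\DMgn^\theta)$ is precisely this decomposition over all $\Gamma$ --- it is exactly the failure of these chambers to be unimodular that makes $\DMgn^\theta$ singular, cf. \cite{AP,HMPPS}. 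On the other hand $\mathbf{Rub}\to\mathbf{Div}$ records an expansion of the rubber target, equivalently the order structure of the values of the piecewise-linear function $\varphi$ realising the degree-$0$ class, so it subdivides each $\sigma_\Gamma$ along the hyperplanes where two such values coincide. The claim to prove is then: on each $\theta$-chamber, the common refinement with this ``level'' subdivision is unimodular.

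The third step --- which I expect to be the main obstacle --- is to prove this unimodularity. The route I would try is to exhibit the refinement as an iterated \emph{star subdivision at rays each of which spans, together with the minimal cone containing it, a unimodular cone} --- i.e. as an iterated toroidal blowup along smooth strata --- using that a rubber expansion is assembled by successively inserting $\mathbb{P}^1$-bubbles, each insertion being such a star subdivision. Two things must then be checked: that restricting the universal expansion data of $\mathbf{Rub}$ to the $\theta$-locus produces exactly such a sequence of star subdivisions of the \emph{smooth} cones $\sigma_\Gamma$, so that unimodularity is preserved at every step; and that this sequence already refines the $\theta$-chambers, for which smallness and genericity of $\theta$ are used, forcing the $\theta$-stability walls to lie among the level-comparison hyperplanes $\{\varphi(v)=\varphi(w)\}$. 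A maximal cone of the refinement should thereby be identified with the cone attached to a maximal chain of twists compatible with both the $\theta$-stability and the level order, and chain cones are unimodular by the standard mechanism (the relevant incidence matrix is triangular with unit diagonal).

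Alternatively one can avoid the global cone combinatorics and work in charts: \cite{HMPPS} equips $\DMgn^\theta$ with an explicit functor of points and \cite{MarcusWise} does the same for $\mathbf{Rub}\to\mathbf{Div}$, so combining them presents $\RMgn^\theta$ near each point as $(\text{smooth})\times\Spec\mathbb{Z}[P]$ for a monoid $P$ one can read off --- the monoid of $\theta$-stable, level-ordered twists --- and it remains to check that $P$ is free on the bubbles of the expansion. Either way the content is the same: imposing a compatible rubber expansion on top of the $\theta$-stable structure frees up the local monoids, which is the heart of the argument.
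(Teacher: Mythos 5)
Your overall strategy -- reducing smoothness of $\RMgn^\theta$ to a combinatorial statement about cones via log-smoothness of the ingredients -- matches the paper, and your reduction is sound up to one point. But your main route contains a genuine gap, stemming from a misidentification of what the map $\mathbf{Rub}\to\mathbf{Div}$ is.

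You describe $\mathbf{Rub}\to\mathbf{Div}$ as a logarithmic modification, \'etale-locally a subdivision of fans, and accordingly set yourself the goal of showing that the induced subdivision of the cones $\sigma_\Gamma$ is \emph{unimodular}. That goal is false as stated. The subdivision of a $\theta$-chamber by the level hyperplanes $\{\alpha(v)=\alpha(w)\}$ is the cone stack $\Sigma_{\textbf{Ord}}$, and the paper shows that this is only \emph{simplicial} in the ambient lattice $\mathbb{Z}^{E(\Gamma)}$ -- not unimodular. The worked example at the end of the paper computes a cone with lattice index $2$. The passage $\mathbf{Rub}\to\mathbf{Ord}$ is not a subdivision at all but a \emph{root}: a finite-index refinement of the integral structure, corresponding algebraically to a root stack construction. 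Iterated star subdivision at unimodular rays can only produce subdivisions, never roots, so your proposed mechanism, even if it could be carried out, would prove at best simpliciality of $\Sigma_{\textbf{Ord}}$, and $\Mgn\times_{\Mgn^{\textup{trop}}}\Sigma_{\textbf{Ord}}$ is singular in general. The ``smooth subdivision'' you are looking for does not exist.

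The paper's actual argument is both simpler and evades this issue by construction. Instead of regarding the cones as living inside $\mathbb{R}_{\ge 0}^{E(\Gamma)}$, it defines $\sigma_{(\Gamma,s,\Gamma'\to X)}$ inside $\mathbb{R}_{\ge 0}^{E(\Gamma')}\times\mathbb{R}_{\ge 0}^{E(X)}$, cut out by the equations $s(e)\ell_e=\ell_f$. These equations immediately identify the cone, with its intrinsic integral structure, with $\mathbb{N}^{E(X)}\times\mathbb{N}^{E^c(\Gamma)}$: the lengths of non-contracted edges of $\Gamma'$ are determined by the target lengths $\ell_f$, and contracted edge lengths are free. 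The new free coordinates $\ell_f$ are exactly what furnish the finite-index sublattice. Your ``alternative'' sketch in the final paragraph -- that the local monoid $P$ should be free on the bubbles of the expansion together with the contracted edges -- is in substance the correct argument and is what the paper proves; but it is inconsistent with your main claim that a subdivision of $\trop(\DMgn^\theta)$ suffices, and you would need to make explicit that the generators of $P$ include the target edge lengths, which live in a larger lattice than $\mathbb{Z}^{E(\Gamma)}$.
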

Furthermore, the refinement $\RMgn^\theta$ is modular:
\begin{theorem}
Let $S$ be a logarithmic scheme, and $S \to \Mgn$ be a logarithmic map, corresponding to a family of curves $C \to S$. Lifts of $S \to \RMgn^\theta$ correspond to pairs $(C' \to C,\alpha)$ consisting of 
\begin{itemize}
    \item A destabilization $C' \to C$,
    \item An equidimensional piecewise linear function $\alpha$ on $C'$ which twists $\omega^k(\sum a_ix_i)$ to a $\theta$-stable line bundle on $C'$. 
\end{itemize}
\end{theorem}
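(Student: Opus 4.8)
The plan is to read off the functor of points of $\RMgn^\theta$ from the fiber product that presents it. By the construction recalled in the abstract, $\RMgn^\theta = \DMgn^\theta \times_{\mathbf{Div}} \mathbf{Rub}$, so a lift of a logarithmic map $S \to \Mgn$ with associated curve $C \to S$ to $\RMgn^\theta$ is the same as a lift to $\DMgn^\theta$ together with a lift to $\mathbf{Rub}$ inducing the same point of $\mathbf{Div}$. I would begin by recalling the three functors of points. By \cite{MarcusWise}, a lift of $S \to \Mgn$ to $\mathbf{Div}$ is a logarithmic divisor on $C$, equivalently a piecewise linear function $\alpha$ on $\trop(C/S)$, whose breakpoints cut out a subdivision and hence a destabilization $C' \to C$; a lift to $\mathbf{Rub}$ remembers in addition this $C' \to C$ and imposes that $\alpha$ be equidimensional on $C'$ in the sense of loc. cit. (the induced map of $C'$ to the universal rubber target contracts no component), everything taken modulo the rubber translation. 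On the other side, the embedding $\DMgn^\theta \hookrightarrow \mathbf{Div}$ of \cite{HMPPS} (following \cite{AP}) identifies a lift of $S \to \Mgn$ to $\DMgn^\theta$ with a piecewise linear function whose associated twist of $\omega^k(\sum_{i=1}^n a_i x_i)$ is fiberwise $\theta$-stable on $C$.

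With these in hand, the second step is a repackaging of the fiber product. A point of it consists of a piecewise linear function $\alpha$ carrying the $\theta$-stability constraint from the $\DMgn^\theta$ factor, together with, from the $\mathbf{Rub}$ factor, the destabilization $C' \to C$ that $\alpha$ determines and the requirement that $\alpha$ be equidimensional there, taken modulo the rubber translation. Adding a locally constant function to $\alpha$ alters neither its slopes nor the twist it induces, so the rubber translation is harmless; and since $C' \to C$ is read off from $\alpha$, this data amounts exactly to a pair $(C' \to C, \alpha)$ with $\alpha$ an equidimensional piecewise linear function on $C'$. The only thing left to reconcile is the $\theta$-stability condition: the twist of $\omega^k(\sum_i a_i x_i)$ by $\alpha$ is originally $\theta$-stable as a line bundle on $C$, whereas the assertion is that it be $\theta$-stable on $C'$ (using that $\theta$, being a \emph{universal} stability condition, induces a stability condition on the destabilization; note $\omega_{C'}^k(\sum_i a_i x_i) = \nu^\ast\,\omega_C^k(\sum_i a_i x_i)$ since $\nu\colon C' \to C$ is semistable).

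This last comparison is where I expect the real work to lie. It should reduce to a local analysis along each chain of rational components contracted by $\nu$: equidimensionality forces the multidegree of the twist along such a chain into the unique distribution whose value on each internal component is fixed by the (nonzero) slopes of $\alpha$, and one must check --- from the behavior of a universal stability condition under destabilization as set up in \cite{KP,HMPPS}, together with the fact that $\Pic^\theta$ is insensitive to semistable modification --- that this multidegree is $\theta$-stable on $C'$ precisely when the induced multidegree on $C$ is $\theta$-stable. Since $\theta$ is small and generic, $\theta$-stability equals semistability and is an open, stratawise condition, so this can be verified fiberwise and combinatorially. Once the underlying lifts are matched, promoting the bijection to an equivalence of groupoids is routine: the line bundles on $C$ and on $C'$ cut out by $\alpha$ correspond under $\nu^\ast$, and isomorphisms of the two packages of data correspond directly.
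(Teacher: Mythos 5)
Your strategy---read off the functor of points from the presentation $\RMgn^\theta = \DMgn^\theta \times_{\mathbf{Div}} \mathbf{Rub}$ and repackage---matches the paper's approach in substance: the paper realizes $\RMgn^\theta$ as $\mathbf{Rub}_{g,A}^\theta$, the preimage in $\mathbf{Rub}$ of $\mathbf{Div}_{g,A}^\theta=\DMgn^\theta$, and derives the theorem by unwinding the explicit functors of points established tropically in Sections 4 and 6. But your identification of the $\DMgn^\theta$ factor is off, and the error propagates through the rest of the argument.

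A lift of $S\to\Mgn$ to $\DMgn^\theta$ is \emph{not} ``a piecewise linear function whose twist is $\theta$-stable on $C$.'' Following \cite{KP,HMPPS}, it is a \emph{quasi-stable} log curve $C_q$ destabilizing $C$, together with a piecewise linear $\alpha$ on $C_q$, such that the \emph{admissible} tropical divisor $D = A - \mathsf{div}(\alpha)$ on $C_q$---which is required to have value $1$, not $0$, on every exceptional vertex of $C_q$---is $\theta$-(semi)stable, with $\theta$ lifted by $0$ to exceptional vertices. The twisted bundle $\omega^k(\sum a_ix_i)\otimes\mathcal O(\alpha)$ is a line bundle on $C_q$, not on $C$; quasi-stable models are introduced precisely to control how the multidegree jumps when $\alpha$ has nonzero divisor. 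Because of this, the ``real work'' step you outline---comparing $\theta$-stability on $C$ versus $C'$ via local analysis along contracted chains and an appeal to $\textup{Pic}^\theta$ being insensitive to semistable modification---is aimed at the wrong comparison and is not actually required. What is needed is the paper's observation (the remark following the definition of $\Sigma_{\mathbf{Rub}_{g,A}^\theta}$ in Section 4) that the quasi-stable model $C_q$, the admissible divisor $D$, the ordering of the $\alpha$-values, and the tropical target $X$ are all recoverable from the pair $(C'\to C,\alpha)$: the vertices of $C_q$ are exactly the exceptional vertices of $C'$ on which $D=1$, and $X$ is read off from the level sets of $\alpha$. With that in hand, the theorem is a direct re-reading of the functor of points of $\mathbf{Rub}_{g,A}^\theta$ and no stability-comparison lemma is necessary. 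As a minor point, the worry about quotienting by ``rubber translation'' is also moot here: the paper rigidifies $\mathbf{Div}$, $\mathbf{Ord}$, $\mathbf{Rub}$ by requiring $\alpha=0$ on the component carrying the first marking (using the standing hypothesis $n\ge 1$), so there is no residual translation action to account for.
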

The notion of equidimensional piecewise linear function and twisting is explained in section \ref{sec: tropical moduli}. Stability here is a minimality condition, also explained in \ref{sec: tropical moduli}, which ensures, among other things, finiteness of automorphisms groups. In particular, the strata of $\RMgn^\theta$ are entirely explicit, and correspond to certain combinatorial/linear algebraic data, which we call \emph{$\theta$-stable equidimensional flows}. These are defined in \ref{def: thetastableeqflow}. 

In other words, the space $\RMgn^\theta$ is a ``dream compactification" of the double ramification problem: non-singular and modular. In particular, $\RMgn^\theta$ carries a universal family $\widetilde{C}_{g,A}^\theta \to \RMgn^\theta$, and a universal line bundle $\mathcal{L}$ on $\widetilde{C}_{g,A}^\theta$ -- i.e. an Abel-Jacobi section 
\[
\RMgn^\theta \to \mathrm{Pic}^\theta
\]
The line bundle $\mathcal{L}$ is simply the pullback of the universal line bundle on $\mathrm{Pic}^\theta$. The universal family $\widetilde{C}_{g,A}^\theta$ is not the pullback, but rather a blowup of the pullback of the universal family of $\mathrm{Pic}^\theta$, which is also better behaved: Recall that a scheme is called quasi-smooth if every divisor is $\mathbb{Q}$-Cartier. We have 
\begin{theorem}
The universal curve $\widetilde{C}_{g,A}^\theta$ is quasi-smooth. 
\end{theorem}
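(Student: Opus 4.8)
The plan is to reduce quasi-smoothness of $\widetilde{C}_{g,A}^\theta$ to a local statement at the nodes of the universal curve, and then to read that statement off from the modular description of $\RMgn^\theta$. First, since $\RMgn^\theta$ is smooth by the first Theorem, its boundary divisor is normal crossings and the pair is log smooth over the ground field; and by the second Theorem the universal curve $\widetilde{C}_{g,A}^\theta \to \RMgn^\theta$ is the universal destabilization $C'$, a family of prestable curves, which with its natural log structure is a log smooth curve over $\RMgn^\theta$. Composing, $\widetilde{C}_{g,A}^\theta$ is log smooth over $\Spec k$; in particular it is normal, is smooth away from the nodes of the fibers of $\widetilde{C}_{g,A}^\theta \to \RMgn^\theta$, and \'etale-locally around a node it is cut out by a single equation $xy = f$ with $f$ a monomial in a regular system of parameters $t_1,\dots,t_N$ of $\RMgn^\theta$ (by the local structure theory of log smooth curves over a log smooth base). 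Now the local ring of $\{xy = f\}\subset \mathbb{A}^2\times\RMgn^\theta$ at such a point is $\mathbb{Q}$-factorial if and only if $f$ is, up to a unit, a power of a single one of the parameters $t_i$: if $f = t_i^{b}$ the singularity is $\mathbb{A}^2/\mu_b \times \mathbb{A}^{N-1}$, a quotient singularity and hence $\mathbb{Q}$-factorial, whereas if $f$ involves two or more of the $t_i$ one gets a singularity of conifold type, which is not $\mathbb{Q}$-factorial. Since $\widetilde{C}_{g,A}^\theta$ is a Deligne--Mumford stack and $\mathbb{Q}$-factoriality is \'etale-local, the Theorem is equivalent to the assertion that every node of the universal curve $C'$ has smoothing parameter a power of a single boundary coordinate.

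Second, I would verify this assertion by unwinding the construction of $\RMgn^\theta$. Fix a maximal cone $\tau$ of the cone complex of $\RMgn^\theta$; by the first Theorem $\tau$ is unimodular, so it has coordinates $u_1,\dots,u_N$ forming a $\mathbb{Z}$-basis of its lattice, corresponding via the construction to a $\theta$-stable equidimensional flow in the sense of \ref{def: thetastableeqflow}. An edge $e$ of the stable curve classified by a generic point of $\tau$ has length a non-negative integer combination $\ell_e = \sum_i a_{e,i}\, u_i$, being the restriction to $\tau$ of one of the edge-length coordinates on the cone of $\Mgn$ below $\tau$. The destabilization $C'\to C$ of the second Theorem breaks $e$ into the chain of edges whose lengths are the nonzero summands $a_{e,i}\, u_i$: this is precisely the staircase subdivision of the segment $[0,\ell_e]$ recorded by the equidimensional piecewise linear function $\alpha$, the subdivision points being the places where $\alpha$ changes slope. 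Hence each edge of $C'$ has length a monomial $a_{e,i} u_i$, and each node of $C'$ has smoothing parameter a unit times $t_i^{a_{e,i}}$, a power of a single coordinate. The equidimensionality and $\theta$-stability conditions of \ref{sec: tropical moduli} are exactly what guarantee that this subdivision occurs and is complete, i.e. that no coarser subdivision survives on $\RMgn^\theta$. This establishes quasi-smoothness. The same computation shows $\widetilde{C}_{g,A}^\theta$ is in general only quasi-smooth and not smooth: an edge of $C'$ along which $\alpha$ has a slope jump $a_{e,i}>1$ produces the cone $\langle (1,0),(1,a_{e,i})\rangle$, which is simplicial but not unimodular, hence a genuine (though $\mathbb{Q}$-factorial) quotient singularity.

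The main obstacle is the bookkeeping in the second step: one must check that passing from $\DMgn^\theta$ to $\RMgn^\theta$ --- equivalently, pulling back along $\textbf{Rub}\to\textbf{Div}$ --- subdivides each edge cone $\{0\le t\le \ell_e\}$ above the cone complex of $\Mgn$ into exactly the monomial pieces dictated by $\alpha$, and that the lengths of those pieces are the unimodular coordinates of the cones of $\RMgn^\theta$. Concretely this amounts to matching the coordinates of a smooth cone $\tau$ with the slope-constant segments of $\alpha$ along each edge of $C'$; once the dictionary between cones of $\RMgn^\theta$ and $\theta$-stable equidimensional flows (\ref{def: thetastableeqflow}) is in place, together with the smoothness already proven, the verification is direct. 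Equivalently, and perhaps more cleanly, one may phrase the whole argument in terms of cone complexes: $\widetilde{C}_{g,A}^\theta$ being toroidal, quasi-smoothness is the statement that its cone complex is simplicial, and the content is that the cones of the universal curve over $\tau$, namely $\{(u,t): u\in\tau,\ 0\le t\le \lambda_{e'}(u)\}$ for $e'$ an edge of the destabilized graph, are simplicial --- which holds exactly because each $\lambda_{e'}$ is monomial on $\tau$.
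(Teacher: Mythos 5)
Your proposal is correct in substance and reaches the theorem by a more direct computation than the paper does, so a comparison is in order.

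The paper's proof of quasi-smoothness is concentrated in Lemma~\ref{lem:univfamsimplicial}, which asserts that the universal family of $\Sigma_{\mathbf{Rub}}$ is simplicial, and derives this structurally: a section of the universal curve over a cone of $\Sigma_{\mathbf{Rub}_n}$ lands in a specific cell of the destabilized graph $\Gamma''$, hence its $\alpha$-value is pinned to a specific cell of the tropical line $X$, hence the total order lifts canonically; so the universal curve factors through $\Sigma_{\mathbf{Ord}_{n+1}}$, which is already known to be simplicial (and passing from $\mathbf{Ord}$ to $\mathbf{Rub}$ only changes lattices, not underlying rational cones). Quasi-smoothness of the toroidal stack $\widetilde{C}_{g,A}^\theta$ is then the standard translation of simpliciality. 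You instead make this explicit at the level of coordinates: you identify quasi-smoothness of $xy = f$ with $f$ being a power of a single parameter, equivalently with the cones over each edge of $\Gamma''$ being simplicial, and you verify this by showing each edge length $\ell_{e'}$ is a positive multiple of a single unimodular coordinate of the smooth cone $\tau$ (one coordinate per edge of $X$, one per contracted edge). Both routes hinge on exactly the same tropical fact --- that the equidimensional structure forces monomial edge lengths on the destabilization --- so yours is best viewed as unpacking the concrete content of the paper's Lemma; what you buy is an explicit description of the singularity types of the universal curve ($A_{b-1}\times\mathbb{A}^{N-1}$ at each node), while what the paper's route buys is economy, as it piggybacks on the already-established simpliciality of $\Sigma_{\mathbf{Ord}}$ without new computation.

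Two small inaccuracies worth fixing. First, the subdivision points of an edge $e$ under $C'\to C$ are not ``the places where $\alpha$ changes slope.'' They are of two kinds: the (at most one) exceptional vertex of the quasi-stable model, where the slope does jump by one, because $\mathsf{ord}_v(s)=-1$ there; and the preimages of vertices of $X$ forced by the stability condition $(\star)$, where the slope does not change at all. What actually guarantees that the decomposition $\ell_e = \sum_i a_{e,i} u_i$ matches the pieces of $C'$ is monotonicity of $\alpha$ along $e$ (forced by $\mathsf{ord}_v(s)=-1$ at the exceptional vertex), so that the pieces of $e$ biject with the consecutive edges of $X$ traversed. Second, $\theta$-stability plays no role in this theorem: the paper proves the stronger statement that the universal curve of $\mathbf{Rub}$ itself is quasi-smooth, and $\RMgn^\theta$ inherits this as an open substack; the relevant stability in your step two is the minimality condition $(\star)$ on the equidimensional lift, not $\theta$-stability of the divisor $D$. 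Neither point affects the validity of the argument.
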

From the perspective of the semistable reduction theorems, this result is surprising. The semistable reduction theorem ensures that given a family of curves $C \to S$, we can find blowups $S' \to S$ and $C' \to C \times_S S'$ which are smooth; however, the blowup required on the base $S$ depends on the family $C \to S$, which makes constructing the semistable family $C' \to S'$ explicitly very difficult in practice. 

The connection with the double ramification cycle is as follows: when $\theta$ is sufficiently close to the $0$ stability condition, $\RMgn^\theta$ supports a representative of $\logDR$, and the methods of \cite{HMPPS} also apply: 

\begin{corollary}
The universal $\mathsf{DR}$ formula for $\mathcal{L}$ computes the $\logDR$ on $\RMgn^\theta$. 
\end{corollary}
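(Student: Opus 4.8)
The plan is to deduce the corollary from the main results of \cite{HMPPS} together with the functoriality built into the construction of $\RMgn^\theta$. Recall the strategy of \cite{HMPPS}: for $\theta$ small generic of degree $0$, the resolved Abel--Jacobi section $\DMgn^\theta \to \mathrm{Pic}^\theta$ is used to pull back the universal $\mathsf{DR}$ class on the universal compactified Jacobian, whose tautological expression is the formula of \cite{BHPSS} (itself an extension of \cite{JPPZ}); the resulting operational class on $\DMgn^\theta$ is proved there to be a representative of $\logDR$ in $\mathsf{logCH}(\Mgn)$. So the first step is just to observe that $\RMgn^\theta$ is a logarithmic blowup of $\Mgn$ — it refines $\DMgn^\theta$, which refines $\Mgn$ — and hence is a legitimate model on which to seek a representative of $\logDR$; refinements of a model fine enough to carry a representative are themselves fine enough, so $\RMgn^\theta$ supports one.

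The second step identifies the representative. By construction the Abel--Jacobi section $\RMgn^\theta \to \mathrm{Pic}^\theta$ factors as $\RMgn^\theta \to \DMgn^\theta \to \mathrm{Pic}^\theta$, because the universal bundle $\mathcal{L}$ on $\widetilde{C}_{g,A}^\theta$ is the pullback of the universal bundle on $\mathrm{Pic}^\theta$. Consequently the universal $\mathsf{DR}$ formula evaluated for $\mathcal{L}$ on $\RMgn^\theta$ is the pullback, along the refinement $p\colon \RMgn^\theta \to \DMgn^\theta$, of the class computed in \cite{HMPPS}. The point to check is that the tautological ingredients of the formula — powers of the relative dualizing sheaf, the marked points, the boundary divisors, and the fibre integrals producing the Pixton-type class — are compatible with $p$ and with the comparison between $\widetilde{C}_{g,A}^\theta$ and the pullback of the universal curve of $\mathrm{Pic}^\theta$. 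Since $\widetilde{C}_{g,A}^\theta$ is a blowup of that pullback along loci over the boundary, and $\mathcal{L}$ together with the tautological classes are pulled back along it, the projection formula shows the fibre integrals agree, so the two classes match under $\mathsf{CH}^{\textup{op}}(\DMgn^\theta) \to \mathsf{CH}^{*}(\RMgn^\theta)$ (the target being ordinary Chow since $\RMgn^\theta$ is smooth).

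The third step is formal: in $\mathsf{logCH}(\Mgn) = \varinjlim \mathsf{CH}^{\textup{op}}(\Mgn')$ the transition map attached to the refinement $\RMgn^\theta \to \DMgn^\theta$ is precisely this operational Gysin pullback, so it carries a representative of a class to a representative of the same class. Chaining the three steps, the class output by the universal $\mathsf{DR}$ formula for $\mathcal{L}$ on $\RMgn^\theta$ is a representative of $\logDR$, as claimed; one also notes that the hypothesis ``$\theta$ sufficiently close to $0$'' is exactly the smallness assumption under which \cite{HMPPS} applies and it is inherited unchanged by $\RMgn^\theta$. I expect the only genuine work to lie in the second step — checking that the \cite{BHPSS} formula, written in terms of the universal curve of $\mathrm{Pic}^\theta$, is insensitive to replacing that curve by the quasi-smooth blowup $\widetilde{C}_{g,A}^\theta$; this is where quasi-smoothness of $\widetilde{C}_{g,A}^\theta$ and its explicit modular description are used, to guarantee that the Chow-theoretic operations along the blown-up curve (Gysin pullbacks, intersections with exceptional divisors) behave as in the smooth case. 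The rest is unwinding definitions.
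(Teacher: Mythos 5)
Your argument matches the paper's (very brief) justification: after Theorem \ref{theorem: propertiesofrubtheta} the paper simply notes that $\mathcal{L}$ gives an Abel--Jacobi section $\RMgn^\theta \to \mathrm{Pic}^\theta$, and that the universal DR formula of \cite{BHPSS} applies ``as in \cite[Theorem A]{HMPPS}''; your three steps spell this out correctly. Two small caveats worth noting: $\RMgn^\theta \to \Mgn$ is a log modification \emph{followed by a root}, not a log blowup in the strict sense used to define $\mathsf{logCH}(\Mgn)$, so it is its coarse model (the $\mathbf{Ord}$ version) that literally appears in the colimit --- this is harmless over $\mathbb{Q}$ since the two have the same Chow groups; and because the universal DR formula is a tautological class on $\mathrm{Pic}^\theta$ that gets pulled back along the section (which factors through $\DMgn^\theta$ by construction), the comparison in your step two is purely formal --- quasi-smoothness of $\widetilde{C}_{g,A}^\theta$ plays no role in this corollary, contrary to what your last paragraph anticipates.
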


The smoothness of the spaces $\RMgn^\theta$ however allows also the use of alternative methods of calculation. This was in fact the main driver in writing the paper. Our motivations, in fact, ranked in order of confidence, can be listed as 

\begin{itemize}
\item Find a desingularization of $\DMgn^\theta$ in which one can calculate $\log \mathsf{DR}_{g,A}^k$ via traditional algebro-geometric techniques which avoid Gromov-Witten theory and localization.  
\item Highlight the following phenomenon: $\RMgn^\theta$ is constructed by combining two far away ideas. Stability conditions from the universal Jacobian provide a compact space $\DMgn^\theta$. Techniques from stable maps then provide a desingularization relative to $\DMgn^\theta$. We expect this phenomenon to be present in several moduli problems. 
\item Optimize the computer calculations of $\log \mathsf{DR}_{g,A}^k$\footnote{This possibility was suggested to me by Aaron Pixton.}. Currently, the software deals with the singularities of $\Mgn^\theta$ by desingularizing using a general desingularization algorithm for cones. 
\end{itemize}

The construction of the spaces $\RMgn^\theta$ itself is in fact very simple. In the brilliant work of Marcus and Wise \cite{MarcusWise}, a modification  
$$
\mathbf{Div}_{g,A} \to \Mgn
$$
is constructed. The modification is not of finite type, and highly non-separated, but is the universal modification which resolves the Abel-Jacobi section to the universal Picard stack $\mathbf{Pic}_{g,n}$. Along with it, a further modification 
$$
\mathbf{Rub}_{g,A} \to \mathbf{Div}_{g,A}
$$
is given, which is, up to orbifold corrections a log blowup as above. We briefly review these constructions in section \ref{sec: algebraicmoduli}. The motivation of \cite{MarcusWise} for these constructions is, in the case $k=0$, to compare the double ramification locus 
$$
\mathsf{DRL}_{g,A}^k = \mathbf{Div}_{g,A} \times_{\mathbf{Pic}} \Mgn
$$
with the space of relative rubber maps to $\mathbb{P}^1$, which is identified as 
$$
\mathbf{Rub}_{g,A} \times_{\mathbf{Div}_{g,A}} \mathsf{DRL}_{g,A}^k.
$$
Our observation is simply that the spaces $\DMgn^\theta$ constructed in \cite{HMPPS} are open substacks of $\mathbf{Div}_{g,A}$, and that $\mathbf{Rub}_{g,A}$ is smooth. Combining the two properties gives the spaces $\RMgn^\theta$ as 
$$
\mathbf{Rub}_{g,A} \times_{\mathbf{Div}_{g,A}} \DMgn^\theta.
$$
Most of the work in the paper is translating what the functor of points of this fiber product is (a subtlety is that the fiber product is taken in the category of logarithmic schemes). The main technical tool that allows us to carry out this translation is the determination of the \emph{tropicalization} of $\mathbf{Div}_{g,A}$ and the relevant auxiliary spaces. The tropicalization of $\mathbf{Div}_{g,A}$ is analogous to the tropicalization of the universal Jacobian, as discussed in \cite{MMUV}, and is perhaps of independent interest to tropical geometers. The analogy can in fact be made precise, by recognizing $\mathbf{Div}_{g,A}$ as the pullback of the universal Picard stack to $\Mgn$ via an Abel-Jacobi section, but we do not explore this direction here. 

The presentation in this paper is terse. While we review all the relevant notions that we will use, we do not attempt to give an adequate exposition of them. The reader is meant to have some familiarity with the construction of $\overline{\mathcal{M}}_{g,n}^\theta$ from \cite{HMPPS}, and an idea of the contents of \cite{MarcusWise}.

\subsection*{Acknowledgments} I'm grateful to David Holmes, Nicola Pagani, Rahul Pandharipande, Aaron Pixton, and Johannes Schmitt for several conversations related to double ramification cycles, the spaces $\DMgn^\theta$, and their desingularizations. I'd also like to thank Paolo Aluffi for pointing out to me a broken reference in a previous version of the paper, and explaining to me some points of his work. Special thanks are due to Dhruv Ranganathan and Alex Abreu. Dhruv listened to various versions of the results presented here, from the early to the late stages; our discussions benefited both content and exposition considerably. Alex Abreu's questions on the construction of $\RMgn^\theta$ were numerous and remarkably poignant. They helped me understand several points about the structure of $\RMgn^\theta$ that I had missed, especially regarding the universal family. I'd like to extend my gratitude to both. 

\noindent I was supported by the grant ERC-2017-AdG-786580-MACI.

\section{Stable Twists}
\subsection{Combinatorial Types}
We begin with a short review of some essential notions from tropical geometry. The notions are well-known, but we include them to avoid excessive referencing and to fix notation. Let $\Gamma$ be a graph. We write $V(\Gamma)$ for the set of vertices of $\Gamma$ and $E(\Gamma)$ for its edges. 

\begin{definition}
A \emph{divisor} on $\Gamma$ is a formal $\mathbb{Z}$-linear combination of vertices of $\Gamma$. The group of divisors on $\Gamma$ is denoted by $\mathsf{Div}(\Gamma)$.
\end{definition}
Of course, $\mathsf{Div}(\Gamma)$ is nothing but the free abelian group $\mathbb{Z}^{V(\Gamma)}$ on $V(\Gamma)$, but we insist on the notation for clarity. In situations where other types of divisors are present we sometimes superfluously call divisors on a tropical curve $\Gamma$ tropical divisors.

Let $\mathcal{E}(\Gamma)$ be the set of oriented edges of $\Gamma$. In particular, there is a two-to-one cover $\mathcal{E}(\Gamma) \to E(\Gamma)$, and every choice of orientation $\vec{\Gamma}$ on $\Gamma$ gives a section $E(\Gamma) \to \mathcal{E}(\Gamma)$, whose image we denote by $E(\vec{\Gamma})$. 

\begin{definition}
A flow on $\Gamma$ is a function 
$$
s:\mathcal{E}(\Gamma) \to \mathbb{Z}
$$
satisfying $s(\vec{e}) = - s(\cev e)$. We write $\mathsf{Flow}(\Gamma)$ for the group of flows on $\Gamma$.
\end{definition}
Of course, as above, $\mathsf{Flow}(\Gamma)$ can be identified with $\mathbb{Z}^{E(\vec{\Gamma})}$ after choosing an orientation on $\Gamma$. 

Any flow on $s$ on $\Gamma$ determines a divisor $\mathsf{div}(s)$ on $\Gamma$, by setting 
$$
\mathsf{ord}_v(s) = \sum_{v \in \cev{e}} s(\cev{e})
$$ 
and 
$$
\mathsf{div}(s) = \sum_{v \in V(\Gamma)} (\mathsf{ord}_v(s))v
$$
where by $v \in \cev{e}$ we mean that $e$ is an edge that contains $v$, oriented towards $v$. This procedure gives a homomorphism 
$$
\mathsf{div}:\mathsf{Flow}(\Gamma) \to \mathsf{Div}(\Gamma).
$$

Let $\Gamma' \to \Gamma$ be a subdivision. The additional vertices on $\Gamma'$ are called \emph{exceptional}. A refinement of $\Gamma'$ is a further subdivision $\Gamma'' \to \Gamma'$. 

Suppose $\Gamma' \to \Gamma$ is a subdivision, and $s$ is a flow on $\Gamma'$. It is often desirable to find the minimal subdivision of $\Gamma$ on which $s$ can be defined. 

\begin{definition}
We say that $\Gamma'$ is minimal with respect to $s$ if 

$$
\mathsf{ord}_v(s) \neq 0
$$
on all exceptional vertices $v$ of $\Gamma'$. 
\end{definition}

\begin{lemma}
Suppose $\Gamma'$ is a subdivision of $\Gamma$, and $s$ is a flow on $\Gamma'$. There is a unique minimal subdivision $\Gamma_s \to \Gamma$ on which $s$ can be defined. 
\end{lemma}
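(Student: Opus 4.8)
The plan is to build $\Gamma_s$ by hand, by contracting exactly those exceptional vertices of $\Gamma'$ at which $s$ has vanishing order, and then to argue that this choice is forced. The one structural fact I would isolate first is that every exceptional vertex $v$ of a subdivision $\Gamma' \to \Gamma$ has valence two in $\Gamma'$, since it is an interior point of the path subdividing some edge of $\Gamma$; this is what makes ``un-subdividing'' at $v$ make sense, and it is the only feature of $\Gamma'$ that the argument uses.

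Concretely, I would let $W \subseteq V(\Gamma')$ be the set of exceptional vertices $v$ with $\mathsf{ord}_v(s) = 0$, and define $\Gamma_s \to \Gamma$ to be the subdivision obtained from $\Gamma' \to \Gamma$ by deleting each $v \in W$ and merging its two incident edges into a single edge. Since removing a valence-two vertex from a path yields a path, $\Gamma_s$ is again a subdivision of $\Gamma$, refined by $\Gamma'$, whose exceptional vertices are precisely those of $\Gamma'$ not lying in $W$; note also that the local merges at distinct vertices of $W$ commute, so $\Gamma_s$ does not depend on the order in which they are performed. To see that $s$ is defined on $\Gamma_s$: at $v \in W$, writing $e_1, e_2$ for the two edges at $v$ oriented towards $v$, the identity $\mathsf{ord}_v(s) = s(e_1) + s(e_2) = 0$ says exactly that $s$ carries a single well-defined value along the edge merged at $v$ (in the direction passing through $v$); iterating over $W$ produces a flow $\bar s$ on $\Gamma_s$ with pullback $s$. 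Finally $\Gamma_s$ is minimal with respect to $s$: a surviving exceptional vertex $w \notin W$ still has valence two, and its two incident oriented edges in $\Gamma_s$ are merely lengthened copies of those in $\Gamma'$, carrying the same $s$-values near $w$, so $\mathsf{ord}_w(\bar s) = \mathsf{ord}_w(s) \neq 0$.

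For uniqueness, I would take any subdivision $\Gamma'' \to \Gamma$ which is refined by $\Gamma'$ and on which $s$ is defined, and observe that $\Gamma''$ is obtained from $\Gamma'$ by deleting some set $U$ of exceptional vertices. The fact that $s$ descends to $\Gamma''$ forces $\mathsf{ord}_v(s) = 0$ for each $v \in U$, i.e. $U \subseteq W$; and, exactly as above, the order of $s$ at each exceptional vertex of $\Gamma'$ not in $U$ is unchanged when computed on $\Gamma''$, so if $\Gamma''$ is also minimal then every such vertex has nonzero order, giving $W \subseteq U$. Hence $U = W$ and $\Gamma'' = \Gamma_s$.

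There is no serious obstacle here; the content is entirely in the valence-two structure of exceptional vertices and the translation of ``$\mathsf{ord}_v(s) = 0$'' into ``flow passes through $v$''. The only point that warrants care is the bookkeeping in the two places where I claim that contracting zero-order exceptional vertices leaves the orders at the remaining vertices untouched, and that the contraction is independent of the order of operations — this is routine but should be stated explicitly, since it is what simultaneously gives existence, minimality, and uniqueness.
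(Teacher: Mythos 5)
Your construction of $\Gamma_s$ (delete the exceptional vertices with $\mathsf{ord}_v(s)=0$) and your argument that any coarser subdivision supporting $s$ must retain the nonzero-order vertices are exactly what the paper does, just with the valence-two bookkeeping spelled out more carefully. Correct, and essentially the same approach.
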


\begin{proof}
Define $\Gamma_s$ as the subdivision of $\Gamma$ obtained by keeping only the exceptional vertices of $\Gamma'$ on which 
$$
\mathsf{ord}_v(s) \neq 0.
$$
Since the slope of $s$ changes on the vertices $v$, any subdivision that supports $s$ must refine $\Gamma_s$, whereby the uniqueness of $\Gamma_s$ follows. 
\end{proof}

\subsection{Equidimensionality}  
Let $\Gamma$ be a graph, and $s$ a flow on $\Gamma$. Then $s$ defines a partial orientation on $\Gamma$, by orienting the edges so that $s(\vec{e}) >0$. The orientation is partial, as it is not defined on edges with $s(e) = 0$. We call such edges \emph{contracted}. The flow $s$ defines an honest orientation on the graph $\overline{\Gamma}$ obtained from $\Gamma$ by contracting the contracted edges.

\begin{definition}
A flow $s$ is called acyclic if the graph $\overline{\Gamma}$ has no oriented cycles for the orientation induced by $s$. 
\end{definition}

An acyclic flow $s$ defines a partial order on the vertices of $\Gamma$: the order is generated by the relation 
$$
v < w
$$
if $v,w$ are the endpoints of an oriented edge $\vec{e}$ from $v$ to $w$ in the orientation determined by $s$. Endpoints of contracted edges are not comparable to one another. 

\begin{definition}
Let $\Gamma$ be a graph. An ordering on $\Gamma$ is an acyclic flow $s$, together with the the choice of a total order among the vertices of the non-contracted edges of $\Gamma$, extending the partial order determined by $s$. We say the ordering extends or is subordinate to $s$. 
\end{definition}

The flow $s$ lifts to any subdivision $\Gamma' \to \Gamma$, and an ordering extending $s$ on $\Gamma$ lifts to an ordering extending $s$ on $\Gamma'$ uniquely. We can rephrase the notion of ordering in the following convoluted way, which nevertheless will be meaningful in the next section:
 
\begin{definition}
A one-dimensional combinatorial target, or combinatorial line, is a graph $X$ consisting of $n$-ordered vertices $v_1,\cdots,v_n$, with $v_{i+1}$ joined to $v_i$ by a single edge $e_i$, along with two legs, one on $v_1$ and one on $v_n$. 
\end{definition}

We can then consider maps $\Gamma \to X$. Morphisms for us take cells into cells, i.e. vertices or edges into vertices or edges. A morphism $\phi: \Gamma \to X$ also defines a partial order on $V(\Gamma)$, by declaring $v<w$ if $\phi(v) = v_i$, $\phi(w) = v_{j}$, with $i < j$. Vertices that map to the same vertex of $X$ are incomparable with one another, and we do not define the order on vertices that map into edges of $X$. The following class of morphisms is then special:   

\begin{definition}
A map $\Gamma \to X$ is equidimensional if it takes vertices to vertices. 
\end{definition}

Thus, an equidimensional morphism defines a total order on the vertices of non-contracted edges of $\Gamma$. In fact, an equidimensional map $\Gamma' \to X$ for any subdivision $\Gamma'$ to $\Gamma$ defines a total order on the vertices of non-contracted edges of $\Gamma$. 

\begin{definition}
Let $\Gamma$ be a graph, $s$ an acyclic flow on $\Gamma$. An equidimensional flow extending $s$ is the data of a subdivision $\Gamma' \to \Gamma$, a combinatorial line $X$, and a morphism $\Gamma' \to X$ extending the partial order of $s$. This data is \emph{stable} if all vertices of $X$ are images of vertices of $\Gamma$. 
\end{definition}
We note that stability is not an absolute notion, but depends on the original graph $\Gamma$ on which $s$ is defined.  

Every equidimensional flow extending $s$ defines an ordering extending $s$. Conversely, given an ordering $\kappa$ extending $s$, we can define a combinatorial line $X_\kappa$ by taking one vertex $v_i$ for each vertex of a non-contracted edge of $\Gamma$, according to the order determined by $\kappa$. This defines an evident function
$$
\phi: \Gamma \to X_\kappa
$$
extending $s$. This is however not a morphism: edges of $\Gamma$ can map into unions of cells of $X_\kappa$. There is a minimal subdivision $\Gamma_\kappa$ which turns $\Gamma \to X_\kappa$ into a morphism, by adjoining the preimages of the vertices $\phi^{-1}(v_i)$ to the non-contracted edges of $\Gamma_\kappa$. The following lemma is almost immediate. 
\begin{lemma}
\label{lem:ordering=equid}
Orderings extending $s$ are equivalent to stable equidimensional flows extending $s$, under the correspondence 
$$
\kappa \leftrightarrow \phi: \Gamma_\kappa \to X_\kappa.
$$
\end{lemma}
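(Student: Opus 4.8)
The plan is to unwind the two constructions and check that they are mutually inverse. I will first verify that both sides of the correspondence carry the data claimed, and then exhibit the maps in each direction.

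\textit{From an equidimensional flow to an ordering.} Given a stable equidimensional flow extending $s$, that is, a subdivision $\Gamma' \to \Gamma$, a combinatorial line $X$, and a morphism $\phi \colon \Gamma' \to X$ extending the partial order of $s$, I observed already above that such a morphism defines a total order on the vertices of the non-contracted edges of $\Gamma$: each such vertex is a vertex of $\Gamma'$ (subdivision adds vertices only in the interiors of edges), hence maps to a vertex $v_i$ of $X$, and since $X$'s vertices are ordered this pulls back to a total order on the relevant vertices of $\Gamma$. This total order extends the partial order of $s$ because $\phi$ is assumed to extend it, and because along a non-contracted edge of $\Gamma$ the map $\phi$ is monotone. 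Hence we obtain an ordering $\kappa$ extending $s$ in the sense of the earlier definition. Note that here stability of the flow is not needed for the construction, only for the reverse direction to land in the subcategory being described.

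\textit{From an ordering to an equidimensional flow.} Conversely, given an ordering $\kappa$ extending $s$, the construction in the paragraph preceding the lemma produces the combinatorial line $X_\kappa$, one vertex per vertex of a non-contracted edge of $\Gamma$ in the $\kappa$-order, and the evident set-map $\phi \colon \Gamma \to X_\kappa$; passing to the minimal subdivision $\Gamma_\kappa$ obtained by adjoining the preimages $\phi^{-1}(v_i)$ on the non-contracted edges turns $\phi$ into a morphism $\Gamma_\kappa \to X_\kappa$. I would check the three requirements: that $\phi$ is equidimensional (it takes vertices to vertices by construction of $\Gamma_\kappa$), that it extends the partial order of $s$ (immediate, since $\kappa$ does and $\phi$ is built from $\kappa$), and that it is stable, i.e. every vertex of $X_\kappa$ is the image of a vertex of $\Gamma$ — this holds because $X_\kappa$ has exactly one vertex for each vertex of a non-contracted edge of $\Gamma$, and each such vertex of $\Gamma$ maps to its corresponding vertex of $X_\kappa$. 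So the output is a stable equidimensional flow.

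\textit{Mutual inverseness.} Finally I would check the two round-trips. Starting from $\kappa$, forming $\Gamma_\kappa \to X_\kappa$ and reading off the induced total order on vertices of non-contracted edges recovers $\kappa$, since the order on $V(X_\kappa)$ was defined to be $\kappa$. Starting from a stable equidimensional flow $\phi \colon \Gamma' \to X$, reading off the ordering $\kappa$ and then rebuilding $\Gamma_\kappa \to X_\kappa$: stability says every vertex of $X$ comes from a vertex of a non-contracted edge of $\Gamma$, so the vertex sets of $X$ and $X_\kappa$ match compatibly with the orders, giving a canonical isomorphism $X \cong X_\kappa$; and $\Gamma_\kappa$ is the minimal subdivision making $\phi$ a morphism, so there is a canonical map $\Gamma_\kappa \to \Gamma'$ over $\Gamma$ identifying the two equidimensional flows up to the unique isomorphism. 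The only genuinely delicate point — and the step I expect to be the main obstacle — is the bookkeeping of subdivisions and minimality: one must be careful that "the" equidimensional flow attached to $\kappa$ is well-defined only after passing to the minimal such subdivision, so the correspondence is really between orderings and stable equidimensional flows taken up to refinement (equivalently, with a chosen minimal representative), and I would state the lemma with that understanding, exactly as the surrounding text treats minimal subdivisions in the lemma on $\Gamma_s$.
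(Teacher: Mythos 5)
Your proposal is correct and follows exactly the route the paper has in mind: the paper itself gives the two constructions in the paragraphs preceding the lemma and then declares the lemma ``almost immediate,'' so what you have done is simply write out the omitted verification — that each direction lands in the correct class of objects and that the two round-trips are the identity — which is the intended argument. The only mild quibble is your closing caveat about the correspondence holding ``up to refinement'': the stability condition is precisely what pins down the minimal subdivision, so once you have checked (as you do) that your $\phi\colon\Gamma_\kappa\to X_\kappa$ is stable and that stability of an arbitrary $\phi'\colon\Gamma'\to X$ forces $X\cong X_\kappa$ and $\Gamma'\cong\Gamma_\kappa$, the correspondence is an honest bijection and no passage to equivalence classes is needed.
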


\subsection{Numerical Stability Conditions}
\label{subsection:NumStabCond}
Let $\Gamma$ be a graph. 
\begin{definition}
A subdivision $\Gamma' \to \Gamma$ is called a \emph{quasi}-stable model of $\Gamma$ if every edge in $\Gamma$ is subdivided at most once. 
\end{definition}

In other words, the subdivision $\Gamma' \to \Gamma$ introduces at most one exceptional vertex on each edge of $\Gamma$. 

\begin{definition}
A divisor $D$ on a quasi-stable model $\Gamma'$ of $\Gamma$ is called admissible if its value on exceptional vertices is $1$. 
\end{definition}

Finally, we recall that a stability condition $\theta$ on $\Gamma$ is simply a function 
$$
\theta: V(\Gamma) \to \mathbb{R}.
$$
It is \emph{non-degenerate} or \emph{generic} if, for every subgraph $S \subset \Gamma$, we have 

$$
\theta(S) \pm \frac{E(S,S^c)}{2} \notin \mathbb{Z}
$$
where $E(S,S^c)$ is the valence of the image of $S$ in the contraction $\Gamma/S$. 

A stability condition determines a list of semistable divisors on $\Gamma$: those $D$ for which 

$$
\theta(S) - \frac{E(S,S^c)}{2} \le D(S) \le \theta(S) + \frac{E(S,S^c)}{2}.
$$
The divisor is stable if the inequalities are strict. Thus, a stability condition is non-degenerate if and only if all semistable divisors are stable. 

A stability condition on $\Gamma$ lifts canonically to a quasi-stable model $\Gamma' \to \Gamma$ by declaring its value on exceptional vertices to be $0$.

\begin{definition}
Let $\theta$ be a stability condition on $\Gamma$, and $\Gamma' \to \Gamma$ a quasi-stable model. We call an admissible divisor $D$ on $\Gamma'$ $\theta$-semistable if for every subgraph $S \subset \Gamma'$, we have
$$
\theta(S) - \frac{E(S,S^c)}{2} \le D(S) \le \theta(S) + \frac{E(S,S^c)}{2}.
$$
\end{definition}
We note that if $\theta$ is generic, the inequalities above are strict for every divisor supported on $V(\Gamma) \subset V(\Gamma')$. However, equality can hold for divisors that have support on exceptional vertices.\footnote{and in fact necessarily holds for $S = \left \{v \right \}$, where $v$ is an exceptional vertex.}

We thus arrive at the key combinatorial notions of this paper. Let $A$ denote a fixed tropical divisor on $\Gamma$, and $\theta$ a stability condition. 

\begin{definition}
A $\theta$-flow balancing $A$ (or $\theta$-flow for short) consists of a quasi-stable model $\Gamma' \to \Gamma$, a $\theta$-semistable divisor $D$, and an acyclic flow $s$ with
$$
\mathsf{div}(s) = A - D.
$$
\end{definition}

\begin{definition}
\label{def: thetastableeqflow}
A $\theta$-stable equidimensional flow (balancing $A$) consists of 
\begin{itemize}
\item A quasi-stable model $\Gamma' \to \Gamma$. 
\item A $\theta$-semistable divisor $D$ on $\Gamma'$. 
\item An acyclic flow $s$ on $\Gamma'$ balancing $D$: 
$$
\mathsf{div(s)} = A-D.
$$
\item A stable equidimensional flow $\Gamma'' \to X$ extending $s$. 
\end{itemize}

\subsection{Specialization}
The data discussed above specializes with respect to edge contractions. Namely, if $\overline{\Gamma}$ is obtained from $\Gamma$ by contracting some vertices, 

\begin{enumerate}
\item Divisors on $\Gamma$ specialize to divisors on $\overline{\Gamma}$ by $D \to \overline{D}$
$$
\overline{D}(v) = \sum_{w} D(w)
$$ 
where $w$ ranges through all vertices of $\Gamma$ contracting to $v \in V(\overline{\Gamma})$. 
\item Stability conditions specialize exactly analogously as $\overline{\theta}(v) = \sum \theta(w)$. 
\item Flows specialize by $s \to \overline{s}$, with 
$$
\overline{s}(\vec{e}) = s(\vec{e})
$$
under the natural inclusion $\mathcal{E}(\overline{\Gamma}) \subset \mathcal{E}(\Gamma)$. 
\end{enumerate}
All notions discussed, starting with subdivisions and culminating with $\theta$-stable equidimensional flows, specialize through under these definitions. 

%\Sam{Add definition of Oribfold tropical curves and transversal twists}. 
\end{definition}

\section{Abel-Jacobi Theory on Tropical Curves}
\label{sec: tropical}
The notions of the previous section are combinatorial. We extend them to tropical notions by introducing a metric on our graphs. We recall our convention: monoids $M$ are sharp (they have no non-trivial units), finitely generated, integral and saturated. They are dual to the category of cones, which consist of \emph{rational polyhedral cones} together with an integral structure, under the correspondence 

\begin{align*}
M \to M^{\vee}:= (\mathsf{Hom}(M,\mathbb{R}_{\ge 0}),\mathsf{Hom}(M,\mathbb{Z}))\\
\sigma^{\vee} = \mathsf{Hom}(C,\mathbb{R}_{\ge 0}) \cap \mathsf{Hom}(N,\mathbb{Z}) \leftarrow \sigma = (C,N).
\end{align*}

Let $\Gamma$ be a tropical curve metrized by a monoid $M$. In other words, $\Gamma$ is a graph together with a length function 
$$
\ell:E(\Gamma) \to M-0
$$
from its set of edges to the non-zero elements of $M$. We denote the length of the edge $e$ by $\ell_e$. 

Divisors and flows are combinatorial data and do not take into account the metric structure of $\Gamma$. Piecewise linear functions on $\Gamma$ on the other hand are honest tropical notions:

\begin{definition}
A piecewise linear function $\alpha$ is a function 
$$
\alpha: V(\Gamma) \to M^\gp
$$
from the vertices of $\Gamma$ to the associated group of $M$, which satisfies the following condition: for every oriented edge $\vec{e}$ in $\Gamma$ between $v,w \in \Gamma$, there exists an integer $s(\vec{e}) \in \ZZ$ such that 
$$
\alpha(w) - \alpha(v) = s(\vec{e})\ell_e.
$$
We write $\mathsf{PL}(\Gamma)$ for the group of piecewise linear functions on $\Gamma$. 
\end{definition}

Every piecewise linear function $\alpha$ on $\Gamma$ determines a flow $s_\alpha$ by taking its underlying slopes: 
\begin{align*}
\mathsf{PL}(\Gamma) \to \mathsf{Flow}(\Gamma)\\
\alpha \to s_{\alpha}, s_\alpha(\vec{e}) = \textup{ slope of } \alpha \textup{ on } \vec{e}. 
\end{align*}
In particular, we can talk about divisors of piecewise linear functions, orientations, and so on, via the underlying flow. The flows that can arise from a piecewise linear function are constrained by the metric structure on $\Gamma$. 

\begin{definition}
We call a flow that arises as the underlying slopes of a piecewise linear function a \emph{twist} on $\Gamma$. 
\end{definition}

All flows that arise from piecewise linear functions are acyclic. The condition a flow must satisfy to be a twist on the other hand must involve the metric somehow. In short, we start with a flow $s$ on $\Gamma$ and want to lift it to a function $\alpha$. We can start at a vertex $v \in \Gamma$, and assign a value of $\alpha(v) \in M^\gp$ arbitrarily. But then, the rest of the values $\alpha(w)$ are completely determined by the lengths of $\Gamma$ and the slopes of $s$ (provided $\Gamma$ is connected): for any oriented path $P_{v \to w}$ from $v$ to a vertex $w$, we must have 

$$
\alpha(w) = \alpha(v) + \sum_{\vec{e} \in P_{v \to w}} s(\vec{e})\ell_e
$$
and the function $\alpha$ is well defined if and only if this expression is independent of path. This condition is most conveniently phrased in terms of the \emph{intersection pairing} 
\begin{align*}
\left \langle, \right \rangle : \mathsf{Flow}(\Gamma) \times \mathsf{Flow}(\Gamma) \to M^\gp \\
\left \langle s, t \right \rangle = \frac{1}{2}\sum_{\vec{e} \in 
\mathcal{E}(\Gamma)} s(\vec{e})t(\vec{e})\ell_e.
\end{align*}
In terms of the intersection pairing, the lifting problem amounts to the statement that, for every $\gamma \in H_1(\Gamma)$, we have 
$$
\left \langle s, \gamma \right \rangle = 0.
$$
Here, $H_1(\Gamma)$ is considered as embedded in $\mathsf{Flow}(\Gamma) \cong \mathbb{Z}^{E(\vec{\Gamma})}$, by writing a cycle $\gamma$ as an oriented path 
$$
\gamma = \sum \vec{e}
$$
and associating to $\gamma$ the twist defined by 
$$
\gamma(\vec{e}) = 1.
$$
if $\vec{e}$ is in the path, and $0$ otherwise. 

\begin{remark}
This inclusion identifies $H_1(\Gamma)$ with the kernel of 
$$
\mathsf{div}:\mathsf{Flow}(\Gamma) \to \mathsf{Div}(\Gamma).
$$
This coincides with the usual identification of $H_1(\Gamma)$ with the kernel of 
$$
\mathbb{Z}^{E(\vec{\Gamma})} \to \mathbb{Z}^{V(\Gamma)}
$$
coming from the CW-complex structure on $\Gamma$. 
\end{remark}

\subsection{Subdivisions of Tropical Curves}
Let $\Gamma$ be a tropical curve. A subdivision of $\Gamma$ is a tropical curve $\Gamma'$ metrized by $M$, with a map $\Gamma' \to \Gamma$ so that the sum of the lengths of the edges $e'$ that map to the same edge $e \in \Gamma$ add up to the length of $e$: if $\phi:E(\Gamma') \to E(\Gamma)$ is the induced map of edges, we must have 
$$
\sum_{e' \in \phi^{-1}(e)} \ell_{e'} = \ell_e.
$$
The collection of vertices in $\Gamma'$ that are not in $\Gamma$ are called \emph{exceptional vertices}. A \emph{refinenent} of $\Gamma' \to \Gamma$ is a further subdivision $\Gamma'' \to \Gamma'$. 

Suppose $\Gamma' \to \Gamma$ is a subdivision, and $\alpha$ is a piecewise linear function on $\Gamma'$. It is often desirable to find the minimal subdivision of $\Gamma$ on which $\alpha$ can be defined. 

\begin{definition}
We say that $\Gamma'$ is minimal with respect to $\alpha$ if 

$$
\mathsf{div}(\alpha)(v) \neq 0
$$
on all exceptional vertices $v$ of $\Gamma'$. 
\end{definition}

\begin{lemma}
Suppose $\Gamma'$ is a subdivision of $\Gamma$, and $\alpha$ is a piecewise linear function on $\Gamma'$. There is a unique minimal subdivision $\Gamma_\alpha \to \Gamma$ on which $\alpha$ can be defined. 
\end{lemma}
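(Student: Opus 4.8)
The plan is to mimic exactly the argument already given in the combinatorial setting (the lemma on minimal subdivisions with respect to a flow $s$), now taking into account that the exceptional vertices carry a metric. First I would reduce to the underlying flow: given the piecewise linear function $\alpha$ on the subdivision $\Gamma'$, let $s = s_\alpha$ be its underlying slope flow. The key observation is that the defining condition ``$\mathsf{div}(\alpha)(v)\neq 0$ on exceptional vertices'' coincides with ``$\mathsf{ord}_v(s_\alpha)\neq 0$ on exceptional vertices,'' so the notion of minimality for $\alpha$ matches the notion of minimality for its slope flow. Then I would invoke the earlier lemma to obtain the minimal subdivision $\Gamma_{s_\alpha}\to\Gamma$ supporting $s_\alpha$, and simply metrize it: the exceptional vertices of $\Gamma_{s_\alpha}$ sit at well-defined points along the edges of $\Gamma$, and their positions are forced by where the slope of $\alpha$ actually changes.

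Concretely, the construction of $\Gamma_\alpha$ is: start from $\Gamma'$, and for each exceptional vertex $v$ with $\mathsf{div}(\alpha)(v)=0$, suppress $v$, merging the two adjacent edges into a single edge whose length is the sum of the two lengths (this is consistent with the definition of subdivision of a tropical curve, since the fiberwise sum of lengths over an edge of $\Gamma$ is preserved). Because the slope of $\alpha$ is unchanged across such a $v$, the function $\alpha$ descends to a well-defined piecewise linear function on the resulting curve; iterating until no suppressible exceptional vertex remains produces a subdivision $\Gamma_\alpha\to\Gamma$ on which $\alpha$ is defined and which is minimal with respect to $\alpha$ by construction. For uniqueness, I would argue as in the flow case: any subdivision $\Gamma''\to\Gamma$ on which $\alpha$ is defined must contain, among its vertices, every point of an edge of $\Gamma$ at which the slope of $\alpha$ changes — otherwise $\alpha$ would fail to be piecewise linear there — and these are exactly the exceptional vertices of $\Gamma_\alpha$; hence $\Gamma''$ refines $\Gamma_\alpha$, and in particular $\Gamma_\alpha$ is the unique minimal one.

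I do not expect a serious obstacle here; the statement is essentially a metric refinement of the already-proven combinatorial lemma, and the only thing to check carefully is that the metric bookkeeping (summing lengths when suppressing a vertex) is compatible with the definition of a subdivision of a tropical curve and that the descended function is still genuinely piecewise linear, i.e. still has integer slopes. The mildly delicate point, if any, is purely notational: the phrase ``on which $\alpha$ can be defined'' implicitly means ``there exists a subdivision refining $\Gamma'$ through which $\alpha$ factors as a pullback,'' and one should make sure the minimal such object is compared correctly inside the poset of subdivisions of $\Gamma$ (not of $\Gamma'$) — but this is exactly parallel to the combinatorial lemma and requires no new idea. So the proof will be a short paragraph: reduce to $s_\alpha$, apply the earlier lemma, metrize, and note that the slope-change locus is forced.
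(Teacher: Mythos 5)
Your proposal is correct and follows essentially the same route as the paper: define $\Gamma_\alpha$ by suppressing exactly those exceptional vertices at which $\mathsf{div}(\alpha)$ vanishes, and observe that any subdivision supporting $\alpha$ must keep the vertices where the slope changes, hence refines $\Gamma_\alpha$. The detour through the combinatorial lemma for $s_\alpha$ and the explicit check that lengths add correctly are both harmless elaborations of the paper's terse argument, not a different idea.
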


\begin{proof}
Define $\Gamma_\alpha$ as the subdivision of $\Gamma$ obtained by keeping only the exceptional vertices of $\Gamma'$ on which 
$$
\mathsf{div}(\alpha)(v) \neq 0.
$$
Since the slope of $\alpha$ changes on the vertices $v$, any subdivision that supports $\alpha$ must refine $\Gamma_\alpha$, whereby the uniqueness of $\Gamma_\alpha$ follows. 
\end{proof}

\subsection{Equidimensional Piecewise Linear Functions and Twists}Any integral monoid $M \subset M^\gp$ can be regarded as a partial order on $M^\gp$: for $x,y \in M^\gp$, we declare $x \le y$ if $y - x \in M$. Thus, a piecewise linear function $\alpha$ on $\Gamma$ comes with a partial ordering of its values $\alpha(v) \in M^\gp$. This partial order is evidently compatible with the orientation on $\Gamma$ induced by the underlying twist of $\alpha$. 

\begin{definition}
Let $\Gamma$ be a tropical curve. A piecewise linear function is totally ordered if its values $\alpha(v)$, $v \in V(\Gamma)$, are totally ordered. 
\end{definition}

Borrowing ideas from the theory of semistable reduction, we make the following definition.

\begin{definition}
A piecewise linear function $\alpha$ on $\Gamma$ is \emph{equidimensional} if 
\begin{itemize}
\item The values $\alpha(v)$ are totally ordered. 
\item For any edge $e$ with endpoints $v,w$ that satisfy $\alpha(v)<\alpha(w)$, and any vertex $u$ with $\alpha(v) \leq \alpha(u) \leq \alpha(w)$, we necessarily have $\alpha(u) = \alpha(v)$ or $\alpha(u) = \alpha(w)$. 
\end{itemize}
\end{definition} 

While on its face the definition of equidimensionality seems dependent on the values of $\alpha$, the definition is in fact invariant under translation of the values $\alpha(v)$ by any common element $x \in M^\gp$. Thus, the definition descends to twists, and it makes sense to talk about equidimensional twists. 

The definition of equidimensional piecewise linear function can perhaps be clarified by introducing its image, which is a \emph{one dimensional tropical target}, also referred to as a tropical line. 

\begin{definition}
A tropical line is the structure of a one-dimensional polyhedral complex metrized by $M$ on $\mathbb{R}$. 
\end{definition}
Thus, a tropical line is, as a polyhedral complex, simply a combinatorial line $X$, with a length assignment $\ell_{e_i} \in M-0$ for each of its edges. But to say that this polyhedral complex is a polyhedral complex structure on $\mathbb{R}$ over $M$ means that it furthermore comes with a chosen piecewise linear embedding 
$$
\iota_X: X \subset \mathbb{R}.
$$
This data is very similar to the definition of a piecewise linear function on a tropical curve: an element 
$$
\iota_X(v_i) := \gamma_i \in M^\gp
$$
for each vertex $v \in V(X)$, such that 
$$
\gamma_{i+1}- \gamma_{i} = \ell_{e_i}.
$$
In particular, $\gamma_{i+1}>\gamma_i$. We will also consider the trivial polyhedral decomposition as an allowable tropical line, where $\mathbb{R}$ is considered as a single cell. In that case, we will simply write $\mathbb{R}$ for the tropical line. 

Let $\Gamma$ be a tropical curve and $X$ a tropical line metrized by $M$. By definition, a map of polyhedral complexes $\Gamma \to X$ is a piecewise linear map that respects the cell structure of the polyhedral decomposition: each cell (that is, vertex or edge) of $\Gamma$ maps into a cell of $X$ (rather than a union of more than one cells). In particular, a piecewise linear function $\alpha$ on $\Gamma$ can be tautologically thought of as a map 
$$
\Gamma \to \mathbb{R}
$$
to the trivial tropical line. The piecewise linear function $\alpha$ may or may not factor through $X$: 

\[
\begin{tikzcd}
\Gamma \ar[rr,dashed,"\beta"] \ar[rd,"\alpha"] & & X \ar[ld,"\iota_X"] \\ & \mathbb{R} &  
\end{tikzcd}
\]
Since $\iota_X$ is a monomorphism, the arrow $\beta$, if it exists, is unique. 

\begin{definition}
A map of polyhedral complexes $P \to Q$ is called equidimensional if it takes cells \emph{onto} cells.
\end{definition}
As tropical curves and tropical lines are particularly simple examples of polyhedral complexes, the meaning of equidimensionality of a map 
$$
\beta: \Gamma \to X
$$
is very simple to describe: it says that each vertex $v \in \Gamma$ must map to a vertex in $X$. 

\begin{lemma}
Let $\Gamma$ be a tropical curve metrized by $M$, and let $\alpha$ be a piecewise linear function on $\Gamma$. Then $\alpha$ is equidimensional if, and only if, there exist a tropical target $X$ and a factorization of $\alpha$ through an equidimensional map $\beta: \Gamma \to X$. 
\end{lemma}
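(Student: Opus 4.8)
The plan is to prove the two implications directly: the ``only if'' direction is a construction of the tropical line from the ordered values of $\alpha$, and the ``if'' direction is a routine verification by pullback. This is the metric counterpart of Lemma~\ref{lem:ordering=equid}; the point of the equidimensionality hypothesis is precisely that no subdivision of $\Gamma$ is needed.

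For the ``only if'' direction, suppose $\alpha$ is equidimensional. Since the values $\alpha(v)$, $v \in V(\Gamma)$, are totally ordered, list the \emph{distinct} values as $\gamma_1 < \gamma_2 < \cdots < \gamma_n$ in $M^\gp$, and build $X$ as the combinatorial line on vertices $v_1, \dots, v_n$ with $e_i$ joining $v_i$ to $v_{i+1}$, equipped with the length $\ell_{e_i} := \gamma_{i+1} - \gamma_i$ (this lies in $M-0$ because $\gamma_{i+1} > \gamma_i$ and the two are distinct) and the embedding $\iota_X(v_i) := \gamma_i$; this is a tropical line by construction. Define $\beta \colon \Gamma \to X$ on vertices by $\beta(v) = v_i$ when $\alpha(v) = \gamma_i$, and extend affinely over edges. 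The content to check is that $\beta$ is a morphism of polyhedral complexes, i.e.\ that each edge of $\Gamma$ lands in a single cell of $X$: if $\alpha$ is constant on an edge $e$ then $e$ maps to a vertex; if $e$ has endpoints $v,w$ with $\alpha(v) = \gamma_i < \alpha(w) = \gamma_j$, the second clause of equidimensionality forces $j = i+1$, so $e$ maps onto the edge $e_i$ (with positive slope $s(\vec e)$, since $s(\vec e)\ell_e = \gamma_{i+1}-\gamma_i \in M - 0$). Hence $\beta$ takes cells onto cells, so it is equidimensional, and $\iota_X \circ \beta = \alpha$ holds on vertices and therefore, by affineness on each edge, as maps of polyhedral complexes.

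For the ``if'' direction, suppose $\alpha = \iota_X \circ \beta$ with $\beta \colon \Gamma \to X$ equidimensional. Then each $\alpha(v) = \iota_X(\beta(v))$ is one of the totally ordered vertex values $\gamma_1 < \cdots < \gamma_n$ of $X$, so the values of $\alpha$ are totally ordered. Given an edge $e$ with endpoints $v,w$ and $\alpha(v) < \alpha(w)$: as $\beta$ is a polyhedral morphism, $e$ maps into one cell of $X$, and since $\beta(v) \ne \beta(w)$ that cell is an edge $e_i$, so $\{\beta(v),\beta(w)\} = \{v_i, v_{i+1}\}$ and $\alpha(v) = \gamma_i$, $\alpha(w) = \gamma_{i+1}$ are consecutive vertex values. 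For any vertex $u$ with $\alpha(v) \le \alpha(u) \le \alpha(w)$, the value $\alpha(u)$ is a vertex value squeezed between two consecutive ones, so $\alpha(u) = \alpha(v)$ or $\alpha(u) = \alpha(w)$, which is the second clause of equidimensionality.

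I do not expect a genuine obstacle; the only points needing care are bookkeeping about polyhedral-complex maps — that an edge of $\Gamma$ maps \emph{onto} rather than merely into a cell of $X$, and that the differences $\gamma_{i+1}-\gamma_i$ genuinely lie in $M-0$ so that $X$ is a legitimate tropical line. The degenerate case in which $\alpha$ is constant (so $X$ has a single vertex) should be noted but is immediate. It is also worth recording that, because equidimensionality is translation-invariant, the target $X$ and map $\beta$ produced above descend to the underlying twist of $\alpha$.
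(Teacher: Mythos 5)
Your proposal is correct and follows essentially the same approach as the paper's proof: build $X$ from the distinct ordered values of $\alpha$ for the forward direction, and use consecutiveness of vertices of $X$ for the converse. You are merely more explicit than the paper about verifying that $\beta$ is a genuine morphism of polyhedral complexes and that the lengths $\gamma_{i+1}-\gamma_i$ lie in $M - 0$.
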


\begin{proof}
Suppose $e$ is an edge of $\Gamma$ with endpoints $v,w$, and $\alpha(w) > \alpha(v)$. Suppose $u$ is a vertex of $\Gamma$ with $\alpha(v) \le \alpha(u) \le \alpha(w)$. If $\alpha$ factors through $X$, then $\beta(v)$ and $\beta(w)$ must be consecutive vertices $v_{i},v_{i+1}$ of $X$ (as, otherwise, the edge $e$ would map to a union of cells). But to say that $\beta$ is equidimensional is to say that $\beta(u)$ must be a vertex of $X$, and hence one of $v_{i},v_{i+1}$. So either $\alpha(u)=\alpha(v)$ or $\alpha(u)=\alpha(w)$. Conversely, given an equidimensional function $\alpha$, we build $X$ by taking one vertex $v_i$ for each distinct value in \{$\alpha(v): v \in V(\Gamma)\}$, and define 
$$
\iota_X(v_i) = \alpha(v)
$$
to be the corresponding value. 
\end{proof}

\begin{remark}
Note that while vertices of $\Gamma$ must map to vertices of $X$, edges can map either onto edges of $X$ or vertices. Edges that map onto vertices of $X$ are precisely those on which the slope of $\alpha$ is $0$, i.e. the \emph{contracted} edges. 
\end{remark}

\begin{remark}
The definition of equidimensionality may seem convoluted from the vantage of tropical curves metrized by monoids, but it is natural from the dual point of view of cone complexes and semistable reduction. We find the dual point of view far more intuitive, but, the tropical perspective is more ubiquitous in the literature.  Namely, a tropical curve $\Gamma$ metrized by $M$ is equivalent data to a map of cone complexes (with integral structure) 
$$
\Sigma_C \to \Sigma_S := M^{\vee}
$$
Plainly, one builds $\Sigma_C$ out of $\Gamma$ as a fibration over $\Sigma_S$. The fiber over $x \in \Sigma_S$ is obtained by attaching a vertex $v_x$ for each $v \in V(\Gamma)$, and an edge of length $l_e(x) \in \mathbb{R}_{\ge 0}$ for each $e \in E(\Gamma)$. Then, a piecewise linear function $\alpha$ on $\Gamma$ corresponds to a piecewise linear map 
\[
\begin{tikzcd}
\Sigma_C \ar[rr] \ar[dr]& & \Sigma_S \times \mathbb{R} \ar[dl,"\textup{pr}_1"]\\
& \Sigma_S&
\end{tikzcd}
\]
A tropical line $X$ corresponds to a subdivision $X \to \Sigma_S \times \mathbb{R} \to \Sigma_S$ so that the composed map $X \to \Sigma_S$ is equidimensional (maps cells onto cells) and furthermore sends integral structures onto integral structures; and an equidimensional piecewise linear functions is a factorization of $\alpha$ through a tropical line $X$, that sends cells of $\Sigma_C$ onto cells of $\Sigma_X$. The name equidimensional comes from the fact that maps of fans which send cones onto cones are the ones that induce equidimensional maps of toric varieties. 
\end{remark}

Suppose $\alpha$ is a piecewise linear function or twist on $\Gamma$, and $\Gamma' \to \Gamma$ is a subdivision. Then $\alpha$ lifts to a piecewise linear function on $\Gamma'$. Suppose $\alpha$ is equidimensional on $\Gamma'$. We say that $\Gamma'$ is a \emph{minimal} subdivision on which $\alpha$ is equidimensional if the following stability condition holds: \\

$(\star)$ For every exceptional vertex $v$ of $\Gamma'$, there exists a non-exceptional vertex $w$ of $\Gamma$ such that $\alpha(v) = \alpha(w)$. \\

We remark that the minimal $\Gamma'$ on which $\alpha$ is equidimensional is in general finer than the minimal model $\Gamma_\alpha$ on which $\alpha$ is defined. Furthermore, the model is unique if it exists. This is very similar to the combinatorial analogues in \ref{lem:ordering=equid}, but the tropical picture deviates here: a model where $\alpha$ becomes equidimensional may not exist, as the required combinatorial subdivisions may not lift respect the metric structure. Experts will recognize here that one can always find a minimal equidimensional model, but only after altering the base monoid $M$. However, one can say with relative ease: 

\begin{lemma}
Suppose that $\Gamma$ is a tropical curve with piecewise linear function $\alpha$, and $\Gamma'$ is a subdivision on $\Gamma$ on which $\alpha$ is equidimensional. Then a minimal model $\mathsf{Eq}_{\Gamma}(\alpha)$ on which $\alpha$ is equidimensional exists. 
\end{lemma}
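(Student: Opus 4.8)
I would prove the lemma by exhibiting $\mathsf{Eq}_\Gamma(\alpha)$ explicitly and then verifying that it is a subdivision of $\Gamma$ (metrized by $M$), that $\alpha$ is equidimensional on it, that it satisfies the stability condition $(\star)$, and that $\Gamma'$ refines it — which simultaneously delivers minimality and the earlier-promised uniqueness. The whole content is that passing from $\Gamma'$ to a \emph{coarser} subdivision cannot destroy equidimensionality or violate the metric constraint, since we are only \emph{removing} exceptional vertices, not adding new ones.

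\textbf{Construction.} On the edge set of $\Gamma$, for each edge $e$ of $\Gamma$ with preimage path $e'_1,\dots,e'_r$ in $\Gamma'$ (with intermediate exceptional vertices $v_1,\dots,v_{r-1}$, ordered along $e$), I would keep exactly those exceptional vertices $v_j$ for which there is \emph{no} non-exceptional vertex $w$ of $\Gamma$ with $\alpha(v_j)=\alpha(w)$; equivalently, delete $v_j$ (merging the two adjacent segments into one) precisely when $\alpha(v_j)$ already equals the value of $\alpha$ at some original vertex of $\Gamma$. Call the resulting subdivision $\mathsf{Eq}_\Gamma(\alpha)$. Note that when a vertex $v_j$ is deleted, $\alpha$ restricted to the merged segment is still linear: the value $\alpha(v_j)$ is sandwiched between the endpoint values, so if the two slopes differed there would be an original vertex $w$ strictly between the endpoint values in the total order, contradicting equidimensionality of $\alpha$ on $\Gamma'$ applied to that original edge — so in fact the slope is unchanged and $\alpha$ descends to a well-defined piecewise linear function on $\mathsf{Eq}_\Gamma(\alpha)$, with the metric condition $\sum_{e'\in\phi^{-1}(e)}\ell_{e'}=\ell_e$ automatically preserved since we only fuse consecutive segments.

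\textbf{Verification of equidimensionality and $(\star)$.} Equidimensionality of $\alpha$ on $\mathsf{Eq}_\Gamma(\alpha)$: the values $\alpha(v)$ for $v$ a vertex of $\mathsf{Eq}_\Gamma(\alpha)$ form a subset of those on $\Gamma'$, hence are totally ordered; and the "no vertex strictly between" condition is \emph{inherited} from $\Gamma'$ — if $\alpha(v)<\alpha(u)<\alpha(w)$ for $v,w$ endpoints of an edge of $\mathsf{Eq}_\Gamma(\alpha)$ and $u$ a vertex of $\mathsf{Eq}_\Gamma(\alpha)$, then $v,w$ are also (possibly non-adjacent) vertices of $\Gamma'$ and $u$ is a vertex of $\Gamma'$ with value strictly between, which, using that $\alpha$ is linear along the corresponding path in $\Gamma'$, contradicts equidimensionality on $\Gamma'$. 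Condition $(\star)$ holds by construction: the only exceptional vertices we retained are those whose $\alpha$-value is \emph{not} realized by an original vertex — wait, that is backwards, so here I must be careful: I should retain the $v_j$ with $\alpha(v_j)$ equal to some original value, and delete those whose value is \emph{new}; then $(\star)$ is exactly the retention criterion. (I will fix the criterion in the write-up so that $(\star)$ is the definition of what we keep; the point is that deleting a vertex whose value is new would be illegal because it could merge two segments of \emph{different} slope — but by equidimensionality on $\Gamma'$ such a vertex cannot be interior to an edge of $\Gamma$ with an original vertex realizing an intermediate value, so in fact new-valued exceptional vertices of $\Gamma'$ never occur strictly between two original values on a single edge, and hence can always be absorbed. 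This is the subtle case.)

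\textbf{Minimality and the main obstacle.} Minimality: any subdivision $\Gamma''$ of $\Gamma$ on which $\alpha$ is equidimensional and which satisfies $(\star)$ must contain, on each edge of $\Gamma$, an exceptional vertex at every $\alpha$-value where the slope of $\alpha$ changes; these are precisely the retained vertices of $\mathsf{Eq}_\Gamma(\alpha)$ together with original vertices, so $\Gamma''$ refines $\mathsf{Eq}_\Gamma(\alpha)$; in particular $\mathsf{Eq}_\Gamma(\alpha)$ is unique with these properties and is refined by $\Gamma'$. The step I expect to be the main obstacle is the one flagged above: showing that when we delete an exceptional vertex $v_j$ of $\Gamma'$ whose $\alpha$-value is \emph{not} realized by an original vertex, the slopes of $\alpha$ on the two adjacent segments genuinely agree, so that the merge is legal and respects the metric over $M$ — i.e., that no such "slope-changing, new-valued" exceptional vertex exists in the interior of an edge of $\Gamma$. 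This is exactly where equidimensionality of $\alpha$ on $\Gamma'$ is used in an essential way (applied to the original edge of $\Gamma$ containing $v_j$, with $u$ ranging over original vertices), and it is the only place the hypothesis "$\alpha$ is equidimensional on $\Gamma'$" — as opposed to merely "defined on $\Gamma'$" — enters. Once this is established, the remaining bookkeeping is routine.
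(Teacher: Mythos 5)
Your construction is the same as the paper's: $\mathsf{Eq}_\Gamma(\alpha)$ is obtained from $\Gamma'$ by deleting exactly the exceptional vertices that violate $(\star)$, i.e.\ those whose $\alpha$-value is not realized by any original vertex. (You flip the criterion mid-paragraph and then correct yourself; in a clean write-up the retention rule should simply be stated as ``keep $v$ iff some non-exceptional vertex $w$ has $\alpha(v)=\alpha(w)$.'') The paper's proof is a single sentence giving this construction with no verification, so your attempt to check the details is welcome; but two of your verification steps are off.

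First, the ``main obstacle'' you flag --- that deleting a new-valued exceptional vertex might merge two segments of different slope --- does not exist in this setting, and your proposed resolution is false anyway. Here $\alpha$ is a piecewise linear function on $\Gamma$ lifted to $\Gamma'$: by definition of the lift, the slope of $\alpha$ on every segment of $\Gamma'$ lying over a given edge $e$ of $\Gamma$ is the same, namely the slope of $\alpha$ on $e$. Thus $\mathsf{div}(\alpha)$ vanishes at every exceptional vertex of $\Gamma'$, and deleting any subset of them yields a well-defined PL function on the coarser subdivision with the metric automatically preserved. (By contrast, the assertion ``new-valued exceptional vertices never occur strictly between two original values on a single edge'' is simply not true: subdivide an edge $v_1v_2$ with $\alpha(v_1)=0$, $\alpha(v_2)=3$ at a point where $\alpha$ takes the value $3/2$, while a third vertex elsewhere has $\alpha=1$; equidimensionality on $\Gamma'$ holds, the value $3/2$ is new and interior, and there is no contradiction.)

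Second, the equidimensionality check has a genuine gap. Suppose $v,w$ are the endpoints of an edge $f$ of $\mathsf{Eq}_\Gamma(\alpha)$ with $\alpha(v)<\alpha(w)$, and $u$ is a vertex of $\mathsf{Eq}_\Gamma(\alpha)$ with $\alpha(v)<\alpha(u)<\alpha(w)$. In $\Gamma'$ the edge $f$ is a monotone path $v=p_0,p_1,\dots,p_m=w$, and $\alpha(u)$ lies in some interval $[\alpha(p_i),\alpha(p_{i+1})]$. If $\alpha(u)$ is strictly interior to this interval you do get a contradiction with equidimensionality on $\Gamma'$, as you say. But it is also possible that $\alpha(u)=\alpha(p_i)$ for some $0<i<m$, and this does \emph{not} contradict equidimensionality directly. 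The missing step is to observe that $p_i$ was deleted, hence violates $(\star)$, hence no original vertex has $\alpha$-value $\alpha(p_i)$; yet $u$ is either original (giving an immediate contradiction) or a retained exceptional vertex, which by $(\star)$ shares its $\alpha$-value with some original vertex $w'$, again a contradiction. Only with this extra use of the retention criterion is the verification complete. The rest of your argument (minimality, uniqueness, that any valid $\Gamma''$ refines $\mathsf{Eq}_\Gamma(\alpha)$) is fine.
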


\begin{proof}
As in the construction of $\Gamma_\alpha$, one obtains $\mathsf{Eq}_\Gamma(\alpha)$ from $\Gamma'$ by deleting all exceptional vertices in $\Gamma'$ that violate $(\star)$. 
\end{proof}

\begin{definition}
Suppose $\alpha$ is a piecewise linear function on $\Gamma$, and that $\alpha$ lifts to an equidimensional function 
$$
\Gamma' \to X
$$
on some subdivision of $\Gamma$. We call the lift stable if $\Gamma' \to X$ is the minimal such lift, i.e. satisfies $(\star)$. We write 
$$
\alpha: \mathsf{Eq}_{\Gamma}(\alpha) \to X
$$
for the minimal lift. 
\end{definition}

\section{Tropical Moduli}
\label{sec: tropical moduli}
This section closely follows \cite{MarcusWise} in spirit, but in the tropical world. We make the simplifying assumption that all graphs that appear have at least one leg.  

\begin{definition}
We define stacks
$$
\Sigma_{\textbf{Div}}, \Sigma_{\textbf{Ord}}, \Sigma_{\textbf{Rub}}: \textbf{Mon} \to \textbf{Sets}
$$
by 
\begin{align*}
\Sigma_{\textbf{Div}}(M) = \{\Gamma, \alpha \in \mathsf{PL}(\Gamma)\} \\
\Sigma_{\textbf{Ord}}(M) = \{\Gamma, \alpha \in \mathsf{PL}(\Gamma), \textup{an ordering of }\alpha(v) \}\\
\Sigma_{\textbf{Rub}}(M) = \{\Gamma, \alpha \in \mathsf{PL}(\Gamma), \mathsf{Eq}_\Gamma(\alpha) \to X \}
\end{align*}
where 
\begin{itemize}
    \item $\Gamma$ is metrized by $M$. 
    \item $\alpha$ vanishes on the vertex of $\Gamma$ containing the leg. 
\end{itemize}
Isomorphisms are isomorphism of graphs that respects the functions, orderings, and equidimensional lifts. 
\end{definition}

\begin{remark}
The assumption that $\Gamma$ contains a leg is not serious, but we impose it to rigidify the problems above via the condition $\alpha=0$ on the vertex. Otherwise, we have to talk about tropical line torsors, which we'd rather avoid as all applications we have in mind already have a leg.
\end{remark}

Let $\textbf{CombDiv}$ be the category whose objects consist of a graph $\Gamma$ and an acyclic flow $s$. A map $(\Gamma,s) \to (\overline{\Gamma},\overline{s})$ in $\textbf{CombDiv}$ is given by a map $f: \Gamma \to \overline{\Gamma}$, where $\overline{\Gamma}$ is an edge contraction of $\Gamma$, such that $\overline{s} \circ f = s$ (in particular, automorphisms respecting the flow are allowed). Similarly, we define $\textbf{CombOrd}$ to consist of pairs $(\Gamma,s)$ and a total ordering $\kappa$ on $\Gamma$ extending $s$; and $\textbf{CombRub}$ to consist of a pair $(\Gamma,s)$ and a stable equidimensional lift $\Gamma' \to X$. 

For each $(\Gamma,s) \in \textbf{CombDiv}$, define a cone 
$$
\sigma_{(\Gamma,s)} \subset \mathbb{R}_{\ge 0}^{E(\Gamma)} 
$$
consisting of the $\ell$ that satisfy the equations 
$$
\left \langle s,\gamma \right \rangle_{\ell} =0 
$$
for all $\gamma \in H_1(\Gamma)$. The pairing here is the intersection pairing of section \ref{sec: tropical}. By definition, the pairing requires a length on each edge of $\Gamma$. The subscript $\ell$ here means that on the point $\ell = (\ell_{e})_{e \in E(\Gamma)}$ of $\mathbb{R}_{\ge 0}^{E(\Gamma)}$, we give the tautological length $\ell_e$ to the edge $e$. Under a morphism $(\Gamma,s) \to (\overline{\Gamma},\overline{s})$, we get a face inclusion 
$$
\sigma_{(\overline{\Gamma},\overline{s})}<\sigma_{(\Gamma,s)}
$$
and so we may glue
$$
\Sigma_{\textbf{Div}}' : = \varinjlim_{(\Gamma,s)} \sigma_{(\Gamma,s)}.
$$
Similarly, for a triple $(\Gamma,s,\Gamma' \to X)$ in $\textbf{CombRub}$, we take the cone 
$$
\sigma_{(\Gamma,s,\Gamma' \to X)} \subset \mathbb{R}_{\ge 0}^{E(\Gamma')} \times \mathbb{R}_{\ge 0}^{E(X)}
$$
consisting of the $(\ell_{e},\ell_f)_{e \in E(\Gamma),f \in E(X)}$ that satisfy 
$$
s(e)l_e = l_f
$$
and glue to a cone stack 
$$
\Sigma_{\textbf{Rub}}':=\varinjlim_{(\Gamma,s,\Gamma \to X)} \sigma_{(\Gamma,s,\Gamma' \to X)}.
$$
Finally, for $(\Gamma,s,\kappa) \in \mathbf{CombOrd}$, we take the cone 
$$
\sigma_{(\Gamma,s,\kappa)}
$$
defined by the equations 
$$
\left \langle s, \gamma \right \rangle_\ell = 0
$$
for $\gamma \in H_1(\Gamma)$, and the additional condition: choose a minimal vertex $v_0$ for the ordering $\kappa$, and for each $w \in \Gamma$ an oriented path $P_{v_0 \to w}$. Then, we keep the $\ell=(\ell_e)_{e \in E(\Gamma)}$ for which additionally 
$$
\sum_{\vec{e} \in P_{v \to w}} s(\vec{e})\ell_e \le \sum_{\vec{e} \in P_{v \to w'}} s(\vec{e})\ell_e 
$$
whenever $w \le w'$ in the total order determined by $\kappa$\footnote{We note that the equations $\left \langle s, \gamma \right \rangle_\ell = 0$ imply that the condition does not depend on the choice of $v_0$.}. We glue these cones to a cone stack 
$$
\Sigma_{\textbf{Ord}}' = \varinjlim_{(\Gamma,s,\kappa)} \sigma_{(\Gamma,s,\kappa)}.
$$
\begin{theorem}
The cone stack $\Sigma_{\textbf{Div}}'$ represents the functor $\Sigma_{\textbf{Div}}$, the cone stack $\Sigma_{\textbf{Ord}}'$ represents $\Sigma_{\textbf{Ord}}$, and the cone stack $\Sigma_{\textbf{Rub}}'$ represents $\Sigma_{\textbf{Rub}}$. 
\end{theorem}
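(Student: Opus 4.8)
The plan is to turn the statement into a Yoneda-type identification. A cone $\sigma$, regarded as a functor on monoids, represents $M \mapsto \Hom(S_\sigma,M)$, the set of monoid homomorphisms out of its dual monoid $S_\sigma$; and the cone stack $\Sigma'_{\textbf{Div}}=\varinjlim_{(\Gamma,s)}\sigma_{(\Gamma,s)}$ represents the corresponding $2$-colimit of these functors, indexed by $\textbf{CombDiv}$ with its face inclusions and automorphisms (and similarly for $\textbf{CombOrd}$, $\textbf{CombRub}$). So it suffices to check four things: (i) for each combinatorial type, the $M$-points of its cone are naturally a subset of the relevant functor; (ii) these identifications are compatible with the face maps attached to edge contractions; (iii) every object of the functor over $M$ arises from an interior point of some cone; and (iv) the automorphisms match, so that the $2$-colimit is neither too coarse nor too fine. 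A preliminary point I would dispatch first is that each of $\Sigma_{\textbf{Div}}$, $\Sigma_{\textbf{Ord}}$, $\Sigma_{\textbf{Rub}}$ is itself a stack on $\textbf{Mon}$, so that ``represented by a cone stack'' is a meaningful assertion; this is a routine descent check, the data in question --- a metrized graph, a piecewise linear function, an ordering, an equidimensional lift --- being local on the base monoid.

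For $\Sigma_{\textbf{Div}}$ the key bijection is the following. Given $(\Gamma,\alpha)\in\Sigma_{\textbf{Div}}(M)$, let $s=s_\alpha$ be the underlying flow; it is acyclic, so $(\Gamma,s)\in\textbf{CombDiv}$, and the length function $\ell=(\ell_e)_{e\in E(\Gamma)}$ is an $M$-point of $\mathbb{R}_{\ge 0}^{E(\Gamma)}$. The analysis of the lifting problem in Section~\ref{sec: tropical} shows that a piecewise linear function on $\Gamma$ with prescribed slopes $s$ exists over $M$ if and only if $\langle s,\gamma\rangle_\ell=0$ for all $\gamma\in H_1(\Gamma)$, i.e. exactly when $\ell\in\sigma_{(\Gamma,s)}(M)$; and under the normalization that $\alpha$ vanish on the leg vertex, such $\alpha$ is then unique. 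Conversely an $M$-point $\ell\in\sigma_{(\Gamma,s)}(M)$ with $\ell_e\neq 0$ for every edge $e$ --- a point in the interior of the cone --- recovers a metrized graph together with its unique normalized lift of $s$. If instead $\ell_e=0$ on some edges, those edges carry no metric and must be contracted: the resulting $(\Gamma,s)\to(\overline\Gamma,\overline s)$ is precisely a morphism of $\textbf{CombDiv}$ realizing the face inclusion $\sigma_{(\overline\Gamma,\overline s)}<\sigma_{(\Gamma,s)}$, and $\alpha$ descends since it is constant across a zero-length edge, so such boundary points are already accounted for in the colimit. Finally, an isomorphism in $\Sigma_{\textbf{Div}}(M)$ is an isomorphism of metrized graphs respecting $\alpha$, hence respecting $s$ and $\ell$; since interior points have all lengths nonzero, two such are isomorphic precisely through an automorphism of a single combinatorial type. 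This yields (i)--(iv) for $\textbf{Div}$, and naturality in $M$ is immediate from the path formula for $\alpha$.

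The cases $\Sigma_{\textbf{Ord}}$ and $\Sigma_{\textbf{Rub}}$ run on the same template; the only new content is reading off the extra defining relations. For $\textbf{Ord}$, given an ordering $\kappa$ extending $s$, the inequalities $\sum_{\vec{e}\in P_{v_0\to w}}s(\vec{e})\ell_e\le\sum_{\vec{e}\in P_{v_0\to w'}}s(\vec{e})\ell_e$ for $w\le w'$ say exactly that $\alpha(w)\le\alpha(w')$ in the partial order of $M$; on the interior these are strict, which forces the values of $\alpha$ to be totally ordered in the pattern prescribed by $\kappa$ and hence recovers $\kappa$ from $\alpha$, while the cycle relations $\langle s,\gamma\rangle_\ell=0$ give the independence of the base vertex $v_0$ noted in the footnote. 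This is the tropical reflection of Lemma~\ref{lem:ordering=equid}. For $\textbf{Rub}$, given a stable equidimensional lift $\Gamma'\to X$, the relation $s(e)\ell_e=\ell_f$ says precisely that $\Gamma'\to X$ is a morphism of $M$-metrized polyhedral complexes (slope times edge-length equals image edge-length), so an interior $M$-point produces a metrized $\Gamma$, a piecewise linear $\alpha$, the subdivision $\Gamma'$ with its metric, and a metrized equidimensional map to the tropical line $X$; combinatorial stability (every vertex of $X$ is the image of a vertex of $\Gamma$) translates on the interior into the condition $(\star)$, so that $\Gamma'=\mathsf{Eq}_{\Gamma}(\alpha)$ and the lift is the stable one. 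Since an object of $\Sigma_{\textbf{Rub}}(M)$ determines its minimal equidimensional model uniquely whenever one exists, the same four checks go through.

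The main obstacle I anticipate is the bookkeeping in (ii) and (iv): making precise that these $2$-colimits of cones reproduce exactly the groupoid-valued functors --- that distinct combinatorial types with isomorphic interior $M$-points are glued by, and only by, the morphisms of $\textbf{CombDiv}$, $\textbf{CombOrd}$, $\textbf{CombRub}$, and that the automorphism groups of combinatorial types are correctly incorporated into the stack structure. For $\textbf{Rub}$ there is an additional wrinkle: the subdivision $\Gamma'=\mathsf{Eq}_{\Gamma}(\alpha)$ is part of the recorded data and varies in families, so one must check that the face maps commute with passing between $\Gamma$, its subdivisions, and the minimal equidimensional model --- for instance that contracting an edge of $\Gamma$ and then forming $\mathsf{Eq}_{\Gamma}(\alpha)$ agrees with the image of $\mathsf{Eq}_{\Gamma}(\alpha)$ under that contraction. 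This compatibility is the tropical counterpart of the functoriality arguments in \cite{MarcusWise}, and is where I would expect essentially all of the real effort to go; everything else is the dictionary above.
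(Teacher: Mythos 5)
Your proposal follows the same route as the paper's proof: establish the bijection between a metrized graph $\Gamma$ with piecewise linear function (ordering, equidimensional lift) vanishing on the leg vertex and an $M$-point of $\sigma_{(\Gamma,s)}$ (resp.\ $\sigma_{(\Gamma,s,\kappa)}$, $\sigma_{(\Gamma,s,\Gamma'\to X)}$) via the twist condition $\langle s,\gamma\rangle_\ell=0$ and uniqueness of the normalized lift of $s$ to a piecewise linear function. The paper's proof is terser---it spells out the $\textbf{Div}$ case at the level of a single cone and declares the $\textbf{Ord}$ and $\textbf{Rub}$ cases ``similar''---while you additionally flag the face-map and automorphism compatibilities across the colimit and unpack the other two functors, all of which is consistent elaboration rather than a different argument.
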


\begin{proof}
Let $M$ be a cone, with $M^\vee = \sigma$. An element $\Sigma_{\textbf{Div}}(M)$ is a tropical curve $\Gamma$ metrized by $M$, together with a piecewise linear function $\alpha$ vanishing on the vertex containing the leg. This data defines a map  
\begin{align*}
\sigma \to \mathbb{R}_{\ge 0}^{E(\Gamma)} \\
x \to (\ell_e(x))
\end{align*}
and an underlying flow $s_\alpha:=s$. But the flow is a twist, and so 
$$
\left \langle s,
\gamma \right \rangle \in M^\gp
$$
is $0$. So the $\ell_e(x)$ map into $\sigma_{(\Gamma,s)}$. Conversely, a map $f: \sigma \to \sigma_{(\Gamma,s)}$ defines a metric on $\Gamma$, by taking $\ell_e \in M$ to be the composition
\[
\begin{tikzcd}
\sigma \ar[r,"f"] & \sigma_{(\Gamma,s)} \ar[r,"\textup{pr}_e"] & \mathbb{R}_{\ge 0}
\end{tikzcd}
\]
of $f$ with the $e$-th projection. The acyclic flow $s$ is a twist on $\Gamma$, since the lengths have been chosen so that the equations $\left \langle s,\gamma \right \rangle$ are satisfied. The twist only lifts to a piecewise linear function on $\Gamma$ up to translation by an element of $M^\gp$, but lifts uniquely if we assume that its value is $0$ on the marking containing the leg. This shows
$$
\Sigma_{\textbf{Div}} \cong \Sigma_{\textbf{Div}}'.
$$
The proofs for $\textbf{Ord},\textbf{Rub}$ are similar. 
\end{proof}

There are evident maps $\Sigma_{\textbf{Rub}} \to \Sigma_{\textbf{Ord}} \to \Sigma_{\textbf{Div}}$ obtained by forgetting the additional structure at each step.  

\begin{lemma}
The map $\Sigma_{\textbf{Ord}} \to \Sigma_{\textbf{Div}}$ is a subdivision. 
The map $\Sigma_{\textbf{Rub}} \to \Sigma_{\textbf{Ord}}$ is a a finite index inclusion. 
\end{lemma}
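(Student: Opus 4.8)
The plan is to verify both statements cone by cone, using the presentations $\Sigma_{\textbf{Div}}',\Sigma_{\textbf{Ord}}',\Sigma_{\textbf{Rub}}'$ and the correspondence of Lemma~\ref{lem:ordering=equid}.

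\emph{The map $\Sigma_{\textbf{Ord}}\to\Sigma_{\textbf{Div}}$.} Fix $(\Gamma,s)\in\textbf{CombDiv}$ and look at $\sigma_{(\Gamma,s)}$ together with the cones of $\Sigma_{\textbf{Ord}}$ over it, namely the $\sigma_{(\Gamma,s,\kappa)}$ for orderings $\kappa$ extending $s$; their faces come from edge contractions and are matched compatibly. By definition $\sigma_{(\Gamma,s,\kappa)}$ is the subset of $\sigma_{(\Gamma,s)}$ cut out by finitely many integral inequalities which, after unwinding the path sums $\sum_{\vec e\in P_{v_0\to w}}s(\vec e)\ell_e$, read $\alpha(w)\le\alpha(w')$ for $w\le_\kappa w'$, where $\alpha$ is the real twist-lift of $\ell$ normalized to vanish on the leg vertex. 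Hence $\sigma_{(\Gamma,s,\kappa)}$ is a rational polyhedral subcone carrying the integral structure induced from $\mathbb{Z}^{E(\Gamma)}$, which is exactly the one $\Sigma_{\textbf{Div}}$ assigns to $\sigma_{(\Gamma,s)}$. It then remains to see these subcones tile $\sigma_{(\Gamma,s)}$ face to face: every $\ell\in\sigma_{(\Gamma,s)}$ produces a well-defined $\alpha$ (the relations $\langle s,\gamma\rangle_\ell=0$, $\gamma\in H_1(\Gamma)$, are precisely what makes the lift path-independent), and since $\alpha$ is nondecreasing along every $s$-oriented edge the preorder $v\preceq w\iff\alpha(v)\le\alpha(w)$ is consistent with the partial order of $s$ — this is where acyclicity of $s$ enters — so it has a common linear extension $\kappa$, giving $\ell\in\sigma_{(\Gamma,s,\kappa)}$; on relative interiors the strict inequalities pin down $\kappa$ up to ties that do not change the cone, and distinct $\kappa,\kappa'$ have $\sigma_{(\Gamma,s,\kappa)}\cap\sigma_{(\Gamma,s,\kappa')}$ confined to a hyperplane $\alpha(w)=\alpha(w')$ for a pair they order oppositely. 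This is just the standard subdivision of $\sigma_{(\Gamma,s)}$ into ordering chambers (a slice of a type-$A$ arrangement); compatibility with the automorphisms of $(\Gamma,s)$, which may permute chambers, is harmless.

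\emph{The map $\Sigma_{\textbf{Rub}}\to\Sigma_{\textbf{Ord}}$.} By Lemma~\ref{lem:ordering=equid}, over a fixed $(\Gamma,s)$ stable equidimensional flows extending $s$ correspond bijectively to orderings $\kappa$ via $\kappa\mapsto(\Gamma_\kappa\to X_\kappa)$, so $\Sigma_{\textbf{Rub}}$ and $\Sigma_{\textbf{Ord}}$ are assembled from cones indexed by the same combinatorial data, and the morphism is realized cone-wise by $\sigma_{(\Gamma,s,\Gamma_\kappa\to X_\kappa)}\to\sigma_{(\Gamma,s,\kappa)}$, which pushes a length vector on $E(\Gamma_\kappa)$ forward along $\Gamma_\kappa\to\Gamma$ and discards the redundant $X_\kappa$-lengths $\ell_f=s(e)\ell_e$. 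I would show this cone-wise map is an isomorphism of rational polyhedral cones whose underlying map of (full-rank) integral structures $\Lambda_{\textbf{Rub}}\to\Lambda_{\textbf{Ord}}$ is injective with finite cokernel. Bijectivity of the underlying real cones is a direct check: on $\sigma_{(\Gamma,s,\Gamma_\kappa\to X_\kappa)}$ every length of $\Gamma_\kappa$ and of $X_\kappa$ is determined by the $X_\kappa$-edge lengths, i.e. by the consecutive gaps $\gamma_{i+1}-\gamma_i$ of the values of $\alpha$, and these are recovered from a point of $\sigma_{(\Gamma,s,\kappa)}$ as differences of the numbers $\sum_{\vec e\in P_{v_0\to w}}s(\vec e)\ell_e$; one verifies this is inverse to the pushforward. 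Since the map is then a linear isomorphism of vector spaces and both lattices have full rank, the induced map $\Lambda_{\textbf{Rub}}\to\Lambda_{\textbf{Ord}}$ is automatically injective with finite cokernel; concretely the index is forced by the slopes $s(\vec e)$, as passing a length $\ell_{e'}$ of an edge of $\Gamma_\kappa$ to the matching $X_\kappa$-edge multiplies by $s(e')$ while the inverse divides by it, so only a finite-index sublattice of $\Lambda_{\textbf{Ord}}$ lifts. Injectivity on supports of $\Sigma_{\textbf{Rub}}\to\Sigma_{\textbf{Ord}}$ follows from the combinatorial bijection, so the morphism is the asserted finite-index inclusion.

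\emph{Main obstacle.} The delicate point is the lattice bookkeeping in the second statement: one must track precisely how the slopes of $s$ rescale edge lengths under $\Gamma_\kappa\to\Gamma$ and under $\Gamma_\kappa\to X_\kappa$, and confirm that the resulting rational linear isomorphism of cones carries $\Lambda_{\textbf{Rub}}$ onto a genuine finite-index sublattice of $\Lambda_{\textbf{Ord}}$ — in particular that the two cone complexes really do have the same support, and that the extra data in $\Sigma_{\textbf{Rub}}$ over $\Sigma_{\textbf{Ord}}$ is exactly the integrality obstruction and nothing more. The subdivision statement, once the dictionary between edge lengths and $\alpha$-values is in place, is a routine verification whose only essential input is the acyclicity of $s$.
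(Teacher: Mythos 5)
Your proof is correct, but it takes a more explicit, cone-by-cone route than the paper. The paper's argument is essentially two lines and functorial: both maps are monomorphisms of cone stacks, so one applies the standard criterion that a monomorphism of this kind is a subdivision if and only if it is bijective on $\mathbb{N}$-points, and (given the combinatorial bijection of Lemma~\ref{lem:ordering=equid}, which identifies the cones of $\Sigma_{\textbf{Rub}}$ and $\Sigma_{\textbf{Ord}}$ lying over a fixed $(\Gamma,s)$) a finite index inclusion if it is bijective on $\mathbb{R}_{\geq 0}$-points. For the first, a piecewise linear function with values in $\mathbb{Z}$ is automatically totally ordered, so $\Sigma_{\textbf{Ord}}(\mathbb{N})\to\Sigma_{\textbf{Div}}(\mathbb{N})$ is a bijection. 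For the second, over $\mathbb{R}_{\geq 0}$ a totally ordered $\alpha$ always lifts to a stable equidimensional map by subdividing $\Gamma$ along the preimages of the vertices of the tropical line $X$ built from the distinct values $\alpha(v)$ --- with subdivision points living in $M_{\mathbb{Q}}^{\gp}$ rather than $M^{\gp}$ in general (Remark~\ref{rem:rootnecessary}), which is exactly why the map is not a subdivision. Your chamber-tiling and lattice-bookkeeping argument is the hands-on incarnation of the same mechanism; it buys explicit control over the sublattice index (which the paper defers to Corollary~\ref{cor: sublatticestructure} and the Algorithms section) at the cost of more verification.

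One slip worth flagging in your second paragraph: you assert that on $\sigma_{(\Gamma,s,\Gamma_\kappa\to X_\kappa)}$ every length of $\Gamma_\kappa$ is determined by the $X_\kappa$-edge lengths. This is false for the contracted edges of $\Gamma_\kappa$, i.e.\ those on which $\alpha$ has slope zero; they map to a single vertex of $X_\kappa$ and their lengths remain free parameters. The correct statement --- which the paper records in the lemma immediately following --- is that the cone is isomorphic to $\mathbb{R}_{\geq 0}^{E(X)}\times\mathbb{R}_{\geq 0}^{E^c(\Gamma)}$, and the inverse you build must carry the contracted edge lengths along unchanged in addition to reconstructing the gaps $\gamma_{i+1}-\gamma_i$. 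This does not affect your conclusion, but a careful write-up would need the repair.
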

\begin{proof}
We first look at $\Sigma_{\textbf{Ord}} \to \Sigma_{\textbf{Div}}$. It is clear that the map is a monomorphism, so it suffices to show that the map is bijective on $\mathbb{N}$ points. Given a tropical curve $\Gamma$ with integer lengths, and an $\alpha: \Gamma \to \mathbb{R}$, the values of $\alpha$ are elements of $\mathbb{Z}$, so automatically totally ordered. 

For the map $\Sigma_{\textbf{Rub}} \to \Sigma_{\textbf{Ord}}$, it suffices to show that the map is bijective on $\mathbb{R}_{\ge 0}$ points. We start with an $\alpha$ and an ordering $\kappa$ of its values, and build a tropical line $X$ by taking one vertex $v_i$ for each distinct value $\alpha(v)$, with the ordering of $\kappa$. We define an embedding $\iota_X: X \to \mathbb{R}$ by 
$$
\iota_X(v_i) = \textup{ value of corresponding } \alpha(v).
$$
There is a map of topological spaces $\Gamma \to X$, but does not respect cell structures, as interior points of edges of $\Gamma$ map to vertices of $X$. We refine $\Gamma$ to $\Gamma'$ obtained by subdividing along the preimages of the vertices of $X$. This gives a stable equidimensional PL function on $\alpha$. 
\end{proof}

\begin{remark}
\label{rem:rootnecessary}
The argument in the proof essentially shows how the finite index inclusion in $\Sigma_{\textbf{Rub}} \to \Sigma_{\textbf{Ord}}$ arises. When subdividing $\Gamma$ to $\Gamma'$, the new points may have rational coordinates; for instance, if an edge $e$ from $v$ to $w$ has slope $s(e)$, and $\alpha(u)$ is an intermediate value between $\alpha(v)$ and $\alpha(w)$, the function $\alpha$ hits $\alpha(u)$ at the point 
$$
\frac{\alpha(u)-\alpha(v)}{s(e)}
$$
of $e$, and thus $e$ needs to be subdivided there. This point is in $M_{\mathbb{Q}}^\gp$, but not necessarily in $M^\gp$. 
\end{remark}

\begin{lemma}
The cone stack $\Sigma_{\textbf{Ord}}$ is simplicial and the cone stack $\Sigma_{\textbf{Rub}}$ is smooth.  
\end{lemma}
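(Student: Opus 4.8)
The plan is to verify both assertions cone by cone: a cone stack presented as a colimit of cones --- as $\Sigma_{\textbf{Ord}}$ and $\Sigma_{\textbf{Rub}}$ are, by the theorem above --- is simplicial, respectively smooth, as soon as every cone in the presentation is, so it is enough to analyse a single $\sigma_{(\Gamma,s,\kappa)}$ and a single $\sigma_{(\Gamma,s,\Gamma'\to X)}$. In both cases the defining equations are essentially triangular, and the strategy is the same: solve them, recognise the underlying real cone as an orthant, and then keep track of the integral structure induced from the ambient lattice. One may assume $s\neq 0$, the case $s=0$ being trivial.

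I would treat $\Sigma_{\textbf{Rub}}$ first, as the more delicate half. Here $\sigma_{(\Gamma,s,\Gamma'\to X)}\subset\mathbb R_{\ge 0}^{E(\Gamma')}\times\mathbb R_{\ge 0}^{E(X)}$ is cut out by the equations $s(e)\,\ell_e=\ell_{f(e)}$, one for each non-contracted edge $e$ of $\Gamma'$ (oriented so $s(e)>0$), where $f(e)$ is the edge of $X$ onto which $e$ maps. The first point to record is that these are the only constraints: each encodes the rise of $\alpha$ across $e$, and since $X$ is a path --- hence contractible --- the image of any loop of $\Gamma'$ has zero total displacement, so the equations already force $\langle s,\gamma\rangle_\ell=0$ for all $\gamma\in H_1(\Gamma')$. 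Each equation eliminates one non-contracted coordinate via $\ell_e=\ell_{f(e)}/s(e)$, so projection to the coordinates $(\ell_f)_{f\in E(X)}$ and $(\ell_e)_{e\text{ contracted}}$ identifies the underlying real cone with the orthant $\mathbb R_{\ge 0}^{E(X)\sqcup E_{\mathrm{contr}}(\Gamma')}$. Smoothness then amounts to exhibiting the primitive ray generators as a lattice basis of $N=\operatorname{span}(\sigma)\cap(\mathbb Z^{E(\Gamma')}\times\mathbb Z^{E(X)})$: I would show that the integral points of the span are exactly those with $\ell_f\in d_f\mathbb Z$ for every $f$, where $d_f:=\mathrm{lcm}\{s(e):f(e)=f\}$ (this being precisely the condition making the eliminated $\ell_e$ integral), together with integral $\ell_e$ on the contracted edges; the primitive generator of the $f$-ray is then the vector with $\ell_f=d_f$ and $\ell_e=d_f/s(e)$ over $f$, those of the contracted rays are standard basis vectors, and these visibly form a $\mathbb Z$-basis of $N$. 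Hence every cone of $\Sigma_{\textbf{Rub}}$ is smooth.

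For $\Sigma_{\textbf{Ord}}$ I would deduce simpliciality from the previous lemma: $\Sigma_{\textbf{Rub}}\to\Sigma_{\textbf{Ord}}$ is a finite index inclusion, hence (being bijective on $\mathbb R_{\ge 0}$-points and, by the correspondence of Lemma \ref{lem:ordering=equid}, carrying $\sigma_{(\Gamma,s,\Gamma_\kappa\to X_\kappa)}$ isomorphically onto $\sigma_{(\Gamma,s,\kappa)}$) an isomorphism of underlying real cone complexes; since simpliciality of a cone depends only on its real cone and smooth cones are simplicial, every cone of $\Sigma_{\textbf{Ord}}$ is simplicial. One can also see this directly: writing $w_0<\dots<w_m$ for the $\kappa$-ordered vertices of the non-contracted edges and $h(w_i)=\sum_{\vec e\in P_{w_0\to w_i}}s(\vec e)\ell_e$, the cone $\sigma_{(\Gamma,s,\kappa)}$ is identified --- via the gap coordinates $t_i=h(w_i)-h(w_{i-1})$ together with the contracted edge lengths --- with an orthant, since a non-contracted edge $e=[w_i,w_j]$ satisfies $\ell_e=\frac1{s(e)}\sum_{i<p\le j}t_p$. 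Here, though, the induced lattice is in general only a proper finite index sublattice of the standard one --- exactly because those formulas for $\ell_e$ need not be integral --- consistently with $\Sigma_{\textbf{Ord}}$ being only simplicial and not smooth.

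I expect the real obstacle to be the lattice bookkeeping for $\Sigma_{\textbf{Rub}}$, namely checking both that the proposed ray generators are primitive and that they generate all of $N$ rather than a proper finite index sublattice. The surrounding checks --- that the equations $s(e)\ell_e=\ell_{f(e)}$ do imply the $H_1$-conditions, that every edge of $X$ is hit by some edge of $\Gamma'$, and that the stability condition $(\star)$ holds identically in the lengths (because every vertex of $X$ is the image of a non-exceptional vertex, so an exceptional vertex of $\Gamma'$ carries the $\alpha$-value of a non-exceptional one regardless of the lengths) --- are routine, but are needed so that the real-cone identifications genuinely take place inside the cones in question.
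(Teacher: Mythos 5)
Your argument is correct and follows the same route as the paper: solve the defining equations $s(e)\ell_e = \ell_f$ to identify the underlying real cone with an orthant on the coordinates $\ell_f$, $f\in E(X)$, together with the contracted-edge lengths, then check the integral structure, and finally deduce simpliciality of $\Sigma_{\textbf{Ord}}$ from the rational isomorphism of its cones with those of $\Sigma_{\textbf{Rub}}$. Your lattice bookkeeping via the multipliers $d_f$ is more explicit than the paper's one-line assertion that $\sigma_{(\Gamma,s,\Gamma'\to X)}\cong \mathbb{N}^{E(X)}\times\mathbb{N}^{E^c(\Gamma)}$, but the content is the same.
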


\begin{proof}
It suffices to check that the cones $\sigma_{(\Gamma,s,\Gamma' \to X)}$ of $\Sigma_{\textbf{Rub}}$ are isomorphic to $\mathbb{N}^k$ for some $k$. But the equations 
$$
s(e)\ell_e = \ell_f
$$
show that the cone is in fact isomorphic to 
$$
\mathbb{N}^{E(X)} \times \mathbb{N}^{E^c(\Gamma)}
$$
where $E^c(\Gamma)$ denotes the set of contracted edges of $\Gamma$. The fact that $\Sigma_{\textbf{Ord}}$ is simplicial follows from the fact that its cones are isomorphic to those of $\Sigma_{\textbf{Rub}}$ after tensoring with $\mathbb{Q}$. 
\end{proof}

In fact, more can be said. From the isomorphism of the real points of a cone in $\Sigma_{\textbf{Ord}}$ with $\mathbb{R}_{\ge 0}^{E(X)} \times \mathbb{R}_{\ge 0}^{E^c(\Gamma)}$ it follows that the rays -- i.e. the one dimensional cones -- in $\Sigma_{\mathbf{Ord}}$ are precisely those that parametrize maps $\alpha$ consisting of 
\begin{itemize}
\item Either a single contracted edge. 
\item Or, maps without contracted components to a target with exactly one edge. 
\end{itemize}
In either case, the map $\alpha$ automatically lifts to an equidimensional one. Thus, the sublattice structure of $\Sigma_{\textbf{Rub}}$ agrees with that of $\Sigma_{\textbf{Ord}}$ on rays. Since $\Sigma_{\textbf{Ord}}$ is simplicial, we obtain  

\begin{corollary}
\label{cor: sublatticestructure}
Let $\sigma_{(\Gamma,s,\kappa)}$ be a cone in $\Sigma_{\textbf{Ord}}$, and $\sigma_{(\Gamma,s,\Gamma' \to X)}$ the corresponding cone in $\Sigma_{\textbf{Rub}}$. The lattice of $\sigma_{(\Gamma,s,\Gamma' \to X)}$ is the lattice freely generated by the primitive vectors along the rays of $\sigma_{(\Gamma,s,\kappa)}$.
\end{corollary}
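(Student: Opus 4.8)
The plan is to reduce the statement to three ingredients that are already available: the map $\Sigma_{\textbf{Rub}} \to \Sigma_{\textbf{Ord}}$ is a finite index inclusion (so a cone and its image have the same underlying real cone, hence the same face poset, in particular the same rays), the cones of $\Sigma_{\textbf{Rub}}$ are smooth, and $\Sigma_{\textbf{Ord}}$ is simplicial. Write $\sigma = \sigma_{(\Gamma,s,\kappa)}$ with lattice $N_\sigma$ and $\sigma' = \sigma_{(\Gamma,s,\Gamma' \to X)}$ with lattice $N_{\sigma'}$, and let $u_1, \dots, u_k \in N_\sigma$ be the primitive generators of the rays $\rho_1, \dots, \rho_k$ of $\sigma$; here $k = |E(X)| + |E^c(\Gamma)|$ by the count in the proof that $\Sigma_{\textbf{Rub}}$ is smooth.

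The argument then runs as follows. First, since the defining equations $s(e)\ell_e = \ell_f$ identify $(\sigma', N_{\sigma'})$ with $\mathbb{N}^{E(X)} \times \mathbb{N}^{E^c(\Gamma)}$ together with its standard lattice, the lattice $N_{\sigma'}$ is \emph{freely} generated by the primitive generators of the rays $\rho_1', \dots, \rho_k'$ of $\sigma'$. Second, because $\sigma' \to \sigma$ is a bijection on $\mathbb{R}_{\ge 0}$-points it induces an isomorphism of face posets, so the rays $\rho_i'$ of $\sigma'$ correspond bijectively to the rays $\rho_i$ of $\sigma$. Third, I would invoke the discussion preceding the statement: each ray of $\Sigma_{\textbf{Ord}}$ parametrizes a PL function which is either supported on a single contracted edge or has no contracted component and maps to a one-edge tropical line, and in either case the equidimensional lift requires no further subdivision, so the ray already sits inside $\Sigma_{\textbf{Rub}}$ with the \emph{same} integral points. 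Consequently the primitive generator of $\rho_i'$ with respect to $N_{\sigma'}$ equals the primitive generator $u_i$ of $\rho_i$ with respect to $N_\sigma$.

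Combining these three points, $N_{\sigma'} = \mathbb{Z} u_1 \oplus \cdots \oplus \mathbb{Z} u_k$, which is exactly the lattice freely generated by the primitive vectors along the rays of $\sigma_{(\Gamma,s,\kappa)}$, as claimed. I expect the only delicate step to be the appeal to simpliciality of $\Sigma_{\textbf{Ord}}$: the ray-by-ray comparison only pins down $N_{\sigma'}$ because $\sigma$ is simplicial, so that a sublattice of $N_\sigma \otimes \mathbb{Q}$ which is smooth on $\sigma$ is determined by its behavior along the rays. If $\sigma$ were not simplicial this fails, and indeed the statement would be false; making this reduction precise — together with checking that the face-poset identification genuinely matches the smooth ray generators of $\sigma'$ with the primitive generators of $\sigma$ — is the substantive content, the rest being bookkeeping.
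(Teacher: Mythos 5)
Your proof is correct and follows essentially the same route as the paper's: smoothness of the $\textbf{Rub}$-cone forces its lattice to be generated by its ray primitives, and the discussion preceding the corollary shows those ray primitives agree with the $\textbf{Ord}$-primitives. One small remark: the concluding worry about simpliciality is a slight overcomplication — smoothness of $\sigma'$ already implies $\sigma$ (same real cone) is simplicial and directly gives the lattice as the free span of the ray primitives, so no separate appeal to simpliciality of $\Sigma_{\textbf{Ord}}$ is needed beyond what smoothness of $\Sigma_{\textbf{Rub}}$ supplies.
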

\begin{proof}
Since $\sigma_{(\Gamma,s,\Gamma' \to X)}$ is smooth, i.e. isomorphic to some $\mathbb{N}^k$, its lattice must be the lattice generated by the primitive vectors along its rays. Since those primitive vectors are the same as the primitive vectors of $\Sigma_{\textbf{Ord}}$, the conclusion follows. 
\end{proof}

 Thus, $\Sigma_{\textbf{Ord}}$ is in a certain sense a coarse moduli space of $\Sigma_{\textbf{Rub}}$: see e.g. \cite[Subsection 3.2]{GMtor}.

\subsection{Carving out Small Subcomplexes}
The complexes $\Sigma_{\textbf{Div}},\Sigma_{\textbf{Rub}},\Sigma_{\textbf{Ord}}$ are very large: they are indexed by additional data over the category of all graphs, which is itself notoriously large. When algebraizing, as we will in the next section, the resulting schemes have infinitely many connected components and are highly non-separated. Here, we want to impose several increasingly stringent conditions that carve out subcomplexes which algebraize to much more pleasant spaces.

First, we can as usual restrict our attention to genus $g$, $n$-marked graphs. Writing fixed number of legs $h_1,\cdots,h_n$ on our graphs; we may then write 

$$
\Sigma_{\textbf{Div}_n}
$$
which decomposes 
$$
\Sigma_{\textbf{Div}} = \coprod_{n \ge 1} \Sigma_{\textbf{Div}_{n}}.
$$
Similar descriptions are available for the order and rubber versions. 

Given an $n$-marked graph $\Gamma$, we can consider $n+1$-marked graphs $\Gamma_{c}$, one for every cell $c$ of $\Gamma$ (vertex or edge), obtained by attaching a leg $h_{n+1}$ on $c$ when it is a vertex, and a bivalent vertex with a single leg $h_{n+1}$ on $c$ when it is an edge. A map $\Gamma_{c} \to \Gamma$ is obtained by deleting the latter vertex. There is a map 
$$
\mathbb{R}_{\ge 0}^{E(\Gamma_c)} \to \mathbb{R}_{\ge 0}^{E(\Gamma)}
$$
which is an isomorphism when $c=v$ is a vertex, and which is the fiber product 
\[
\begin{tikzcd}
\mathbb{R}_{\ge 0}^{E(\Gamma_c)} \ar[r] \ar[d] & \mathbb{R}_{\ge 0}^2 \ar[d,"+"] \\ \mathbb{R}_{\ge 0}^{E(\Gamma)} \ar[r,"\ell_e"] & \mathbb{R}_{\ge 0}
\end{tikzcd}
\]
when $e$ is an edge. Given a piecewise linear function $\alpha$ on $\Gamma$, with underlying flow $s=s_\alpha$, it lifts canonically to a piecewise linear function on $\Gamma_c$, by not changing the slopes if $e$ has been subdivided. The union of the cones 

$$
\sigma_{(\Gamma,s,n+1)} \subset \Sigma_{\mathbf{Div}_{n+1}}
$$
forms a subcomplex with a forgetful map to $\sigma_{(\Gamma,s,n)}$.
\begin{lemma}
The universal family of $\Sigma_{\mathbf{Div}_{n}}$ restricts to 
$$
\varinjlim_{c \in V(\Gamma) \cup E(\Gamma)} \sigma_{(\Gamma_c,s,n+1)}
$$
over $\sigma_{(\Gamma,s,n)}$.
\end{lemma}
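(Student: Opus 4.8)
The claim is that the universal curve over $\Sigma_{\mathbf{Div}_n}$, restricted over a fixed cone $\sigma_{(\Gamma,s,n)}$, is given by the colimit $\varinjlim_{c} \sigma_{(\Gamma_c,s,n+1)}$ over the cells $c$ of $\Gamma$. The strategy is to recognize both sides as cone stacks representing the same functor on $\mathbf{Mon}$, and invoke the representability theorem already proved for $\Sigma_{\mathbf{Div}}$. The universal curve over a moduli space of tropical curves is, by general nonsense, the space whose $M$-points are pairs (a tropical curve $\Gamma$ metrized by $M$ with its PL function, plus a point of $\Gamma$), where a point of a tropical curve means either a vertex or a point in the interior of an edge, i.e. a subdivision point. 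So I would first set up the functor $\Sigma^{\mathrm{univ}}_{\mathbf{Div}_n}$ whose value on $M$ is $\{(\Gamma,\alpha,p)\}$ with $p$ a point of $\Gamma$, and note that this is exactly what ``universal family'' means in this cone-stack setting, tautologically.

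**Key steps.** First I would fix the combinatorial type: over the cone $\sigma_{(\Gamma,s,n)}$, a point of $\Gamma$ lies either on a vertex $v$ (a discrete choice, no extra parameters) or in the interior of an edge $e$ (which introduces a bivalent subdivision vertex and splits the length $\ell_e$ into two nonnegative summands). This is precisely the data recorded by the auxiliary graph $\Gamma_c$: for $c = v$ a vertex, $\Gamma_v$ just carries the extra leg $h_{n+1}$ and $E(\Gamma_v) = E(\Gamma)$, so $\sigma_{(\Gamma_v,s,n+1)} \cong \sigma_{(\Gamma,s,n)}$; for $c = e$ an edge, $\Gamma_e$ has the edge $e$ subdivided, the flow $s$ lifts by keeping slopes constant across the subdivision point, and $E(\Gamma_e)$ replaces $\ell_e$ by two coordinates summing to $\ell_e$ — which is exactly the fiber-product square displayed just before the lemma. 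Second, I would check that the cycle relations $\langle s,\gamma\rangle_\ell = 0$ defining $\sigma_{(\Gamma_e,s,n+1)}$ are equivalent to those defining $\sigma_{(\Gamma,s,n)}$: subdividing an edge does not change $H_1$, and the contribution of the two half-edges to any cycle pairing equals the contribution of the original edge since the slope is unchanged, so the defining equations match up and the map $\sigma_{(\Gamma_e,s,n+1)} \to \sigma_{(\Gamma,s,n)}$ is exactly the base change along $\ell_e$ of the addition map $\mathbb{R}_{\ge 0}^2 \to \mathbb{R}_{\ge 0}$. Third, I would verify that the face maps among the $\Gamma_c$ (as $c$ varies, and as one contracts edges of $\Gamma$) assemble the cones $\sigma_{(\Gamma_c,s,n+1)}$ into a subcomplex, and that the resulting colimit carries the universal PL function $\alpha$ lifted from $\Gamma$ — which the paragraph preceding the lemma already spells out (lift $\alpha$ to $\Gamma_c$ without changing slopes). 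Finally I would conclude that $\varinjlim_c \sigma_{(\Gamma_c,s,n+1)}$ represents the restriction of $\Sigma^{\mathrm{univ}}_{\mathbf{Div}_n}$ over $\sigma_{(\Gamma,s,n)}$, by the same argument as in the representability theorem: an $M$-point on either side is a metric on the relevant graph compatible with the twist equations, and the vertex/edge dichotomy for the marked point $p$ matches the index set $\{c \in V(\Gamma)\cup E(\Gamma)\}$.

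**Main obstacle.** The only genuine subtlety — everything else is bookkeeping — is checking that the gluing of the cones $\sigma_{(\Gamma_c,s,n+1)}$ along their faces really reproduces the universal curve as a cone stack and not some coarser or finer object: one must be careful that when a point $p$ in the interior of an edge $e$ degenerates to an endpoint $v$ of $e$ (i.e. one of the two new length coordinates goes to $0$), the corresponding face of $\sigma_{(\Gamma_e,s,n+1)}$ is identified with $\sigma_{(\Gamma_v,s,n+1)}$ and not with something else, and that these identifications are consistent around each cone so the colimit is well-defined. This amounts to matching the combinatorial boundary structure of ``$\Gamma$ with a marked point'' against the face poset of the $\Gamma_c$'s, which is straightforward once one writes it out but is the step where an error would hide. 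I would handle it by observing that both posets are canonically identified with the poset of pairs (an edge-contraction of $\Gamma$, a cell of the contracted graph), with the flow and PL data transported along, and that the cone assignment is functorial for this poset — at which point the colimit description is forced.
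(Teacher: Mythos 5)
Your argument is essentially the paper's proof: both identify an $M$-point of the universal family as a triple $(\Gamma,\alpha,p)$ with $p$ a vertex or interior edge point, both use the dichotomy over cells $c\in V(\Gamma)\cup E(\Gamma)$ and the fact that $\alpha$ lifts to $\Gamma_c$ with unchanged slopes, and both read off the fiber-product square giving $\sigma_{(\Gamma_e,s,n+1)}$. Your explicit check that subdividing an edge preserves $H_1(\Gamma)$ and the pairing equations $\langle s,\gamma\rangle_\ell=0$ is a useful detail that the paper leaves implicit in the phrase ``lifts canonically,'' but it does not constitute a different route.
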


\begin{proof}
Let $\mathcal{C} \to \Sigma_{\textbf{Div}_n}$ be the universal family, and $\sigma$ a cone. A map $\sigma \to \mathcal{C}$ is a map $\sigma \to \Sigma_{\textbf{Div}_n}$, together with a section of 
$$
\mathcal{C} \times_{\Sigma_{\mathbf{Div}_n}} \sigma \to \sigma.
$$
In other words, it consists of a tropical curve $\Gamma$ metrized by $M = \sigma^\vee$, a piecewise linear function $\alpha$, and a section of $\Gamma$. The section is a point of $\Gamma$, which is either a vertex or an edge. Sections that land in $e$ are in bijection with subdivisions $\Gamma'$ of $\Gamma$ whose underlying graph is $\Gamma_e$, and are thus parametrized by the choice of lengths of the two pieces of $e$ determined by the section. Since $\alpha$ lifts canonically to $\Gamma'$ by not altering the slopes, the triples $(\Gamma,\alpha,\textup{section through } e)$ are in bijection with maps 
$$
\sigma \to \sigma_{(\Gamma_e,s,n+1)}
$$
as claimed. 
\end{proof}

The analogous result does not hold for $\Sigma_{\textbf{Ord}}$. The reason is that an ordering $\kappa$ does not lift canonically to an ordering to the universal curve. All that can be said is that the universal curve of $\Sigma_{\textbf{Ord}}$ is the pullback of the universal curve of $\Sigma_{\textbf{Div}}$.

The result is also \emph{false} for $\Sigma_{\textbf{Rub}}$, but a curious intermediate statement can be obtained. Points of $\Sigma_{\textbf{Rub}}$ have more structure, contained in the map $\Gamma' \to X$. Suppose $\kappa$ is the induced ordering. The section of the universal curve in particular factors through some cell $\Gamma'_c$ of $\Gamma'$ now, and thus the ordering $\kappa$ lifts canonically to $\Gamma'_c$: points in any cell of $\Gamma'$ are in a unique order relative to $\alpha$. It follows that the universal curve of $\Sigma_{\textbf{Rub}_n}$ factors through $\Sigma_{\textbf{Ord}_{n+1}} \to \Sigma_{\textbf{Div}_{n+1}}$. On the other hand, it does not necessarily factor through $\Sigma_{\textbf{Rub}_{n+1}}$. The reason is that while $\alpha$ and the total ordering lift canonically to $\Gamma'_c$, the equidimensional lift 
$$
\Gamma' \to X
$$
does not. The induced map 
$$
\Gamma'_{c} \to X
$$
is no longer equidimensional. Further subdivision of $X$  and consequently of $\Gamma'_c$ is required, which may require extracting additional roots, as in \ref{rem:rootnecessary}. Nevertheless, the argument suffices to show 

\begin{lemma}
\label{lem:univfamsimplicial}
The universal family of $\Sigma_{\textbf{Rub}}$ is simplicial. 
\end{lemma}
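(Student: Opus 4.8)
\textbf{Proof plan for Lemma \ref{lem:univfamsimplicial}.}

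The goal is to show that the universal family of $\Sigma_{\textbf{Rub}}$ is simplicial, i.e. that every cone of the universal family is simplicial (isomorphic to a simplicial cone over $\mathbb{Q}$, though not necessarily smooth). The plan is to leverage the structure already extracted in the preceding discussion: the universal family of $\Sigma_{\textbf{Rub}_n}$ factors through $\Sigma_{\textbf{Ord}_{n+1}} \to \Sigma_{\textbf{Div}_{n+1}}$. Since $\Sigma_{\textbf{Ord}}$ has already been shown to be simplicial in the earlier lemma, it suffices to prove that the cones of the universal family, viewed as cones over cones of $\Sigma_{\textbf{Rub}_n}$, are simplicial cones; and the factorization through $\Sigma_{\textbf{Ord}_{n+1}}$ immediately places each such cone inside a simplicial cone of $\Sigma_{\textbf{Ord}_{n+1}}$ — so it remains to see that the cones of the universal family are themselves the full cones appearing in $\Sigma_{\textbf{Ord}_{n+1}}$ (or faces thereof), hence simplicial.

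Concretely, I would argue as follows. Fix a cone $\sigma_{(\Gamma,s,\Gamma'\to X)}$ of $\Sigma_{\textbf{Rub}_n}$. By the preceding paragraph's analysis, a section of the universal curve over this cone factors through a cell $c$ of $\Gamma'$, and the ordering $\kappa$ induced by $\Gamma' \to X$ lifts canonically to the subdivided graph $\Gamma'_c$: points of a fixed cell of $\Gamma'$ have a well-defined position in the total order relative to $\alpha$. This produces a point of $\Sigma_{\textbf{Ord}_{n+1}}$, i.e. a map from the relevant cone of the universal family into a cone $\sigma_{(\Gamma'_c, s, \kappa')}$ of $\Sigma_{\textbf{Ord}_{n+1}}$. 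One checks, exactly as in the proof that the universal family of $\Sigma_{\textbf{Div}_n}$ restricts to $\varinjlim_c \sigma_{(\Gamma_c, s, n+1)}$, that this identification is a bijection on points: a section through a cell $c$ of $\Gamma'$ is the same data as a choice of lengths of the pieces into which $c$ is cut, subject to no new linear equations beyond those already imposed on $\sigma_{(\Gamma,s,\Gamma'\to X)}$ and the ordering inequalities that define the corresponding $\Sigma_{\textbf{Ord}}$ cone. Hence each cone of the universal family of $\Sigma_{\textbf{Rub}}$ is identified with a cone of $\Sigma_{\textbf{Ord}_{n+1}}$, which is simplicial by the earlier lemma; therefore the universal family of $\Sigma_{\textbf{Rub}}$ is simplicial.

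The main obstacle, and the place I would be most careful, is verifying that no \emph{new} equations or non-simplicial relations are introduced when we subdivide $\Gamma'$ at the section point and pass to $\Gamma'_c$ — in other words, that the cone parametrizing $(\Gamma' \to X, \text{section through } c)$ really is cut out from $\mathbb{R}_{\ge 0}^{E(\Gamma'_c)} \times \mathbb{R}_{\ge 0}^{E(X)}$ by the \emph{same} flow-equations $s(e)\ell_e = \ell_f$ (now also for the two halves of a subdivided edge $c$, whose slopes agree with the original) together with the positivity/ordering conditions, and nothing else. The subtlety flagged in Remark \ref{rem:rootnecessary} — that subdividing may force the extraction of roots — affects only the \emph{lattice} (the integral structure), not the underlying rational cone, so it does not disturb simpliciality; I would make this point explicit. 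Once the cone of the universal family is recognized as (a face of) a cone of the simplicial complex $\Sigma_{\textbf{Ord}_{n+1}}$, simpliciality is immediate, and the freeness of the sublattice structure on rays from Corollary \ref{cor: sublatticestructure} is not even needed here — only the weaker simplicial statement is.
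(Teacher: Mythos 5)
Your proposal is essentially the paper's argument, unpacked and made a bit more careful. The paper's proof is exactly the discussion in the paragraph preceding the lemma: the section of the universal curve lands in a cell of $\Gamma'$ (not merely $\Gamma$), the total ordering $\kappa$ therefore lifts canonically to $\Gamma'_c$, so the universal curve of $\Sigma_{\textbf{Rub}_n}$ factors through $\Sigma_{\textbf{Ord}_{n+1}}$, whose cones were shown simplicial in the preceding lemma; the paper then just says ``the argument suffices to show'' the statement. You correctly identify that the one point the paper leaves implicit — that the cones of the universal family are actually \emph{identified} with cones of $\Sigma_{\textbf{Ord}_{n+1}}$ after passing to the rational cone, not merely mapped into them — is the crux, and that this is checked as in the proof of the restriction lemma for $\Sigma_{\textbf{Div}_n}$. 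Your remark about roots (Remark~\ref{rem:rootnecessary}) is slightly off-target — those roots concern the finite-index inclusion $\Sigma_{\textbf{Rub}} \to \Sigma_{\textbf{Ord}}$ and the obstruction to factoring the universal family through $\Sigma_{\textbf{Rub}_{n+1}}$ rather than $\Sigma_{\textbf{Ord}_{n+1}}$, which plays no role once one aims only at $\Sigma_{\textbf{Ord}_{n+1}}$ — but your conclusion (lattice issues are irrelevant to the simplicial, as opposed to smooth, property) is correct and harmless. Overall this is the same argument as the paper's, stated with a welcome degree of extra explicitness.
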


\begin{remark}
To get a smooth universal family, one can work instead with an alternative stack $\Sigma_{\textbf{AF}}$\footnote{The acronym stands for ``Abramovich-Fantechi".} parametrizing orbifold tropical curves. We do not introduce this here as $\Sigma_{\textbf{Rub}}$ is good enough for our purposes. 
\end{remark}

\begin{figure}
\caption{Above, a point of $\Sigma_\textbf{Ord}$, representing a piecewise linear function $\alpha$ with $\alpha(v)<\alpha(u)<\alpha(w)$, and a lift to its universal family. The relation between $\alpha(u)$ and $\alpha(v_{n+1})$ on the universal family is undetermined. Below, an analogous point of $\Sigma_{\textbf{Rub}}$. Here the relation $\alpha(v_{n+1})<\alpha(u)=\alpha(u')$ is forced.}
\begin{tikzpicture}
\centering
\draw [thick,domain=0:360] plot ({(3/2)*cos(\x)},{sin(\x)});
\draw[<-,thick](2,0)--(3,0);
\draw [thick,domain=0:360] plot ({5+(3/2)*cos(\x)},{sin(\x)});

\coordinate(v1) at (0,-1){};
\coordinate(v2) at (-1.5,0){};
\coordinate(w2) at (1.5,0){}; 
\coordinate(w1) at (5,-1){};
\coordinate(v3) at (3.5,0){};
\coordinate(w3) at (6.5,0){}; 
\coordinate(w4) at (5+3/2*0.707,0.707){};

\node[draw,circle,inner sep=1pt, fill] at (v1){};
\node[above] at (v1){$u$};
\node[draw,circle,inner sep=1pt, fill] at (v2){};
\node[left] at (v2){$w$};
\node[draw,circle,inner sep=1pt,fill] at (w2){};
\node[right] at (w2){$v$};
\node[draw,circle,inner sep=1pt, fill] at (v3){};
\node[right] at (v3){};
\node[draw,circle,inner sep=1pt,fill] at (w3){};
\node[left] at (w3){};
\node[draw,circle,inner sep=1pt, fill] at (w1){};
\node[above] at (w1){$u$};
\node[draw, red, circle,inner sep=1pt, fill] at (w4){};
\node[above] at (w4){$v_{n+1}$};

\draw [thick,domain=0:360] plot ({(3/2)*cos(\x)},{sin(\x)-4});
\draw[<-,thick](2,-4)--(3,-4);
\draw [thick,domain=0:360] plot ({5+(3/2)*cos(\x)},{sin(\x)-4});

\coordinate(v1) at (0,-5){};
\coordinate(v2) at (-1.5,-4){};
\coordinate(w2) at (1.5,-4){}; 
\coordinate(w1) at (5,-5){};
\coordinate(v3) at (3.5,-4){};
\coordinate(w3) at (6.5,-4){}; 
\coordinate(w4) at (5+3/2*0.707,0.707-4){};
\coordinate(v5) at (0,-3){};
\coordinate(w5) at (5,-3){};

\node[draw,circle,inner sep=1pt, fill] at (v1){};
\node[below] at (v1){$u$};
\node[draw,circle,inner sep=1pt, fill] at (v2){};
\node[left] at (v2){$w$};
\node[draw,circle,inner sep=1pt,fill] at (w2){};
\node[right] at (w2){$v$};
\node[draw,circle,inner sep=1pt, fill] at (v3){};
\node[right] at (v3){};
\node[draw,circle,inner sep=1pt,fill] at (w3){};
\node[left] at (w3){};
\node[draw,circle,inner sep=1pt, fill] at (w1){};
\node[below] at (w1){$u$};
\node[draw, red, circle,inner sep=1pt, fill] at (w4){};
\node[above] at (w4){$v_{n+1}$};
\node[draw, blue, circle,inner sep=1pt, fill] at (v5){};
\node[above] at (v5){$u'$};
\node[draw, blue, circle,inner sep=1pt, fill] at (w5){};
\node[above] at (w5){$u'$};

\end{tikzpicture}
\end{figure}
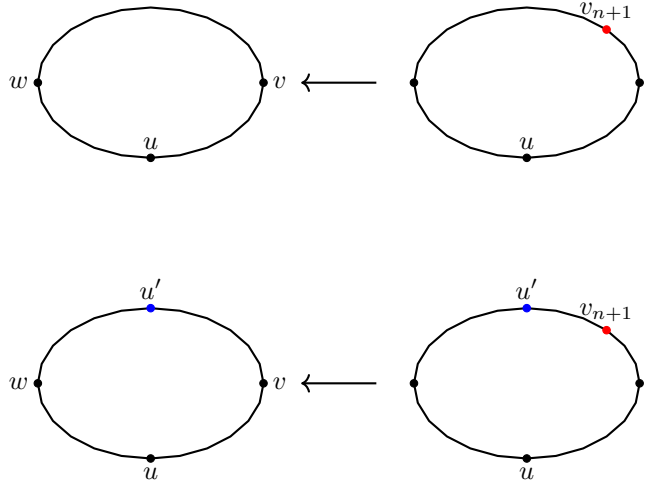

We now continue our carving mission much more aggressively. We first fix the genus of the graph, along with its $n$ markings, to obtain stacks
$$
\Sigma_{?,g,n}.
$$
Next, we fix a \emph{universal stability condition} $\theta$ \cite{KP}. This means a stability condition for the universal family $\mathcal{C}_{g,n}^\textup{trop} \to \Mgn^{\textup{trop}}$ of stable tropical curves: a numerical stability condition in the sense of subsection \ref{subsection:NumStabCond} for each stable graph $\Gamma$ of genus $g$ with $n$ legs, which are compatible with respect to all contractions of edges and automorphisms. We restrict to stable graphs here for simplicity but in fact a similar procedure even works for $\mathfrak{M}_{g,n}^{\textup{trop}}$. We write  $\Sigma_{\mathbf{Div}_{g,n}^\theta}
$ for the subcomplex of $\Sigma_{\mathbf{Div}_{g,n}}$ consisting of the cones $\sigma_{(\Gamma,s)}$ such that the graph $\Gamma$ is quasi-stable, and $D = A - \mathsf{div}(s)$ is $\theta$-stable. In other words, $(\Gamma,s)$ is a $\theta$-flow relative to the stabilization $\Gamma^{\mathsf{st}}$. We write 

$$
\Sigma_{\mathbf{Ord}_{g,A}^\theta}
$$
for the subcomplex consisting of $(\Gamma,s,\kappa)$ with $\Gamma$ quasi-stable and $D = A - \mathsf{div}(s)$ $\theta$-stable, and 
$$
\Sigma_{\mathbf{Rub}_{g,A}^\theta}
$$
for the subcomplex consisting of $(\Gamma,s,\Gamma' \to X)$ with $\Gamma$ quasi-stable, $D = A - \mathsf{div}(s)$ $\theta$-stable, and $\Gamma' \to X$ a stable equidimensional lift. In other words, $(\Gamma,s,\Gamma' \to X)$ is a $\theta$-stable equidimensional flow relative to the stable graph $\Gamma^{\mathsf{st}}$. 

\begin{remark}
We note that the quasi-stable graph $\Gamma$ can be recovered from the destablization $\Gamma'$ and the map $s: \Gamma' \to X$: its vertices are the unstable vertices in $\Gamma'$ (relative to $\Gamma'^{\mathsf{st}})$ for which $D = 1$. So, the information $(\Gamma^{\mathsf{st}},\Gamma,s,\Gamma' \to X)$ in the definition of a $\theta$-equidimensional flow is equivalent to the ellpitical information $s:\Gamma' \to X$. The motivation for studying quadruples is that this setup can be adapted to situations where $\theta$ is a stability condition on an unstable graph $\Gamma^u$; then $\Gamma$ is a quasi-stable model of $\Gamma^u$, but can no longer be recovered from $\Gamma'$, and $\Gamma^u,\Gamma$ must be put as part of the data.
\end{remark}

\begin{lemma}
The preimage of $\Sigma_{\mathbf{Div}_{g,A}^\theta}$ in $\Sigma_{\mathbf{Ord}_{g,n}}$ is $\Sigma_{\mathbf{Ord}_{g,A}^\theta}
$, and the preimage of $\Sigma_{\mathbf{Ord}_{g,A}^\theta}$ in $\Sigma_{\mathbf{Rub}_{g,n}}$ is $\Sigma_{\mathbf{Rub}_{g,A}^\theta}
$. 
 
\end{lemma}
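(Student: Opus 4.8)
The approach is to notice that $\Sigma_{\mathbf{Div}_{g,A}^\theta}$ is cut out of $\Sigma_{\mathbf{Div}_{g,n}}$ by conditions on the combinatorial datum $(\Gamma,s)$ alone — quasi-stability of $\Gamma$ and $\theta$-stability of $D=A-\mathsf{div}(s)$ — and that these are verbatim the conditions added to $\Sigma_{\mathbf{Ord}_{g,n}}$, resp.\ $\Sigma_{\mathbf{Rub}_{g,n}}$, to define $\Sigma_{\mathbf{Ord}_{g,A}^\theta}$, resp.\ $\Sigma_{\mathbf{Rub}_{g,A}^\theta}$. Since the forgetful morphisms act on indexing data by discarding the ordering $\kappa$, resp.\ the equidimensional lift $\Gamma'\to X$, but retaining $(\Gamma,s)$, the statement should fall out of a comparison of subcomplexes cone by cone. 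As a preliminary I would note that $\Sigma_{\mathbf{Div}_{g,A}^\theta}$ really is a subcomplex: a $\theta$-flow $(\Gamma,s)$ contracts to a $\theta$-flow, which is exactly the specialization statement at the end of Section~2. Thus $\Sigma_{\mathbf{Div}_{g,A}^\theta}=\bigcup\sigma_{(\Gamma,s)}$, $\Sigma_{\mathbf{Ord}_{g,A}^\theta}=\bigcup\sigma_{(\Gamma,s,\kappa)}$ and $\Sigma_{\mathbf{Rub}_{g,A}^\theta}=\bigcup\sigma_{(\Gamma,s,\Gamma'\to X)}$, the unions taken over data whose underlying $(\Gamma,s)$ is a $\theta$-flow.

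For the first equality I would use that $\Sigma_{\mathbf{Ord}_{g,n}}\to\Sigma_{\mathbf{Div}_{g,n}}$ is a subdivision: from the explicit description of the map, $\sigma_{(\Gamma,s,\kappa)}$ is the subset of $\sigma_{(\Gamma,s)}\subset\mathbb{R}_{\ge 0}^{E(\Gamma)}$ carved out by the ordering inequalities, and its relative interior meets the relative interior of $\sigma_{(\Gamma,s)}$. Hence if $(\Gamma,s)$ is a $\theta$-flow then $\sigma_{(\Gamma,s)}\subseteq\Sigma_{\mathbf{Div}_{g,A}^\theta}$, so $\sigma_{(\Gamma,s,\kappa)}$ maps into it; conversely, if $\sigma_{(\Gamma,s,\kappa)}$ maps into $\Sigma_{\mathbf{Div}_{g,A}^\theta}$, then $\Sigma_{\mathbf{Div}_{g,A}^\theta}\cap\sigma_{(\Gamma,s)}$ — a union of faces of $\sigma_{(\Gamma,s)}$ corresponding to $\theta$-flows — meets the relative interior of $\sigma_{(\Gamma,s)}$, forcing it to be all of $\sigma_{(\Gamma,s)}$ and hence $(\Gamma,s)$ to be a $\theta$-flow. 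So a cone of $\Sigma_{\mathbf{Ord}_{g,n}}$ lands in the preimage precisely when its $(\Gamma,s)$ is a $\theta$-flow, i.e.\ exactly for the cones of $\Sigma_{\mathbf{Ord}_{g,A}^\theta}$.

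For the second equality I would invoke Lemma~\ref{lem:ordering=equid} together with the established fact that $\Sigma_{\mathbf{Rub}_{g,n}}\to\Sigma_{\mathbf{Ord}_{g,n}}$ is a finite-index inclusion (Corollary~\ref{cor: sublatticestructure}): the two cone stacks have the same underlying poset of cones, $\sigma_{(\Gamma,s,\Gamma'\to X)}$ mapping isomorphically as a rational polyhedral cone onto $\sigma_{(\Gamma,s,\kappa)}$ for the induced ordering $\kappa$, with $\Gamma'=\Gamma_\kappa$. Hence the preimage of $\Sigma_{\mathbf{Ord}_{g,A}^\theta}$ is the union of those $\sigma_{(\Gamma,s,\Gamma'\to X)}$ whose associated $(\Gamma,s,\kappa)$ lies in $\Sigma_{\mathbf{Ord}_{g,A}^\theta}$, i.e.\ with $\Gamma$ quasi-stable and $D$ $\theta$-stable — which is precisely the definition of $\Sigma_{\mathbf{Rub}_{g,A}^\theta}$, no further condition being imposed since every cone of $\Sigma_{\mathbf{Rub}_{g,n}}$ already carries a stable equidimensional lift.

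The only genuine difficulty is bookkeeping: one must interpret "preimage" at the level of cones (a cone of the source lies in the preimage iff it maps into the subcomplex) and unwind the definitions of $\theta$-flow, ordering, and stable equidimensional lift carefully enough to confirm that $\theta$-stability and quasi-stability see only $(\Gamma,s)$ and are untouched by the forgotten data. No new geometric input is required beyond the subdivision and inclusion properties of the forgetful maps and the specialization compatibility of $\theta$-stability, both recorded above.
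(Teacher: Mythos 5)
Your argument is correct and is essentially a careful unwinding of the paper's one-line proof, which simply declares the lemma ``immediate from the definition of the morphisms'': both rest on observing that the $\theta$-conditions (quasi-stability of $\Gamma$, $\theta$-stability of $D = A - \mathsf{div}(s)$) constrain only the underlying pair $(\Gamma,s)$, which the forgetful maps retain unchanged. The extra bookkeeping you supply (relative interiors of subdivision cones for the first equality, the cone-by-cone bijection induced by the finite-index inclusion for the second) is sound and fills out what the author treats as self-evident.
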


\begin{proof}
The proof is immediate from the definition of the morphisms. 
\end{proof}

The space $\Sigma_{\mathbf{Div}^\theta_{g,A}}$ has a stabilization map to $\Mgn^\textup{trop}$. It is shown in theorem \cite[Theorem 23]{HMPPS} that this map factors isomorphically through a subdivision. It follows that 
\begin{corollary}
\label{cor:indexsubstructure}
The map 
$$\Sigma_{\mathbf{Rub}_{g,A}^\theta} \to \Mgn^{\textup{trop}}$$
factors isomorphically through the composition of a subdivision and a finite index sublattice inclusion. 
\end{corollary}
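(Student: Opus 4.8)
The plan is to chain together the two structural results already established in the excerpt. By Theorem~\cite[Theorem~23]{HMPPS}, the stabilization map $\Sigma_{\mathbf{Div}^\theta_{g,A}} \to \Mgn^{\textup{trop}}$ factors as an isomorphism onto a subdivision of $\Mgn^{\textup{trop}}$; call this subdivision $\Sigma_0$. The factorization $\Sigma_{\mathbf{Rub}_{g,A}^\theta} \to \Sigma_{\mathbf{Ord}_{g,A}^\theta} \to \Sigma_{\mathbf{Div}_{g,A}^\theta} \to \Mgn^{\textup{trop}}$, together with the lemma identifying $\Sigma_{\mathbf{Ord}_{g,A}^\theta}$ and $\Sigma_{\mathbf{Rub}_{g,A}^\theta}$ as preimages, lets us analyze the composite one arrow at a time.

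First I would treat $\Sigma_{\mathbf{Ord}_{g,A}^\theta} \to \Sigma_{\mathbf{Div}_{g,A}^\theta}$: by the Lemma asserting $\Sigma_{\mathbf{Ord}} \to \Sigma_{\mathbf{Div}}$ is a subdivision (and since the $\theta$-stable loci are defined as mutual preimages, so the restriction is again a subdivision), composing with the isomorphism $\Sigma_{\mathbf{Div}_{g,A}^\theta} \cong \Sigma_0 \subset \Mgn^{\textup{trop}}$ exhibits $\Sigma_{\mathbf{Ord}_{g,A}^\theta} \to \Mgn^{\textup{trop}}$ as (isomorphic onto) a further subdivision of $\Mgn^{\textup{trop}}$. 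Second I would treat $\Sigma_{\mathbf{Rub}_{g,A}^\theta} \to \Sigma_{\mathbf{Ord}_{g,A}^\theta}$: by the same Lemma this map is a finite index inclusion, meaning it is a bijection on underlying cone complexes (an isomorphism of the rational fans) but refines the integral structure on each cone to a finite-index sublattice. Here I would invoke Corollary~\ref{cor: sublatticestructure}: on each cone $\sigma_{(\Gamma,s,\kappa)}$ of $\Sigma_{\mathbf{Ord}_{g,A}^\theta}$, the corresponding cone of $\Sigma_{\mathbf{Rub}_{g,A}^\theta}$ has the same underlying rational cone and its lattice is the one freely generated by the primitive ray generators. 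Since $\Sigma_{\mathbf{Ord}_{g,A}^\theta}$ is simplicial, this is exactly a finite-index sublattice inclusion cone-by-cone, compatible across faces. Composing, $\Sigma_{\mathbf{Rub}_{g,A}^\theta} \to \Mgn^{\textup{trop}}$ factors isomorphically as: a subdivision of $\Mgn^{\textup{trop}}$ (namely the one underlying $\Sigma_{\mathbf{Ord}_{g,A}^\theta}$), followed by a finite index sublattice inclusion (the $\mathbf{Rub}$-refinement of the integral structure).

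The main obstacle I expect is a bookkeeping one rather than a conceptual one: verifying that the word "isomorphically" genuinely propagates through the composition, i.e. that the identification $\Sigma_{\mathbf{Div}_{g,A}^\theta} \cong \Sigma_0$ is compatible with the subdivision and lattice-refinement descriptions of the two upper arrows, so that the net map really is the composite of \emph{one} subdivision and \emph{one} finite-index inclusion (and not something that interleaves them in a way that cannot be disentangled). This is handled by noting that subdivisions and finite-index-inclusions commute in the appropriate sense — a finite-index refinement of the integral structure pulls back along a subdivision to a finite-index refinement — so one may first perform all the subdivision (passing to the fan underlying $\Sigma_{\mathbf{Ord}_{g,A}^\theta}$, which by the previous paragraph is a subdivision of $\Mgn^{\textup{trop}}$) and then perform the lattice refinement dictated by Corollary~\ref{cor: sublatticestructure}. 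A secondary point to check is that the finite-index inclusion is well-defined globally, i.e. the sublattices chosen on the various cones of $\Sigma_{\mathbf{Rub}_{g,A}^\theta}$ glue; this follows because, as observed before Corollary~\ref{cor: sublatticestructure}, the integral structures of $\Sigma_{\mathbf{Rub}}$ and $\Sigma_{\mathbf{Ord}}$ already agree on rays, and the sublattice on a cone is generated by its rays by smoothness of $\Sigma_{\mathbf{Rub}_{g,A}^\theta}$.
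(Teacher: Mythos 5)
Your argument is correct and is essentially the paper's: combine HMPPS Theorem 23 (so $\Sigma_{\mathbf{Div}_{g,A}^\theta}\to\Mgn^{\textup{trop}}$ is a subdivision) with the earlier lemma that $\Sigma_{\mathbf{Ord}}\to\Sigma_{\mathbf{Div}}$ is a subdivision and $\Sigma_{\mathbf{Rub}}\to\Sigma_{\mathbf{Ord}}$ is a finite-index inclusion, restricted to the $\theta$-stable loci via the preimage lemma. The worry about interleaving is actually moot here, since the factorization $\Sigma_{\mathbf{Rub}_{g,A}^\theta}\to\Sigma_{\mathbf{Ord}_{g,A}^\theta}\to\Mgn^{\textup{trop}}$ already places the finite-index inclusion first and the (composite) subdivision second, so no commutation argument is needed.
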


\section{Algebraizing and Globalizing}
\subsection{Tropicalization and Tropical Operations}
In this paper, we use the language of logarithmic geometry as our main means to access algebrogeometric problems via combinatorial tools.  We rapidly review the foundations. For a thorough treatment of logarithmic geometry, we refer the interested reader to \cite{K},\cite{Ogus}. For a shallower treatment more adapted to our needs here we refer to \cite{HMPPS,MRan}.   

For any log scheme (or algebraic stack with log structure) $S$, one can assign a cone stack $\Sigma_S$, called the tropicalization of $S$ \cite{CCUW}. The reader can think of as a first approximation that $\Sigma_S$ is a rational polyhedral cone complex here, obtained by gluing the various cones dual to the characteristic monoids 
$$
\sigma_s = \overline{M}_{S,s}^\vee
$$
as $s$ ranges through geometric points of $S$. This may seem like an infinite amount of data, but\footnote{with some mild finiteness assumptions on the underlying scheme of $S$.} the characteristic monoid $\overline{M}_S$ stratifies $S$ into a finite number of strata, and $\overline{M}_{S,s}$ is locally constant on strata, so only a finite number of distinct cones $\sigma_s$ appear in the gluing -- one ``common" cone $\sigma_x$ for each stratum $x$, equal to $\sigma_s$ for any $s \in x$. The only subtlety is that the sheaf $\overline{M}_{S}$ is only defined on the \'etale site of $S$ in general, and so in order to capture this phenomenon the gluing needs to be taken as a stack. What working with the cone stack means in practice, is that there is one cone $\sigma_x$ for each stratum $x$ of $S$ in $\Sigma_S$, as above, but one map $\sigma_y \to \sigma_x$ \emph{for each} \'etale specialization $\zeta: y \to x$ of strata. Again, with mild assumptions on $S$, there is only a finite number of distinct maps $\sigma_y \to \sigma_x$ for such specializations; however, one can have for instance non-trivial specializations $x \rightsquigarrow x$, which amount to automorphisms in $\Sigma_S$. All in all, one gets a system of rational polyhedral cones $\{\sigma_x\}$ with potentially several maps 
\[
\sigma_y \to \sigma_x,
\]
each consisting of a composition of an automorphism with the inclusion of a face. The cone stack $\Sigma_S$ is the colimit 
\[
\Sigma_S = \varinjlim \sigma_x
\]
of this system in the category of stacks over rational polyhedral cones. 

Thus, the cone stack $\Sigma_S$ is in its essence a a tropical object. Nevertheless, any cone stack can be given a functor of points in the category of log schemes: for any cone stack $\Sigma$, one obtains a prestack
$$
\Sigma \rightarrow \textbf{LogSch}
$$
by defining
$$
T \to \Sigma_S \rightsquigarrow \mathsf{Hom}(\overline{M}_T(T)^\vee, \Sigma_S).
$$
The associated stack is representable by an algebraic stack with logarithmic structure, refered to as an \emph{Artin fan}. Instead of introducing Artin fans here, we will simply understand morphisms from a log scheme to $\Sigma$ to mean maps to the associated stack. In particular, we have a tautological morphism 
$$
S \to \Sigma_S
$$
and any map $\Sigma \to \Sigma_S$ induces a map
$$
S \times_{\Sigma_S} \Sigma \to S.
$$
One can thus lift combinatorial operations one performs on $\Sigma_S$ to operations of $S$; often, they have geometric meaning. The ones we've encountered in section \ref{sec: tropical moduli} are: 

\begin{itemize}
    \item Subdivisions $\widetilde{\Sigma}_S \to \Sigma_S$. These lift to \emph{log modifications} $\widetilde{S} \to S$, which are proper and surjective representable maps. 
    \item Roots $\Sigma_S' \to \Sigma_S$, which are maps that replace the integral structure of $\Sigma_S$ with an integral structure coming from a finite index sublattice. These correspond to root stacks of $S$ algebraically, which are proper, non-representable maps $S' \to S$, which are bijective on geometric points -- a generalization of roots along divisors of $S$. Details can be found in \cite{BV},\cite{GMtor}.  
    \item Inclusion of a subcomplex $\Sigma \subset \Sigma_S$. These lift to open inclusions $V \subset S$. 
    
\end{itemize}

\subsection{Logarithmic Curves} Let $C \to S$ be a logarithmic curve. Applying the tropicalization process yields a map of cone stacks 
$$
\Sigma_C \to \Sigma_S.
$$
Unwinding the definition, one arrives at the data of a \emph{family} of tropical curves\footnote{Unwinding the definition is not necessarily simple; we advise the reader with not much experience working with cone complexes to take the following set of data as the \emph{definition} of the tropicalization of $C \to S$.}: 

\begin{itemize}
\item  For each point $s \in S$, an underlying graph $\Gamma_s$: the dual graph of $C_s$. 
\item A tropical curve structure on $\Gamma_s$ metrized by $\overline{M}_{S,s}$: for each edge $e \in E(\Gamma_s)$, a length $\ell_e \in \overline{M}_{S,s}$. The length $\ell_e$ is the ``smoothing parameter" of the corresponding node $q$ in $C_s$: there is a unique element $\ell_e$ in $\overline{M}_{S,s}$ such that 
$$
\overline{M}_{C,q} \cong \overline{M}_{S,s}
\oplus_\NN \NN^2$$
under the map $\NN \to \overline{M}_{S,s}$ sending $1 \to \ell_e$, and the diagonal $\NN \to \NN^2$.
\item Compatibility with \'etale specializations: for each $\zeta: s \rightsquigarrow t$, one has homomorphisms $f_\zeta:\Gamma_t \to \Gamma_s$ compactible with the induced map $\overline{M}_{S,t} \to \overline{M}_{S,s}$. 
\end{itemize}

The geometric notions on tropical curves discussed in the previous section globalize to logarithmic curves. The globalization works the same way for all concepts, by working fiber by fiber and demanding compatibility with \'etale specializations: a geometric object $A$ on a tropical curve globalizes to the analogous object on a logarithmic curve as a system of $A_s$ on $\Gamma_s/\overline{M}_{S,s}$ for each $s \in S$, compatibly with \'etale specializations. For example, a piecewise linear function on $C \to S$ is a collection of piecewise linear functions 
$$
\alpha_s \in \mathsf{PL}(\Gamma_s)
$$
which are compatible with the maps $\Gamma_t \to \Gamma_s$ for each \'etale specialization $s \rightsquigarrow t$. We highlight that in order to define a numerical stability condition $C \to S$, this means that one has to give a stability condition $\theta$ on the dual graph of every fiber, which are compatible with both potential automorphisms of the fibers $C_s$ and which are additive with respect to smoothings of nodes/edge contractions. 

\begin{example}
\label{example: Adivisor}
Let $C \to S$ be a logarithmic curve with $n$ markings $x_1,\cdots,x_n$, and let $(a_1,\cdots,a_n)$ be a vector of integers (for instance, one adding up to $k(2g-2)$). Consider the vector whose $i$-th entry is the sum of $k$-times the degree of the canonical bundle on the component containing the $i$-th marking with $a_i$: 
$$
k\deg \omega_{C/S}(v_i)+a_i
$$

This vector can be considered as a tropical divisor $A$ on $C \to S$: for each $s \in S$, it is the tropical divisor on $\Gamma_s/\overline{M}_{S,s}$ given by 
$$
A= \sum (a_i + k\deg \omega_{C_s}(v_i))v_i
$$
for $v_i$ the vertex containing the marking $x_i$. It is easy to see that this system of tropical divisors specializes correctly under \'etale specializations. 
\end{example}

Subdivisions deserve a special mention. Subdivisions of the fibers $\Gamma_s/\overline{M}_{S,s}$ compatible with \'etale specializations -- give rise to a \emph{logarithmic modifications} $C' \to C$. However, the log modifications that arise this way are special, as the induced map $C' \to S$ remains a logarithmic curve. We call a log modification that arises as a subdivision of tropical curve a subdivision of $C$. 

\section{Algebraic Moduli}
\label{sec: algebraicmoduli}

%We fix the logarithmic curve $C \to S$. We will assume that $C \to S$ carries $n$ markings $x_1,\cdots,x_n$, with $n \ge 1$, to simplify the presentation, as this holds in all applications we are interested in. Given a log morphism $T \to S$, we write $C_T$ for the fiber product $C \times_S T$.  

In this section, we will assume that all our curves come with at least one marking. This is analogous to the assumption in \ref{sec: tropical moduli} and can be avoided, but nevertheless we require it in order to simplify the presentation, as in our applications a marking is always present. We follow \cite{MarcusWise} and define: 

\begin{definition}
The stack $\textbf{Div}$ on $\textbf{LogSch}$ parametrizing over $S$ pairs $(C \to S,\alpha)$, consisting of 
\begin{itemize}
\item A logarithmic curve $C \to S$. 
\item A piecewise linear function $\alpha$ on $C$, which is $0$ on the component containing the first marking.
%\item (Stability) The divisor $\mathsf{div}(\alpha)$ is non-zero on all exceptional components of $C$. 
\end{itemize}
Automorphisms are automorphisms of $\psi:C \to C$ fixing the underlying scheme of $S$, such that the induced automorphism $\overline{\psi}$ on the tropicalization of $C$ respects $\alpha$: $\overline{\psi} \circ \alpha  = \alpha$. 
\end{definition}

\begin{definition}
The stack $\textbf{Ord}$ parametrizing pairs $(C \to S, \alpha)$ of 
\begin{itemize}
\item A log curve $C \to S$. 
\item A piecewise linear function on $C$ whose values are totally ordered and which is $0$ on the component containing the first marking.
\end{itemize} 
\end{definition}

\begin{definition}
The stack $\textbf{Rub}$ parametrizing pairs $(C \to S, \alpha, C' \to X)$ of 
\begin{itemize}
\item A log curve $C \to S$. 
\item A piecewise linear function $\alpha$ on $C$ which is $0$ on the component containing the first marking.
\item A stable lift of $\alpha$ to an equidimensional map 
\[
C' \to X.
\]
\end{itemize} 
\end{definition}
Automorphisms are defined as for $\mathbf{Div}$.

\begin{remark}
The assumption that curves $C \to S$ carry a marking is put precisely in order to rigidify the stacks. We note however that in the case of $\textbf{Rub}$, there is a canonical rigidification which does not depend on the presence of markings: the values of the function $\alpha$ are totally ordered, so we can always demand that $\alpha$ is $0$ on the minimal value.  
\end{remark}

There are evident forgetful maps $\mathbf{Rub} \to \mathbf{Ord} \to \mathbf{Div}$, and a forgetful-stabilization morphisms $\mathbf{Div} \to \Mgn$. We note here that we are potentially in an uncomfortable situation, as in section \ref{sec: tropical moduli} we introduced cone stacks $\Sigma_{\mathbf{Div}},\Sigma_{\mathbf{Ord}},\Sigma_{\mathbf{Rub}}$, whereas we should have reserved the notation for the cone stacks associated to $\mathbf{Div},\mathbf{Rub},\mathbf{Ord}$. However, 
\begin{theorem}
We have 
$$
\mathbf{Div} = \Mgn \times_{\Mgn^{\textup{trop}}} \Sigma_{\mathbf{Div}}
$$
and similar equalities hold for $\mathbf{Ord},\mathbf{Rub}$. 
\end{theorem}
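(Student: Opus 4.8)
The plan is to reduce the statement to the universal property of the tropicalization functor and then check that the moduli problem defining $\mathbf{Div}$ (respectively $\mathbf{Ord}$, $\mathbf{Rub}$) matches, on the nose, the fiber product being claimed. First I would recall that for any logarithmic stack $T$ there is a tautological map $T \to \Sigma_T$, and that a map $T \to \Sigma$ into a cone stack $\Sigma$ is by definition a map $\overline{M}_T^\vee \to \Sigma$; applied to $T = S$ with a logarithmic curve $C \to S$, the tautological map $S \to \Sigma_S$ together with the tropicalization $\Sigma_C \to \Sigma_S$ of the curve records exactly the family of tropical curves $\Gamma_s/\overline{M}_{S,s}$ described in the section on logarithmic curves. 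Thus a map $S \to \Mgn$ factoring through $\Mgn \to \Mgn^{\textup{trop}}$ is the same as a logarithmic curve $C \to S$ together with its tropicalization, and a lift of $S \to \Mgn^{\textup{trop}}$ to $\Sigma_{\mathbf{Div}}$ is, by the representability theorem proved earlier (that $\Sigma_{\mathbf{Div}}'$ represents the functor $\Sigma_{\mathbf{Div}}$), exactly a piecewise linear function $\alpha$ on the family $\Gamma_\bullet$, normalized to vanish on the vertex carrying the first leg.

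Next I would assemble these two observations: a map $S \to \Mgn \times_{\Mgn^{\textup{trop}}} \Sigma_{\mathbf{Div}}$ is precisely a logarithmic curve $C \to S$ together with a piecewise linear function $\alpha$ on its tropicalization vanishing on the first marked component — which is verbatim the definition of a point of $\mathbf{Div}(S)$. One must also match morphisms: an isomorphism in the fiber product is an isomorphism of curves $\psi: C \to C$ over $S$ whose tropicalization respects $\alpha$, and this is exactly the notion of automorphism built into the definition of $\mathbf{Div}$. Here I would be careful to point out that the fiber product is taken in logarithmic schemes (equivalently, that $\Sigma_{\mathbf{Div}} \to \Mgn^{\textup{trop}}$ is a map of cone stacks and the fiber product is formed via the associated Artin fans), so that there is no discrepancy between the naive set-theoretic fiber product and the one appearing in the statement. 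The identical argument, using the parts of the representability theorem for $\Sigma_{\mathbf{Ord}}$ and $\Sigma_{\mathbf{Rub}}$, handles $\mathbf{Ord}$ and $\mathbf{Rub}$: a lift to $\Sigma_{\mathbf{Ord}}$ adds a total ordering of the values $\alpha(v)$, and a lift to $\Sigma_{\mathbf{Rub}}$ adds the stable equidimensional lift $\mathsf{Eq}_\Gamma(\alpha) \to X$, matching the definitions of $\mathbf{Ord}$ and $\mathbf{Rub}$ respectively.

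The main obstacle, and the only genuinely non-formal point, is the compatibility of the equidimensional-lift data with the logarithmic (as opposed to purely combinatorial) structure in the $\mathbf{Rub}$ case. On the tropical side, the lift $\mathsf{Eq}_\Gamma(\alpha) \to X$ generally requires subdividing edges at points with coordinates in $M_{\mathbb{Q}}^\gp$ rather than $M^\gp$, which is why $\Sigma_{\mathbf{Rub}} \to \Sigma_{\mathbf{Ord}}$ is only a finite-index sublattice inclusion rather than an isomorphism; correspondingly, on the algebraic side the destabilization $C' \to C$ may only exist after a root stack, so one must verify that ``a stable equidimensional lift of $\alpha$ on $C$" — interpreted via the Artin fan of $\Sigma_{\mathbf{Rub}}$ — is the correct algebro-geometric incarnation of the fiber product $S \times_{\Mgn^{\textup{trop}}} \Sigma_{\mathbf{Rub}}$. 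I would handle this by invoking the dictionary recalled in the subsection on tropicalization and tropical operations: sublattice inclusions of cone stacks correspond to root stacks, which are bijective on geometric points, so the functor-of-points description of $\mathbf{Rub}$ (a log curve plus $\alpha$ plus a stable equidimensional lift $C' \to X$, with $C'$ allowed to be a root-stack destabilization) is exactly what the fiber product produces. Once this identification of the $\mathbf{Rub}$ data is in place, the proof is again a direct unwinding of definitions, and the claimed equalities follow.
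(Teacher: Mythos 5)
Your proof takes essentially the same route as the paper: unwind what a map of $S$ into $\Mgn \times_{\Mgn^{\textup{trop}}} \Sigma_{\mathbf{Div}}$ is --- a log curve $C/S$ together with a piecewise linear function on its tropicalization, normalized to vanish on the first marking (plus the extra ordering or equidimensional-lift data for $\mathbf{Ord}$, $\mathbf{Rub}$) --- and observe that this is verbatim the functor of points of $\mathbf{Div}$ (resp.\ $\mathbf{Ord}$, $\mathbf{Rub}$), including the description of automorphisms; the paper is a bit more explicit about building the map to the cone stack locally and checking compatibility with \'etale specializations, which is where the stackiness of $\Sigma_{\mathbf{Div}}$ lives, but the content is the same. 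One small correction: your worry about the root stack in the $\mathbf{Rub}$ case is tangential to this particular statement, since the stack $\mathbf{Rub}$ is defined so that the stable equidimensional lift $C' \to X$ already exists as a log curve over $S$, making the comparison $\mathbf{Rub} = \Mgn \times_{\Mgn^{\textup{trop}}} \Sigma_{\mathbf{Rub}}$ just as formal as the $\mathbf{Div}$ case; the finite-index inclusion $\Sigma_{\mathbf{Rub}} \hookrightarrow \Sigma_{\mathbf{Ord}}$ is what governs the map $\mathbf{Rub} \to \mathbf{Ord}$ (and hence $\mathbf{Rub} \to \Mgn$), not the identity being proved here.
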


\begin{proof}
We construct a map $\mathbf{Div} \to \Sigma_{\mathbf{Div}}$. Let $S$ be a log scheme; we must construct a map $\mathbf{Div}(S) \to \Sigma_{\mathbf{Div}}(S)$, which amounts to constructing a map locally around each $x \in S$, compatibly with any \'etale specialization $\zeta: x \rightsquigarrow y$. So we may replace $S$ with a sufficiently small neighborhood of $x$. Then, we can assume that $x$ is in the closed stratum of $S$ and that $\overline{M}_{S,x} = \overline{M}_{S}(S)$. Furthermore, $\overline{M}_{S,y}$ is a quotient of $\overline{M}_{S,x}$ by a face. Let $(C \to S,\alpha)$ be an element of $\mathbf{Div}(S)$. 

We write $\Gamma_x$ for the dual graph of $C_x$, and $\Gamma_y$ for the dual graph of $C_y$. These have the structure of tropical curves metrized by $\overline{M}_{S,x}$, $\overline{M}_{S,y}$ respectively. Furthermore, they carry piecewise linear functions $\alpha_x,\alpha_y$. 

The map $\overline{M}_{S,x} \to \overline{M}_{S,y}$ canonically induces a tropical curve $\overline{\Gamma}_x$ metrized by $\overline{M}_{S,y}$, by contracting edges whose length is $0$ in $\overline{M}_{S,y}$, and a piecewise linear function $\overline{\alpha}_x$. We thus get two maps $\overline{M}_{S,y} \to \Sigma_{\mathbf{Div}}$, corresponding to $(\overline{\Gamma}_x,\overline{\alpha}_x)$ and $(\Gamma_y,\alpha_y)$. 

The specialization $\zeta$ induces a map $\overline{\Gamma}_x \to \Gamma_y$, which, by definition of a piecewise linear function on $C$, takes $\overline{\alpha}_x$ to $\alpha_y$. This is precisely an isomorphism in $\Sigma_{\mathbf{Div}}$, and so the map is compatible with specializations. Thus, we get the desired map $\mathbf{Div} \to \Sigma_{\mathbf{Div}}$ and as a result a map 
$$
\mathbf{Div} \to \Mgn \times_{\Mgn^{\textup{trop}}} \Sigma_{\mathbf{Div}}.
$$
On the other hand, an element of the fiber product is a log curve $C \to S$, together with a piecewise linear function on its tropicalization; and so the map $\mathbf{Div} \to \Mgn \times_{\Mgn^{\textup{trop}}} \Sigma_{\mathbf{Div}}$ is essentially surjective. Let $S$ be the spectrum of an algebraically closed field, $C \to S$ a log curve, and $\Gamma$ its tropicalization, metrized by $M = \overline{M}_S(S)$. The automorphism groups
$$
\mathsf{Aut}(\Mgn \times_{\Mgn^{\textup{trop}}} \Sigma_{\mathbf{Div}})(S)
$$
in the fiber product consist of pairs of an automorphism $\phi:\Gamma \to \Gamma$ with $\phi \circ \alpha = \alpha$, together with an automorphism $\psi$ of $C$ inducing $\phi$, i.e. automorphisms $\psi$ of $C$ with $\overline{\psi} \circ \alpha = \alpha$. These are exactly the automorphisms of $\mathbf{Div}$. 

The statement for $\textbf{Ord},\textbf{Rub}$ are proved precisely the same way. 
\end{proof}

As corollaries of the tropical results of \ref{sec: tropical moduli}, we obtain several theorems, by translating the tropical results of section \ref{sec: tropical moduli}. Fixing a universal stability condition $\theta$ and a vector of integers $(a_1,\cdots,a_n)$ with $\sum a_i = k(2g-2)$, and let $A$ be the tropical divisor of example \ref{example: Adivisor}. We obtain:   

\begin{theorem}[Marcus-Wise Corollary 5.3.5]
The maps $\textbf{Rub} \to \mathbf{Ord} \to \mathbf{Div}$ are proper, log \'etale monomorphisms. 
\end{theorem}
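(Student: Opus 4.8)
The plan is to deduce the statement directly from the corresponding tropical facts established in Section~\ref{sec: tropical moduli}, using the fiber product descriptions
$$
\mathbf{Div} = \Mgn \times_{\Mgn^{\textup{trop}}} \Sigma_{\mathbf{Div}}, \qquad
\mathbf{Ord} = \Mgn \times_{\Mgn^{\textup{trop}}} \Sigma_{\mathbf{Ord}}, \qquad
\mathbf{Rub} = \Mgn \times_{\Mgn^{\textup{trop}}} \Sigma_{\mathbf{Rub}}
$$
just proved. Since the maps $\mathbf{Rub}\to\mathbf{Ord}\to\mathbf{Div}$ are obtained by base change along $\Mgn\to\Mgn^{\textup{trop}}$ from $\Sigma_{\mathbf{Rub}}\to\Sigma_{\mathbf{Ord}}\to\Sigma_{\mathbf{Div}}$, it suffices to prove that the latter two maps of cone stacks are proper, log \'etale monomorphisms, and then invoke that each of the relevant properties (properness, being log \'etale, being a monomorphism) is stable under base change along $\Mgn\to\Mgn^{\textup{trop}}$, together with the fact that the algebraization functor $\Sigma\rightsquigarrow$ (its Artin fan) takes subdivisions to log modifications (proper, representable, surjective) and sublattice/root maps to root stacks (proper, monomorphisms on geometric points, log \'etale). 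Concretely, I would first recall from the lemma that $\Sigma_{\mathbf{Ord}}\to\Sigma_{\mathbf{Div}}$ is a subdivision and $\Sigma_{\mathbf{Rub}}\to\Sigma_{\mathbf{Ord}}$ is a finite-index sublattice inclusion.

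The first step is the map $\mathbf{Ord}\to\mathbf{Div}$. Being a subdivision, $\Sigma_{\mathbf{Ord}}\to\Sigma_{\mathbf{Div}}$ is a monomorphism of cone stacks (it is injective on cones and on points), hence induces a monomorphism on Artin fans and, after base change, a monomorphism $\mathbf{Ord}\to\mathbf{Div}$. A subdivision of cone stacks is, by general theory (\cite{CCUW}, or the dictionary recalled in the excerpt), log \'etale: on each cone it is an isomorphism onto its image after suitable decomposition, so the relative log structure is trivial; this passes to the associated log modification $\mathbf{Div}^{\mathbf{Ord}}\to\mathbf{Div}$, which is log \'etale. Properness of a log modification is standard — it is proper and surjective as a map of underlying algebraic stacks, being built by iterated star subdivisions / blowups. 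Combining, $\mathbf{Ord}\to\mathbf{Div}$ is a proper log \'etale monomorphism.

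The second step is $\mathbf{Rub}\to\mathbf{Ord}$. By the lemma, $\Sigma_{\mathbf{Rub}}\to\Sigma_{\mathbf{Ord}}$ is an isomorphism of underlying cone complexes but with a finite-index refinement of the integral lattice on each cone (Corollary~\ref{cor: sublatticestructure}). Such a map algebraizes to a root-type morphism: it is representable by root stacks in the sense of \cite{BV},\cite{GMtor}, hence proper, a monomorphism (in fact bijective on geometric points), and log \'etale because root stacks along normal crossings are log \'etale. Base changing along $\Mgn\to\Mgn^{\textup{trop}}$ preserves all three properties, giving that $\mathbf{Rub}\to\mathbf{Ord}$ is a proper log \'etale monomorphism. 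Composing the two steps — properness, log \'etaleness, and the monomorphism property are all closed under composition — yields that $\mathbf{Rub}\to\mathbf{Div}$ has the same properties, completing the proof.

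The main obstacle I anticipate is not any of the stability-under-base-change arguments, which are formal, but rather verifying carefully that the \emph{monomorphism} property survives algebraization: a subdivision of cone stacks is a monomorphism of cone stacks, but the associated map of Artin fans (and then of log schemes after base change) is only a monomorphism in the category of \emph{log} schemes, since the log structures genuinely differ on the two sides. One must therefore phrase the statement in $\mathbf{LogSch}$ throughout and check that ``monomorphism'' is meant there — equivalently, that the diagonal $\mathbf{Rub}\to\mathbf{Rub}\times_{\mathbf{Div}}\mathbf{Rub}$ is an isomorphism of log stacks, which reduces to the corresponding statement for cone stacks where it is immediate. A secondary subtlety is making sure the root-stack algebraization of the sublattice inclusion really is a monomorphism in $\mathbf{LogSch}$ and not merely universally injective; this is where one needs the precise results of \cite{BV},\cite{GMtor} rather than a hand-wave, but it is exactly the content already cited in the tropical operations dictionary of the excerpt.
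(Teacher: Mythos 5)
Your proposal is correct and takes essentially the same route as the paper, which is terse here: it states the result as a corollary of the tropical facts (the fiber-product identifications, the lemma that $\Sigma_{\textbf{Ord}} \to \Sigma_{\textbf{Div}}$ is a subdivision and $\Sigma_{\textbf{Rub}} \to \Sigma_{\textbf{Ord}}$ is a finite-index sublattice inclusion, and the subdivision $\mapsto$ log modification / root $\mapsto$ root stack dictionary from the ``Tropicalization and Tropical Operations'' subsection), citing Marcus--Wise for the full details. Your write-up simply makes explicit the base-change and category-of-log-schemes bookkeeping that the paper leaves to the reader, and your cautionary remark about ``monomorphism in $\mathbf{LogSch}$'' (the diagonal must be checked there, not in underlying schemes) is exactly the right subtlety to flag.
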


\begin{theorem}
The stack $\textbf{Rub}$ is non-singular and its universal curve is quasi-smooth. 
\end{theorem}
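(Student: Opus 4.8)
The plan is to deduce both assertions from the structural results already established for the tropical moduli stacks, together with the base-change description $\mathbf{Rub} = \Mgn \times_{\Mgn^{\textup{trop}}} \Sigma_{\mathbf{Rub}}$. First I would recall that $\Mgn$ is a smooth algebraic stack, and that its characteristic sheaf exhibits $\Mgn^{\textup{trop}}$ as a smooth (indeed, \'etale-locally it is the Artin fan of a toric variety coming from the simplicial cone $\mathbb{R}_{\ge 0}^{E(\Gamma)}$) cone stack. Smoothness of an algebraic stack with log structure that is log smooth over a point can be detected on its Artin fan together with the underlying classical stack: a log modification of a smooth log scheme along a \emph{smooth} cone stack (one whose cones are all isomorphic to $\mathbb{N}^k$) stays smooth, because \'etale-locally such a modification is the base change of a smooth toric variety, i.e. affine space. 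So the strategy is: (1) show $\Sigma_{\mathbf{Rub}} \to \Sigma_{\mathbf{Div}}$ realizes $\mathbf{Rub}$ as a log-smooth-over-a-point stack over $\Mgn$; (2) invoke the Lemma stating $\Sigma_{\mathbf{Rub}}$ is smooth; (3) combine with smoothness of $\Mgn$.

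Concretely, I would argue as follows. By Corollary \ref{cor:indexsubstructure} (or its global analogue following from the Theorem identifying $\mathbf{Rub}$ with the fiber product), the map $\mathbf{Rub} \to \Mgn$ factors through the composition of a log modification and a root stack, each performed along the cone stack $\Sigma_{\mathbf{Rub}}$, which by the Lemma above is smooth — its cones are all of the form $\mathbb{N}^{E(X)} \times \mathbb{N}^{E^c(\Gamma)}$. Log modifications of a smooth log algebraic stack along a smooth subdivision are smooth, since étale-locally on $\Mgn$ the modification is the fiber product of the toric variety attached to a smooth cone (an affine space, since the cone is $\mathbb{N}^k$) with a smooth chart; and root stacks along smooth centers of smooth stacks are smooth. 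Hence $\RMgn^\theta = \mathbf{Rub} \times_{\mathbf{Div}} \DMgn^\theta$, being an open substack of $\mathbf{Rub}$ (the open locus where the quasi-stable model is $\theta$-stable), is smooth. For the universal curve, I would use Lemma \ref{lem:univfamsimplicial}: the universal family of $\Sigma_{\mathbf{Rub}}$ is simplicial, so its cones are simplicial but possibly not smooth — the obstruction is exactly the root extractions of Remark \ref{rem:rootnecessary}. A simplicial cone complex algebraizes to a log stack all of whose local models are toric varieties attached to simplicial cones, and such toric varieties are $\mathbb{Q}$-factorial; pulling back the relative log structure from a smooth base preserves this, so the universal curve $\widetilde{C}_{g,A}^\theta$ is quasi-smooth, i.e. every Weil divisor is $\mathbb{Q}$-Cartier.

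The main obstacle I expect is making precise the passage from ``the cone stack $\Sigma_{\mathbf{Rub}}$ (resp.\ its universal family) is smooth (resp.\ simplicial)'' to ``the algebraic stack $\mathbf{Rub}$ (resp.\ $\widetilde{C}_{g,A}^\theta$) is smooth (resp.\ quasi-smooth).'' This requires knowing that base change along $\Mgn \to \Mgn^{\textup{trop}}$ turns a smooth cone stack into a smooth log algebraic stack, which in turn rests on the local structure theory of logarithmically smooth morphisms and log modifications / root stacks — the relevant statements are in \cite{CCUW}, \cite{BV}, \cite{GMtor}, but assembling them correctly (in particular handling the stacky/root-stack corrections and the fact that $\mathbf{Rub}$ is only log étale, not log smooth, over $\mathbf{Div}$) is the delicate point. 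A secondary subtlety is confirming that the ``simplicial but not smooth'' phenomenon for the universal family really does yield only $\mathbb{Q}$-factoriality and nothing worse, i.e. that the root-stack corrections from Remark \ref{rem:rootnecessary} do not introduce non-$\mathbb{Q}$-Cartier divisors; this follows because a root stack of a smooth stack along a $\mathbb{Q}$-Cartier divisor is again $\mathbb{Q}$-factorial, but it should be spelled out.
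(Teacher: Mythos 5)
Your proposal takes essentially the same approach as the paper: the paper presents this theorem as a direct corollary of the tropical results of Section \ref{sec: tropical moduli}, and you correctly identify the two inputs — smoothness of the cone stack $\Sigma_{\mathbf{Rub}}$ (all cones $\cong \mathbb{N}^k$) plus the fiber-product description $\mathbf{Rub} = \Mgn \times_{\Mgn^{\textup{trop}}} \Sigma_{\mathbf{Rub}}$ for non-singularity, and Lemma \ref{lem:univfamsimplicial} (simplicial universal family) for quasi-smoothness of the universal curve. One small inaccuracy worth flagging: the full map $\mathbf{Rub} \to \Mgn$ is \emph{not} globally the composition of a log modification and a root stack, since $\mathbf{Rub}$ is not of finite type over $\Mgn$ — only the restriction $\RMgn^\theta \to \Mgn$ has that structure (Corollary \ref{cor:indexsubstructure}); what actually carries the argument is the \'etale-local picture you give immediately afterward (smoothness of $\Mgn \to \Mgn^{\textup{trop}}$ composed with base change to the smooth Artin fan of $\Sigma_{\mathbf{Rub}}$), which works without any global finiteness.
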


\begin{theorem}
The map $\textbf{Rub} \to \Mgn$ is of Deligne-Mumford type. The map $\textbf{Rub} \to \textbf{Ord}$ is a relative coarse moduli space over $\Mgn$.
\end{theorem}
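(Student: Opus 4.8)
The plan is to analyze the factorization
$$
\textbf{Rub} \longrightarrow \textbf{Ord} \longrightarrow \textbf{Div} \longrightarrow \Mgn
$$
arrow by arrow, using the identifications $\textbf{Div} = \Mgn \times_{\Mgn^{\textup{trop}}} \Sigma_{\mathbf{Div}}$, $\textbf{Ord} = \Mgn \times_{\Mgn^{\textup{trop}}} \Sigma_{\mathbf{Ord}}$, $\textbf{Rub} = \Mgn \times_{\Mgn^{\textup{trop}}} \Sigma_{\mathbf{Rub}}$ established above, the structural results on the cone stacks $\Sigma_{?}$ from Section~\ref{sec: tropical moduli}, and the correspondence (reviewed above) whereby subdivisions of $\Sigma_S$ pull back to log modifications of $S$ and finite-index sublattice refinements of $\Sigma_S$ pull back to root stacks of $S$.

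First I would show $\textbf{Div} \to \Mgn$ is representable by algebraic spaces, by checking that its relative inertia is trivial: an automorphism of a pair $(C \to S, \alpha)$ lying over the identity of the underlying pointed log curve in $\Mgn$ is the identity on the underlying family of pointed curves, hence --- the log structure of a logarithmic curve being the canonical minimal one --- equal to the identity, so a fortiori it fixes $\alpha$. (The same is visible from the explicit description of automorphisms in the proof that $\textbf{Div}$ is the fiber product.) Next, $\Sigma_{\mathbf{Ord}} \to \Sigma_{\mathbf{Div}}$ is a subdivision (Section~\ref{sec: tropical moduli}), so $\textbf{Ord} \to \textbf{Div}$ is a log modification, in particular proper, surjective and representable; combined with the previous sentence this gives that $\textbf{Ord} \to \Mgn$ is representable, in particular of Deligne--Mumford type.

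It then remains to handle $\textbf{Rub} \to \textbf{Ord}$. By Corollary~\ref{cor: sublatticestructure} this map, cone by cone, is the identity on underlying cones and replaces the lattice by the finite-index sublattice generated by the primitive ray generators; hence, by the dictionary and \cite{BV, GMtor}, $\textbf{Rub} \to \textbf{Ord}$ is a root stack --- proper, quasi-finite, with relative inertia a $\mu_n$-type group scheme, which is \'etale, and in particular unramified, in characteristic zero. So $\textbf{Rub} \to \textbf{Ord}$ is of Deligne--Mumford type, and composing with the representable map $\textbf{Ord} \to \Mgn$ proves the first assertion. For the second assertion, I would note that representability of $\textbf{Ord} \to \Mgn$ forces the relative inertia of $\textbf{Rub}$ over $\Mgn$ to coincide with that of $\textbf{Rub}$ over $\textbf{Ord}$, namely with the $\mu_n$-type stabilizers of the root stack; the relative coarse moduli space of $\textbf{Rub}$ over $\Mgn$ is obtained by killing precisely these stabilizers, and --- since the relative coarse space of a root stack over its base is that base --- this yields $\textbf{Ord}$ with structure morphism $\textbf{Rub} \to \textbf{Ord}$. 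Equivalently, this is the image under the algebraization functor $\Sigma \mapsto \Mgn \times_{\Mgn^{\textup{trop}}} \Sigma$ of the observation recorded after Corollary~\ref{cor: sublatticestructure} that $\Sigma_{\mathbf{Ord}}$ is the coarse moduli space of $\Sigma_{\mathbf{Rub}}$ in the sense of \cite[Subsection 3.2]{GMtor}.

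The step I expect to be the main obstacle is identifying $\textbf{Rub} \to \textbf{Ord}$ with a root stack: one must verify that the cone-wise finite-index sublattice inclusions of Corollary~\ref{cor: sublatticestructure} patch into a single globally defined root construction on the cone stack $\Sigma_{\mathbf{Ord}}$ --- so that the tropical--logarithmic dictionary genuinely applies and produces an honest toroidal (iterated) root stack in the sense of \cite{GMtor} --- and then that, under this identification, formation of the relative coarse moduli space over $\Mgn$ behaves as expected. Once this is in place, the remaining ingredients --- triviality of the relative inertia of $\textbf{Div} \to \Mgn$, and representability and properness of log modifications --- are routine.
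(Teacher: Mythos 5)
Your argument is correct and follows the same route as the paper's (one-sentence) proof: you use Corollary~\ref{cor: sublatticestructure}, the cone-stack-to-algebraic-stack dictionary (the ``local charts'' of the paper's phrasing), and \cite[Proposition 3.2.6]{GMtor} to identify $\textbf{Rub}\to\textbf{Ord}$ as the canonical smooth stack over a simplicial cone and $\textbf{Ord}$ as its relative coarse space. The patching worry you flag at the end is precisely what is discharged by working chart-by-chart and invoking the cited result of \cite{GMtor}, which is how the paper handles it.
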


\begin{proof}
This follows from corollary \ref{cor: sublatticestructure} and \cite[Proposition 3.2.6]{GMtor} by observing that the cones in $\Sigma_{\textbf{Ord}},\Sigma_{\textbf{Rub}}$ provide local charts for $\textbf{Ord},\textbf{Rub}$.
\end{proof}

\begin{lemma}
The stacks $\textbf{Div}_{g,A}^\theta$, $\textbf{Ord}_{g,A}^\theta$, $\textbf{Rub}_{g,A}^\theta$ are open substacks of $\mathbf{Div},\mathbf{Ord},\mathbf{Rub}$. 
\end{lemma}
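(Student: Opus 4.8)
The plan is to leverage the identification $\mathbf{Div} = \Mgn \times_{\Mgn^{\textup{trop}}} \Sigma_{\mathbf{Div}}$ (and the analogous identifications for $\mathbf{Ord}$, $\mathbf{Rub}$) established above, and reduce the statement to the corresponding combinatorial fact about cone stacks, namely that $\Sigma_{\mathbf{Div}_{g,A}^\theta}$, $\Sigma_{\mathbf{Ord}_{g,A}^\theta}$, $\Sigma_{\mathbf{Rub}_{g,A}^\theta}$ are subcomplexes (indeed open in the topology on cone stacks given by unions of cones) of $\Sigma_{\mathbf{Div}_{g,n}}$, $\Sigma_{\mathbf{Ord}_{g,n}}$, $\Sigma_{\mathbf{Rub}_{g,n}}$. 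First I would recall from section \ref{sec: tropical moduli} that $\Sigma_{\mathbf{Div}_{g,A}^\theta}$ is by definition the union of those cones $\sigma_{(\Gamma,s)}$ for which $\Gamma$ is quasi-stable relative to its stabilization and $D = A - \mathsf{div}(s)$ is $\theta$-stable; the point is that this collection of cones is closed under passing to faces. A face of $\sigma_{(\Gamma,s)}$ corresponds to a morphism $(\Gamma,s) \to (\overline\Gamma,\overline s)$, i.e.\ an edge contraction, and by the specialization discussion (items (1)--(3) in subsection \ref{sec: tropical moduli} on specialization, together with the compatibility of $\theta$ and $A$ with contractions) quasi-stability and $\theta$-stability of $D$ are preserved under contraction. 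Hence $\Sigma_{\mathbf{Div}_{g,A}^\theta}$ is an open subcomplex, and similarly for $\mathbf{Ord}$ and $\mathbf{Rub}$ once we note that the additional data (an ordering $\kappa$, or a stable equidimensional lift $\Gamma' \to X$) also specializes, as recorded in the excerpt.

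Next I would invoke the dictionary from subsection on tropicalization and tropical operations: an inclusion of a subcomplex $\Sigma \subset \Sigma_S$ lifts to an open immersion $V \subset S$ of the associated log algebraic stacks (Artin fans), and this is compatible with base change. Applying this to $\Sigma_{\mathbf{Div}_{g,A}^\theta} \subset \Sigma_{\mathbf{Div}_{g,n}}$ and pulling back along $\Mgn \to \Mgn^{\textup{trop}}$ (equivalently, using that $\mathbf{Div}_{g,A}^\theta$ is, by construction in \cite{HMPPS} and the theorem above, exactly $\Mgn \times_{\Mgn^{\textup{trop}}} \Sigma_{\mathbf{Div}_{g,A}^\theta}$) gives that $\mathbf{Div}_{g,A}^\theta$ is an open substack of $\mathbf{Div} = \Mgn \times_{\Mgn^{\textup{trop}}} \Sigma_{\mathbf{Div}}$. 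The same argument, verbatim, handles $\mathbf{Ord}_{g,A}^\theta \subset \mathbf{Ord}$ and $\mathbf{Rub}_{g,A}^\theta \subset \mathbf{Rub}$; one only needs the previously established lemma that the preimage of $\Sigma_{\mathbf{Div}_{g,A}^\theta}$ in $\Sigma_{\mathbf{Ord}_{g,n}}$ is $\Sigma_{\mathbf{Ord}_{g,A}^\theta}$ and similarly for $\mathbf{Rub}$, so that the three open substacks are mutually compatible under the forgetful maps.

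I expect the only genuine subtlety — the "main obstacle" — to be checking carefully that the defining conditions (quasi-stability of $\Gamma$ relative to $\Gamma^{\mathsf{st}}$, and $\theta$-stability of $D = A - \mathsf{div}(s)$) really are stable under face passage, i.e.\ under edge contraction of the ambient graph; this is where the genericity of $\theta$ and the additivity of $\theta$ and $A$ with respect to contractions (subsection \ref{subsection:NumStabCond} and the specialization discussion) are essential, since a priori contracting an edge could destroy strictness of the stability inequalities. Once that combinatorial closure property is in hand, everything else is a formal consequence of the Artin fan formalism: open subcomplexes of cone stacks correspond to open substacks, and this is preserved by the fiber product with $\Mgn$. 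The remaining bookkeeping — that the open substacks for $\mathbf{Div}$, $\mathbf{Ord}$, $\mathbf{Rub}$ are compatible under the forgetful maps — is immediate from the preimage lemma already proved.
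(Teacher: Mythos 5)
Your argument is exactly the one the paper has in mind: the paper states this lemma without a separate proof because it is treated as an immediate consequence of the dictionary laid out in the section on tropicalization (inclusion of a subcomplex of the cone stack $\leftrightarrow$ open immersion of the corresponding log stacks, stable under base change along $\Mgn \to \Mgn^{\textup{trop}}$), together with the definition of $\Sigma_{\mathbf{Div}_{g,A}^\theta}$, $\Sigma_{\mathbf{Ord}_{g,A}^\theta}$, $\Sigma_{\mathbf{Rub}_{g,A}^\theta}$ as subcomplexes in the ``Carving out Small Subcomplexes'' subsection and the preceding preimage lemma. You correctly isolate the only non-formal ingredient, namely that the defining conditions (quasi-stability of $\Gamma$ and $\theta$-stability of $D = A - \mathsf{div}(s)$, plus the extra ordering/equidimensional data) are closed under edge contraction, which the paper likewise treats as given --- it appeals to the specialization discussion and to \cite[Theorem 23]{HMPPS} for the fact that $\Sigma_{\mathbf{Div}_{g,A}^\theta}$ is a genuine subcomplex rather than re-verifying it.
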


On the other hand, we can restrict the natural map $\mathbf{Div}_{g,n} \to \Mgn$ to $\mathbf{Div}_{g,A}^\theta$, and also the composition $\mathbf{Rub}_{g,A}^\theta \to \mathbf{Div}_{g,A}^\theta$. 

%Combining the two results above gives: 

%\begin{corollary}
%The stack $\textbf{Rub}_{g,n}^\theta$ is proper and non-singular. The total space of the universal curve $\pi: \mathcal{C} \to \textbf{Rub}_{g,n}^\theta$ is quasi-smooth.
%\end{corollary}

For the sake of completeness, we spell out the functor of points of $\textbf{Div}_{g,A}^\theta$ and $\textbf{Rub}_{g,A}^\theta$. It is simpler to do so in the category $\textbf{LogSch}$. For a log scheme $S$, the $S$ points of $\mathbf{Div}_{g,A}^\theta$ consist of 
\begin{itemize}
    \item A quasi-stable log curve $C \to S$. 
    \item A piecewise linear function, vanishing along the first marking, such that 
    $$
    A-\mathsf{div}(\alpha) = D
    $$
    is $\theta$-stable. 
\end{itemize}

The $S$-points of $\textbf{Rub}_{g,A}^\theta$ are, \emph{in addition to the above}, 

\begin{itemize}
    \item A semistable model $C' \to C$ over $S$, a tropical target $X \to S$, and an equidimensional map 
    $$
    C' \to X
    $$
    This data is required to be stable, i.e. $C',X$ are minimal with this property. 
\end{itemize}

We can retrict the map $\mathbf{Div}_{g,n} \to \Mgn$ to its open substack $\mathbf{Div}_{g,A}^\theta$. The content of \ref{cor:indexsubstructure} and the discussion preceeding it is then that the restriction $\mathbf{Div}_{g,A}^\theta \to \Mgn$ factors isomorphically through a log modification $\DMgn^\theta \to \Mgn$; and the restriction of $\mathbf{Rub}_{g,n} \to \Mgn$ to $\mathbf{Rub}_{g,A}^\theta$ factors through a log modification followed by a root $\RMgn^\theta \to \Mgn$. 

\begin{theorem}
\label{theorem: propertiesofrubtheta}
The stack $\RMgn^\theta$ is non-singular, and the map $\RMgn^\theta \to \Mgn$ is proper and of DM-type. The universal curve $\mathcal{C} \to \RMgn^\theta$ is quasi-smooth, and carries a universal line bundle
$$
\mathcal{L} = \omega^k(\sum a_ix_i) \otimes \mathcal{O}(\alpha)
$$
which is $\theta$-stable. 
\end{theorem}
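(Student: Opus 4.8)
The plan is to deduce every assertion by restricting to $\RMgn^\theta$ the corresponding property already established for $\mathbf{Rub}$, together with the combinatorial description of the stabilization map from Corollary \ref{cor:indexsubstructure}. The starting point is the identification of $\RMgn^\theta$ with the open substack $\mathbf{Rub}_{g,A}^\theta \subset \mathbf{Rub}$: using the equality $\mathbf{Rub} = \Mgn \times_{\Mgn^{\textup{trop}}} \Sigma_{\mathbf{Rub}}$ and Corollary \ref{cor:indexsubstructure}, the map $\mathbf{Rub}_{g,A}^\theta \to \Mgn$ factors isomorphically through the log modification followed by root stack that defines $\RMgn^\theta \to \Mgn$. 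So it suffices to transport properties across this identification.

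For non-singularity: $\mathbf{Rub}$ is non-singular, hence so is its open substack $\mathbf{Rub}_{g,A}^\theta = \RMgn^\theta$. (Intrinsically one could instead note that the cones of $\Sigma_{\mathbf{Rub}_{g,A}^\theta}$ are smooth — isomorphic to some $\mathbb{N}^k$, as for all of $\Sigma_{\mathbf{Rub}}$ — so the log modification part of $\RMgn^\theta \to \Mgn$ is smooth and the subsequent root stack along the resulting normal crossings boundary preserves smoothness.) The map $\RMgn^\theta \to \Mgn$ is of DM-type because $\mathbf{Rub} \to \Mgn$ is and this is inherited over an open substack of the source. It is proper because, by Corollary \ref{cor:indexsubstructure} and the discussion preceding it, it factors as a log modification followed by a root stack, and log modifications and root stacks are both proper, so the composite is.

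For the universal curve and line bundle: $\mathcal{C} \to \RMgn^\theta$ is the restriction of the universal curve of $\mathbf{Rub}$, which is quasi-smooth; since quasi-smoothness (i.e. $\mathbb{Q}$-factoriality) is inherited by open substacks, $\mathcal{C}$ is quasi-smooth. For $\mathcal{L}$, I would proceed as follows: on $\mathbf{Div}$, hence on $\RMgn^\theta$ which lies over it, the tautological piecewise linear function $\alpha$ on the universal curve defines a line bundle $\mathcal{O}(\alpha)$ by the standard construction (pulling back the exact sequence $0 \to \cO^* \to M^{\gp} \to \o M^{\gp} \to 0$ on $\mathcal{C}$ along $\alpha$), and set $\mathcal{L} = \omega_{\mathcal{C}/\RMgn^\theta}^k(\sum a_i x_i) \otimes \mathcal{O}(\alpha)$. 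By Example \ref{example: Adivisor} the multidegree of $\omega^k(\sum a_i x_i)$ is $A$, so the multidegree of $\mathcal{L}$, read off on the quasi-stable contraction of $\mathcal{C}$, is $A - \mathsf{div}(\alpha) = D$, which is $\theta$-stable by the very definition of the locus $\mathbf{Rub}_{g,A}^\theta$; hence $\mathcal{L}$ is fiberwise $\theta$-stable and determines the Abel--Jacobi section $\RMgn^\theta \to \mathrm{Pic}^\theta$.

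I do not expect a serious obstacle, since the substance lies in the results already cited. The two points demanding genuine care are: (i) making the identification $\RMgn^\theta = \mathbf{Rub}_{g,A}^\theta$ precise through the non-representable root stack, so that non-singularity, quasi-smoothness, properness, and DM-type really do transfer between the two presentations — this is exactly what the word ``isomorphically'' in Corollary \ref{cor:indexsubstructure} provides; and (ii) the bookkeeping for $\mathcal{L}$, in particular clarifying on which model of the universal curve $\mathcal{L}$ is most naturally defined — the quasi-stable curve $C$ versus its equidimensional destabilization $\mathcal{C} = \widetilde{C}_{g,A}^\theta$ — and checking that the displayed formula is compatible with pullback along $\mathcal{C} \to C$. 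The key fact there is that $\alpha$ acquires no new breakpoints at the vertices introduced in passing to the equidimensional model, so the associated twist has degree zero on the contracted components and $\mathcal{L}$ descends to the $\theta$-stable bundle of multidegree $D$ on $C$; this is also what justifies the (slightly abusive) phrase ``$\mathcal{L}$ is $\theta$-stable''.
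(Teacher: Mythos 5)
Your proposal is correct and follows the same route the paper takes implicitly: the theorem is stated without a separate proof precisely because it is the restriction to the open substack $\mathbf{Rub}_{g,A}^\theta \cong \RMgn^\theta$ of the global properties already established for $\mathbf{Rub}$ (non-singularity, quasi-smoothness of the universal curve, DM-type over $\Mgn$), together with properness coming from the factorization as a log modification followed by a root stack described just before the theorem. Your remarks (i) and (ii) on transferring properties across the root stack and on the model on which $\mathcal{L}$ lives are exactly the right points of care and are handled as the paper would intend.
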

The line bundle $\mathcal{L}$ in particular gives an Abel-Jacobi section 
$$
\RMgn^\theta \to \textup{Pic}^\theta
$$
and can be used to compute the DR cycle: the ``universal DR formula" of \cite{BHPSS} applies, as in \cite[Theorem A]{HMPPS}.

\begin{remark}
One can use the Abel-Jacobi section  
$$
\DMgn^\theta \to \textup{Pic}^\theta
$$
to pull back the multidegree $0$ universal Jacobian $\textup{Jac}$. The resulting space 
$$
\Mgn^\theta \times_{\textup{Pic}^\theta} \textup{Jac}
$$
is the space $M^{\diamond}$ of \cite{Holmes}. Its pullback to $\RMgn^\theta$ is a desingularization, denoted by $\mathsf{tDR}$ in \cite{MRan}. Pulling back further, by replacing $\textup{Jac}$ with its $0$ section $0$ gives $$
\mathsf{DRL} = \DMgn^\theta \times_{\textup{Pic}^\theta}0.
$$
This is a compact locus which supports the cycle $\mathsf{DR}_{g,A}^k$. When $k=0$, the further pullback 
$$
\RMgn^\theta \times_{\textup{Pic}^\theta} 0
$$
can be identified with the rubber version of Bumsig Kim's space of log stable maps to expansions of $\mathbb{P}^1$. In fact, the tropical targets $X$ are the cone stacks associated to expansions. But notice that in the definition of $\textbf{Rub}$, we demand that curves map to tropical targets and not expansions -- it is precisely the point that there a lot more maps to tropical targets than algebraic ones, and it is in this space that we carve out the smooth spaces $\RMgn^\theta$. The space of maps to algebraic targets is of much smaller dimension. 
\end{remark}

\section{Algorithms} Let $\theta$ be a generic stability condition, and $A$ a tropical divisor. We explain how to construct the cone stack of $\RMgn^\theta$ from that of $\Mgn$ algorithmically. The algorithm first constructs $\DMgn^\theta$, precisely as in \cite{HMPPS}. It then finds the cones in $\RMgn^\theta$ by unpacking the discussion of \ref{sec: tropical moduli}. It suffices to work cone by cone in $\Mgn^\textup{trop}$. We thus fix a stable graph $\Gamma$ of type $g,n$. We will write $\Sigma_{\Gamma}$ for the cone corresponding to $\Gamma$ in $\Mgn$, $\Sigma_{\Gamma}^\theta$ for the cone in $\DMgn^\theta$, and $\widetilde{\Sigma}_{\Gamma}^\theta$ for the cone in $\RMgn^\theta$.   
\begin{center}
\underline{Algorithm}:
\end{center}
\begin{itemize}
\item Step 1: List all acyclic flows $s$ on quasi-stable models of $\Gamma$ (without length assignments) with tropical divisor $\mathsf{div}(s) = A -D$. There is a finite number of possible such flows. \\

\item Step 2: For each such flow, find the $x \in \Sigma_\Gamma$ such that $\left \langle s, \gamma \right \rangle_x = 0$ for any $\gamma \in H_1(\Gamma)$. The collection of such $x$ for a specific flow is a cone of $\Sigma_{\Gamma}^\theta$. In other words, $\Sigma_{\Gamma}^\theta$ is the subdivision of $S$ into the cones where the various acyclic flows lift to actual twists. \\

\item Step 3: Over a cone of $\Sigma_\Gamma^\theta$ corresponding to $(\Gamma',\alpha)$, list all possible orderings $\kappa$ extending $\alpha$. Equivalently, lift the data $\Gamma',\alpha$ to stable equidimensional flows $\Gamma'' \to X$. There is, again, only a finite number of such data. \\

\item Step 4: Find the vectors $x \in \Sigma_\Gamma^\theta$ that realize a given order $\kappa$. This means orienting the edges according to $\kappa$ this total order\footnote{contracted edges do not contribute and can be ignored here.}, choosing a minimal vertex $v$, and an oriented path $P_{v \to w}$ for every $w \in \Gamma'$; for all given inequalities $\alpha(w) < \alpha(w)$ in the given order, find $x$ such that 
$$
\sum_{\vec{e} \in P_{v \to w}} s(\vec{e})\ell_e \le \sum_{\vec{e} \in P_{v \to u}} s(\vec{e})\ell_e.
$$
This determines the cones $\sigma \in \widetilde{\Sigma}_{\Gamma}^\theta$.\\

\item Step 5: For each cone $\sigma$ in $\Sigma_{\Gamma}^\theta$, take a generating set for $\sigma \cap N_{\Sigma_{\Gamma}^\theta}$. For $x$ in this generating set, find the minimal integral multiple $kx$ of $x$ for which the quantities 
$$
\frac{\alpha(u)-\alpha(v)}{s(f)},\frac{\alpha(w)-\alpha(v)}{s(f)}
$$
are integers when evaluated on $kx$ where: $v$ ranges through all vertices of $\Gamma'$: $w$ is any vertex with $\alpha(w)$ the value consecutive to $\alpha(v)$; and $f$ is any edge oriented away from $v$ with other endpoint $u$. The sublattice generated by the $kx$ is the integral structure of $\widetilde{\Sigma}_{\Gamma}^\theta$. 

Alternatively, the integral structure can be determined as the sublattice 
$$
\oplus_{\rho \in \sigma(1)}\NN x_{\rho} \subset \NN^{E(\Gamma)}
$$
generated by the primitive vectors $x_\rho$ along the one dimensional faces $\rho$ of $\sigma$. 
\end{itemize}

\begin{remark}
Steps $2$ and $4$ are the hardest steps to carry out, as they involve solving a collection of linear inequalities. However, as is explained in \cite{HMPPS}, the difficulty of Step $2$ is deceptive. In fact, \emph{it suffices} to solve the equations $\left \langle s, \gamma \right \rangle_x = 0$ only for acyclic flows $s$ supported on $\Gamma$ rather than a quasi-stable model. The system of inequalities then simplifies significantly, as it reduces to a linear system of \emph{equalities}. These flows determine the minimal cones of $\Sigma_Gamma^\theta$, and all other cones are determined by how twists specialize -- i.e., via the combinatorics of specializations rather than the tropical geometry. The same is true for step $4$: it suffices to solve the inequalities for total orders with the fewest possible strict inequalities; the other cones are determined by specializations. Thus, tropical geometry only enters to determine the shallowest strata. Afterwards, combinatorics takes over.  
\end{remark}

\section{Example}
We present an example of the construction. We use the ramification vector $A = (-4,3,1)$ on $\overline{\mathcal{M}}_{1,3}$, and work out the subdivision of the cone $\mathbb{R}_{\ge 0}^3 = \left\langle \ell_1,\ell_2,\ell_3 \right\rangle$ corresponding to the triangular graph $\Gamma$ that consists of three vertices $v_1,v_2,v_3$ with three edges $e_1,e_2,e_3$ between them, depicted below: 

\[
\begin{tikzpicture}
\coordinate(u1) at (-1,0){};
\node[left] at (u1){$-4$};
\node[draw,circle,inner sep=1pt,fill] at (u1){};
\coordinate(u2) at (1,0){};
\node[right] at (u2){$1$};
\node[draw,circle,inner sep=1pt,fill] at (u2){};
\coordinate(u3) at (0,2){};
\node[draw,circle,inner sep=1pt,fill] at (u3){};
\node[above] at (u3){$3$};
\draw[-,thick](u1)--(u2)--(u3)--(u1);
\node[below] at (0,0){$e_1$};
\node[left] at (-1/2,1){$e_2$};
\node[right] at (1/2,1){$e_3$};
%\coordinate(u3) at (0,1){};
%\node[above] at (u3){};
%\node[draw,circle,inner sep=1pt,fill] at (u3){};
\end{tikzpicture}
\]
We choose a small perturbation of $\theta = 0$ which is negative on the component containing the first marking and positive on the others. The acyclic flows balancing $A-D$, as $D$ ranges through $\theta$-stable divisors on $\Gamma$ are then given in the following list (with slopes depicted in red):

\[
\begin{tikzpicture}
[decoration={markings, 
    mark= at position 0.5 with {\arrow{stealth}},
    }
] 
\node at (-7,1){$D=(0,0,0):$};

\coordinate(u1) at (-1,0){};
\node[left] at (u1){$-4$};
\node[draw,circle,inner sep=1pt,fill] at (u1){};
\coordinate(u2) at (1,0){};
\node[right] at (u2){$1$};
\node[draw,circle,inner sep=1pt,fill] at (u2){};
\coordinate(u3) at (0,2){};
\node[draw,circle,inner sep=1pt,fill] at (u3){};
\node[above] at (u3){$3$};
\draw[-,thick, postaction={decorate}](u1)--(u2);
\draw[-, thick, postaction={decorate}](u2) --(u3); 
\draw[-,thick,postaction={decorate}](u1)--(u3);
\node[below] at (0,0){\textcolor{red}{$2$}};
\node[right] at (.5,1){\textcolor{red}{$1$}}; 
\node[left] at (-.5,1){\textcolor{red}{$2$}};
%\coordinate(u3) at (0,1){};
%\node[above] at (u3){};
%\node[draw,circle,inner sep=1pt,fill] at (u3){};

\coordinate(v1) at (-5,0){};
\node[left] at (v1){$-4$};
\node[draw,circle,inner sep=1pt,fill] at (v1){};
\coordinate(v2) at (-3,0){};
\node[right] at (v2){$1$};
\node[draw,circle,inner sep=1pt,fill] at (v2){};
\coordinate(v3) at (-4,2){};
\node[draw,circle,inner sep=1pt,fill] at (v3){};
\node[above] at (v3){$3$};
\draw[-,thick,postaction={decorate}](v1)--(v2);
\draw[-, thick,postaction={decorate}](v2) --(v3); 
\draw[-,thick,postaction={decorate}](v1)--(v3);

\node[below] at (-4,0){\textcolor{red}{$1$}};
\node[right] at (-3.5,1){\textcolor{red}{$0$}}; 
\node[left] at (-4.5,1){\textcolor{red}{$3$}};

%\coordinate(u3) at (0,1){};
%\node[above] at (u3){};
%\node[draw,circle,inner sep=1pt,fill] at (u3){};

\coordinate(w1) at (3,0){};
\node[left] at (w1){$-4$};
\node[draw,circle,inner sep=1pt,fill] at (w1){};
\coordinate(w2) at (5,0){};
\node[right] at (w2){$1$};
\node[draw,circle,inner sep=1pt,fill] at (w2){};
\coordinate(w3) at (4,2){};
\node[draw,circle,inner sep=1pt,fill] at (w3){};
\node[above] at (w3){$3$};
\draw[-,thick,postaction={decorate}](w1)--(w2);
\draw[-,thick,postaction={decorate}](w2)--(w3);
\draw[-,thick,postaction={decorate}](w1)--(w3);
\node[below] at (4,0){\textcolor{red}{$3$}};
\node[right] at (4.5,1){\textcolor{red}{$2$}}; 
\node[left] at (3.5,1){\textcolor{red}{$1$}};

%\coordinate(u3) at (0,1){};
%\node[above] at (u3){};
%\node[draw,circle,inner sep=1pt,fill] at (u3){};

\node at (-7,-2){$D=(-1,1,0):$};

\coordinate(u1) at (1,-3){};
\node[left] at (u1){$-3$};
\node[draw,circle,inner sep=1pt,fill] at (u1){};
\coordinate(u2) at (3,-3){};
\node[right] at (u2){$1$};
\node[draw,circle,inner sep=1pt,fill] at (u2){};
\coordinate(u3) at (2,-1){};
\node[draw,circle,inner sep=1pt,fill] at (u3){};
\node[above] at (u3){$2$};
\draw[-,thick, postaction={decorate}](u1)--(u2);
\draw[-, thick, postaction={decorate}](u2) --(u3); 
\draw[-,thick,postaction={decorate}](u1)--(u3);
\node[below] at (2,-3){\textcolor{red}{$2$}};
\node[right] at (2.5,-2){\textcolor{red}{$1$}}; 
\node[left] at (1.5,-2){\textcolor{red}{$1$}};
%\coordinate(u3) at (0,1){};
%\node[above] at (u3){};
%\node[draw,circle,inner sep=1pt,fill] at (u3){};

\coordinate(v1) at (-3,-3){};
\node[left] at (v1){$-3$};
\node[draw,circle,inner sep=1pt,fill] at (v1){};
\coordinate(v2) at (-1,-3){};
\node[right] at (v2){$1$};
\node[draw,circle,inner sep=1pt,fill] at (v2){};
\coordinate(v3) at (-2,-1){};
\node[draw,circle,inner sep=1pt,fill] at (v3){};
\node[above] at (v3){$2$};
\draw[-,thick,postaction={decorate}](v1)--(v2);
\draw[-, thick,postaction={decorate}](v2) --(v3); 
\draw[-,thick,postaction={decorate}](v1)--(v3);

\node[below] at (-2,-3){\textcolor{red}{$1$}};
\node[right] at (-1.5,-2){\textcolor{red}{$0$}}; 
\node[left] at (-2.5,-2){\textcolor{red}{$2$}};

\node at (-7,-5){$D=(-1,0,1):$};

\coordinate(u1) at (1,-6){};
\node[left] at (u1){$-3$};
\node[draw,circle,inner sep=1pt,fill] at (u1){};
\coordinate(u2) at (3,-6){};
\node[right] at (u2){$0$};
\node[draw,circle,inner sep=1pt,fill] at (u2){};
\coordinate(u3) at (2,-4){};
\node[draw,circle,inner sep=1pt,fill] at (u3){};
\node[above] at (u3){$3$};
\draw[-,thick, postaction={decorate}](u1)--(u2);
\draw[-, thick, postaction={decorate}](u2) --(u3); 
\draw[-,thick,postaction={decorate}](u1)--(u3);
\node[below] at (2,-6){\textcolor{red}{$2$}};
\node[right] at (2.5,-5){\textcolor{red}{$2$}}; 
\node[left] at (1.5,-5){\textcolor{red}{$1$}};
%\coordinate(u3) at (0,1){};
%\node[above] at (u3){};
%\node[draw,circle,inner sep=1pt,fill] at (u3){};

\coordinate(v1) at (-3,-6){};
\node[left] at (v1){$-3$};
\node[draw,circle,inner sep=1pt,fill] at (v1){};
\coordinate(v2) at (-1,-6){};
\node[right] at (v2){$0$};
\node[draw,circle,inner sep=1pt,fill] at (v2){};
\coordinate(v3) at (-2,-4){};
\node[draw,circle,inner sep=1pt,fill] at (v3){};
\node[above] at (v3){$3$};
\draw[-,thick,postaction={decorate}](v1)--(v2);
\draw[-, thick,postaction={decorate}](v2) --(v3); 
\draw[-,thick,postaction={decorate}](v1)--(v3);

\node[below] at (-2,-6){\textcolor{red}{$1$}};
\node[right] at (-1.5,-5){\textcolor{red}{$1$}}; 
\node[left] at (-2.5,-5){\textcolor{red}{$2$}};

\end{tikzpicture}
\]

The seven flows above contribute seven codimension $g=1$ cells in the subdivision $S^\theta$ of $\mathbb{R}_{\ge 0}^3$. For example, the first listed flow contributes the wall 
$$
\ell_1 = 3\ell_2.
$$
In general, to complete the subdivision, it is necessary to consider all $\theta$-semistable divisors $D$ on quasi-stable models of $\Gamma$ as well. There are $8$ such flows, but in this case we do not need to list them, as since $g=1$ the $7$ codimension $1$ walls in fact determine the subdivision\footnote{This property is unique to $g=1$. See \cite[Section 4.3]{HMPPS} for a higher genus example.}. The desired subdivision looks as follows (we present the induced subdivision of the triangle obtained by cutting $\mathbb{R}_{\ge 0}^3$ with the hyperplane $\ell_1+\ell_2+\ell_3 = 1$): 

\[
\begin{tikzpicture}[scale=3]
\coordinate(u1) at (1,0,0){};
\node[draw,circle,inner sep=1pt,fill] at (u1){};
\node[right] at (u1){$\ell_1=1$};
\coordinate(u2) at (0,1,0){};
\node[draw,circle,inner sep=1pt,fill] at (u2){};
\node[above] at (u2){$\ell_2=1$};
\coordinate(u3) at (0,0,1){};
\node[draw,circle,inner sep=1pt,fill] at (u3){};
\node[left] at (u3){$\ell_3=1$};
\draw[-,thick](u1)--(u2)--(u3)--(u1);

\coordinate(v1) at (3/4,1/4,0){};
\draw[-,red,thick](v1)--(u3);
\coordinate(v2) at (2/3,1/3,0){};
\draw[-,red,thick](v2)--(u3);
\coordinate(v3) at (0,1/3,2/3){};
\draw[-,red,thick](v2)--(v3);
\coordinate(v4) at (1/2,1/2,0){};
\draw[-,red,thick](v3)--(v4);
\coordinate(v5) at (0,1/2,1/2){};
\coordinate(v6) at (1/3,2/3,0){};
\draw[-,red,thick](v5)--(v6);
\coordinate(v7) at (0,2/3,1/3);
\coordinate(v8) at (1/4,3/4,0);
\draw[-,red,thick](v6)--(v7);
\draw[-,red,thick](v7)--(v8);
\draw[fill=blue!50, nearly transparent](v3)--(v4)--(v6)--(v5)--cycle;

\end{tikzpicture}
\]

The shaded (slice of the) cone is special in $\Sigma_\Gamma^\theta$, as it is not simplicial. It is the region corresponding to the twist 

\[
\begin{tikzpicture}
[decoration={markings, 
    mark= at position 0.5 with {\arrow{stealth}},
    }
] 
\coordinate(v1) at (-1,0){};
\node[left] at (v1){$-3$};
\node[draw,circle,inner sep=1pt,fill] at (v1){};
\coordinate(v2) at (1,0){};
\node[right] at (v2){$1$};
\node[draw,circle,inner sep=1pt,fill] at (v2){};
\coordinate(v3) at (0,2){};
\node[draw,circle,inner sep=1pt,fill] at (v3){};
\node[above] at (v3){$3$};
\coordinate(v4) at (-1/2,1){};
\node[left] at (v4){$-1$};
\node[draw,circle,inner sep=1pt,fill] at (v4){};
\draw[-,thick,postaction={decorate}](v1)--(v4);
\draw[-, thick,postaction={decorate}](v4) --(v3); 
\draw[-,thick,postaction={decorate}](v1)--(v2);
\draw[-,thick,postaction={decorate}](v2)--(v3);

\node[left] at (-3/4,1/2){\textcolor{red}{$1$}};
\node[left] at (-1/4,3/2){\textcolor{red}{$2$}}; 
\node[below] at (0,0){\textcolor{red}{$2$}};
\node[right] at (1/2,1){\textcolor{red}{$1$}};

\end{tikzpicture}
\]
The cone is subdivided further in $\widetilde{\Sigma
}_\Gamma^\theta$, according to whether the piecewise linear function $\alpha$ is greater on the exceptional vertex $u$, or the vertex $v_2$. This corresponds to the three ways to make $\alpha$ equidimensional, namely \footnote{we have replaced the degree of $\mathsf{div}(\alpha)$ with the name of the vertex for display purposes}

\[
\begin{tikzpicture}
[decoration={markings, 
    mark= at position 0.5 with {\arrow{stealth}},
    }
] 
\coordinate(v1) at (-1,0){};
\node[left] at (v1){$v_1$};
\node[draw,circle,inner sep=1pt,fill] at (v1){};
\coordinate(v2) at (1,0){};
\node[right] at (v2){$v_2$};
\node[draw,blue,circle,inner sep=1pt,fill] at (v2){};
\coordinate(v3) at (0,2){};
\node[draw,circle,inner sep=1pt,fill] at (v3){};
\node[above] at (v3){$v_3$};
\coordinate(v4) at (-1/2,1){};
\node[left] at (v4){$u$};
\node[draw,orange,circle,inner sep=1pt,fill] at (v4){};
\draw[-,thick,postaction={decorate}](v1)--(v4);
\draw[-, thick,postaction={decorate}](v4) --(v3); 
\draw[-,thick,postaction={decorate}](v1)--(v2);
\draw[-,thick,postaction={decorate}](v2)--(v3);

\draw[->, thick](2,1)--(3,1);

\node[left] at (-3/4,1/2){\textcolor{red}{$1$}};
\node[left] at (-1/4,3/2){\textcolor{red}{$2$}}; 
\node[below] at (0,0){\textcolor{red}{$2$}};
\node[right] at (1/2,1){\textcolor{red}{$1$}};
\node[below] at (0,-1/2){$\Gamma''$};

\node[draw,orange,circle,inner sep=1pt, fill] at (1/2,0){};
\node[draw,blue,circle,inner sep=1pt,fill] at (-1/6,10/6){};

\coordinate(x4) at (4,2){};
\coordinate(x3) at (4,4/3){};
\coordinate(x2) at (4,3/4){};
\coordinate(x1) at (4,0){};
\draw[-,thick](x1)--(x4);
\node[right] at (x1){$\alpha(v_1)$};
\node[draw,circle,inner sep=1pt,fill] at (x1){};
\node[below] at (4,-1/2){$X$};

\node[right] at (x4){$\alpha(v_3)$};
\node[draw,circle,inner sep=1pt,fill] at (x4){};

\node[right] at (x3){$\alpha(v_2)$};
\node[draw,blue,circle,inner sep=1pt,fill] at (x3){};

\node[right] at (x2){$\alpha(u)$};
\node[draw,orange, circle,inner sep=1pt,fill] at (x2){};

\end{tikzpicture}
\]

or

\[
\begin{tikzpicture}
[decoration={markings, 
    mark= at position 0.5 with {\arrow{stealth}},
    }
] 
\coordinate(v1) at (-1,0){};
\node[left] at (v1){$v_1$};
\node[draw,circle,inner sep=1pt,fill] at (v1){};
\coordinate(v2) at (1,0){};
\node[right] at (v2){$v_2$};
\node[draw,blue,circle,inner sep=1pt,fill] at (v2){};
\coordinate(v3) at (0,2){};
\node[draw,circle,inner sep=1pt,fill] at (v3){};
\node[above] at (v3){$v_3$};
\coordinate(v4) at (-1/2,1){};
\node[left] at (v4){$u$};
\node[draw,orange,circle,inner sep=1pt,fill] at (v4){};
\draw[-,thick,postaction={decorate}](v1)--(v4);
\draw[-, thick,postaction={decorate}](v4) --(v3); 
\draw[-,thick,postaction={decorate}](v1)--(v2);
\draw[-,thick,postaction={decorate}](v2)--(v3);

\draw[->, thick](2,1)--(3,1);

\node[left] at (-3/4,1/2){\textcolor{red}{$1$}};
\node[left] at (-1/4,3/2){\textcolor{red}{$2$}}; 
\node[below] at (0,0){\textcolor{red}{$2$}};
\node[right] at (1/2,1){\textcolor{red}{$1$}};
\node[below] at (0,-1/2){$\Gamma''$};

\node[draw,orange,circle,inner sep=1pt, fill] at (2/3,2/3){};
\node[draw,blue,circle,inner sep=1pt,fill] at (-2/3,2/3){};

\coordinate(x4) at (4,2){};
\coordinate(x3) at (4,4/3){};
\coordinate(x2) at (4,3/4){};
\coordinate(x1) at (4,0){};
\draw[-,thick](x1)--(x4);
\node[right] at (x1){$\alpha(v_1)$};
\node[draw,circle,inner sep=1pt,fill] at (x1){};
\node[below] at (4,-1/2){$X$};

\node[right] at (x4){$\alpha(v_3)$};
\node[draw,circle,inner sep=1pt,fill] at (x4){};

\node[right] at (x2){$\alpha(v_2)$};
\node[draw,orange,circle,inner sep=1pt,fill] at (x3){};

\node[right] at (x3){$\alpha(u)$};
\node[draw,blue, circle,inner sep=1pt,fill] at (x2){};

\end{tikzpicture}
\]

or, the degenerate case (note that no subdivision of $\Gamma'$ is required here)

\[
\begin{tikzpicture}
[decoration={markings, 
    mark= at position 0.5 with {\arrow{stealth}},
    }
] 
\coordinate(v1) at (-1,0){};
\node[left] at (v1){$v_1$};
\node[draw,circle,inner sep=1pt,fill] at (v1){};
\coordinate(v2) at (1,0){};
\node[right] at (v2){$v_2$};
\node[draw,blue,circle,inner sep=1pt,fill] at (v2){};
\coordinate(v3) at (0,2){};
\node[draw,circle,inner sep=1pt,fill] at (v3){};
\node[above] at (v3){$v_3$};
\coordinate(v4) at (-1/2,1){};
\node[left] at (v4){$u$};
\node[draw,orange,circle,inner sep=1pt,fill] at (v4){};
\draw[-,thick,postaction={decorate}](v1)--(v4);
\draw[-, thick,postaction={decorate}](v4) --(v3); 
\draw[-,thick,postaction={decorate}](v1)--(v2);
\draw[-,thick,postaction={decorate}](v2)--(v3);

\draw[->, thick](2,1)--(3,1);

\node[left] at (-3/4,1/2){\textcolor{red}{$1$}};
\node[left] at (-1/4,3/2){\textcolor{red}{$2$}}; 
\node[below] at (0,0){\textcolor{red}{$2$}};
\node[right] at (1/2,1){\textcolor{red}{$1$}};
\node[below] at (0,-1/2){$\Gamma'$};

\coordinate(x4) at (4,2){};
\coordinate(x3) at (4,1){};
\coordinate(x2) at (4,1){};
\coordinate(x1) at (4,0){};
\draw[-,thick](x1)--(x4);
\node[right] at (x1){$\alpha(v_1)$};
\node[draw,circle,inner sep=1pt,fill] at (x1){};
\node[below] at (4,-1/2){$X$};

\node[right] at (x4){$\alpha(v_3)$};
\node[draw,circle,inner sep=1pt,fill] at (x4){};

\node[right] at (x2){$\alpha(v_2)=\alpha(u)$};
\node[draw,olive,circle,inner sep=1pt,fill] at (x3){};

\end{tikzpicture}
\]

More concretely, if the edge $e_2$ has been subdivided as $\ell_2' + \ell_2''$, with $\ell_2'$ the length of the edge connecting $v_1$ to the exceptional vertex, the cone is subdivided along the hyperplane 
$$
\ell_2' = 2\ell_1.
$$
This yields the simplicial subdivision
\[
\begin{tikzpicture}[scale=3]
\coordinate(u1) at (1,0,0){};
\node[draw,circle,inner sep=1pt,fill] at (u1){};
\coordinate(u2) at (0,1,0){};
\node[draw,circle,inner sep=1pt,fill] at (u2){};
\coordinate(u3) at (0,0,1){};
\node[draw,circle,inner sep=1pt,fill] at (u3){};
\draw[-,thick](u1)--(u2)--(u3)--(u1);

\coordinate(v1) at (3/4,1/4,0){};
\draw[-,red,thick](v1)--(u3);
\coordinate(v2) at (2/3,1/3,0){};
\draw[-,red,thick](v2)--(u3);
\coordinate(v3) at (0,1/3,2/3){};
\draw[-,red,thick](v2)--(v3);
\coordinate(v4) at (1/2,1/2,0){};
\draw[-,red,thick](v3)--(v4);
\coordinate(v5) at (0,1/2,1/2){};
\coordinate(v6) at (1/3,2/3,0){};
\draw[-,red,thick](v5)--(v6);
\coordinate(v7) at (0,2/3,1/3);
\coordinate(v8) at (1/4,3/4,0);
\draw[-,red,thick](v6)--(v7);
\draw[-,red,thick](v7)--(v8);
\draw[-,blue,thick](v3)--(v6);
\draw[fill=blue!50, nearly transparent](v3)--(v4)--(v6)--(v5)--cycle;

\end{tikzpicture}
\]

Next, we determine the integral structure of the two maximal cones. In fact, the one corresponding to the second equidimensional twist has the induced integral structure, so we describe it only for the first twist

\[
\begin{tikzpicture}
[decoration={markings, 
    mark= at position 0.5 with {\arrow{stealth}},
    }
] 
\coordinate(v1) at (-1,0){};
\node[left] at (v1){$v_1$};
\node[draw,circle,inner sep=1pt,fill] at (v1){};
\coordinate(v2) at (1,0){};
\node[right] at (v2){$v_2$};
\node[draw,blue,circle,inner sep=1pt,fill] at (v2){};
\coordinate(v3) at (0,2){};
\node[draw,circle,inner sep=1pt,fill] at (v3){};
\node[above] at (v3){$v_3$};
\coordinate(v4) at (-1/2,1){};
\node[left] at (v4){$u$};
\node[draw,orange,circle,inner sep=1pt,fill] at (v4){};
\draw[-,thick,postaction={decorate}](v1)--(v4);
\draw[-, thick,postaction={decorate}](v4) --(v3); 
\draw[-,thick,postaction={decorate}](v1)--(v2);
\draw[-,thick,postaction={decorate}](v2)--(v3);

\draw[->, thick](2,1)--(3,1);

\node[left] at (-3/4,1/2){\textcolor{red}{$1$}};
\node[left] at (-1/4,3/2){\textcolor{red}{$2$}}; 
\node[below] at (0,0){\textcolor{red}{$2$}};
\node[right] at (1/2,1){\textcolor{red}{$1$}};
\node[below] at (0,-1/2){$\Gamma''$};

\node[draw,orange,circle,inner sep=1pt, fill] at (1/2,0){};
\node[draw,blue,circle,inner sep=1pt,fill] at (-1/6,10/6){};

\coordinate(x4) at (4,2){};
\coordinate(x3) at (4,4/3){};
\coordinate(x2) at (4,3/4){};
\coordinate(x1) at (4,0){};
\draw[-,thick](x1)--(x4);
\node[right] at (x1){$\alpha(v_1)$};
\node[draw,circle,inner sep=1pt,fill] at (x1){};
\node[below] at (4,-1/2){$X$};

\node[right] at (x4){$\alpha(v_3)$};
\node[draw,circle,inner sep=1pt,fill] at (x4){};

\node[right] at (x3){$\alpha(v_2)$};
\node[draw,blue,circle,inner sep=1pt,fill] at (x3){};

\node[right] at (x2){$\alpha(u)$};
\node[draw,orange, circle,inner sep=1pt,fill] at (x2){};

\end{tikzpicture}
\]

The three rays of the corresponding cone are obtained by specializing the values $\alpha(v_1),\alpha(u),\alpha(v_2),\alpha(v_3)$ so that only two of them are distinct. There are thus three ways to do so, respecting the given total order $\alpha(v_1) \le \alpha(u) \le \alpha(v_2) \le \alpha(v_3)$:

\begin{align*}
\alpha(v_1) = \alpha(u) = \alpha(v_2) < \alpha(v_3)\\
\alpha(v_1) < \alpha(u) = \alpha(v_2) = \alpha(v_3)\\
\alpha(v_1) = \alpha(u) < \alpha(v_2) = \alpha(v_3)
\end{align*}

The first of these is the specialization 

\[
\begin{tikzpicture}
[decoration={markings, 
    mark= at position 0.5 with {\arrow{stealth}},
    }
]
    \draw [thick,domain=-180:180,postaction={decorate}] plot ({cos(\x)},{3/2*sin(\x)});

    \draw [thick,domain=180:-180,postaction={decorate}] plot ({-cos(\x)},{-3/2*sin(\x)});

    \draw[->,thick](2,0)--(3,0);

    \draw[thick,domain=-3/2:3/2]
    plot ({4},{\x}); 

\coordinate(x1) at (0,3/2){}; 
\coordinate(x2) at (0,-3/2){};
\coordinate(y1) at (4,3/2){};
\coordinate(y2) at (4,-3/2){};
\coordinate(z1) at (1,0){};
\coordinate(z2) at (-1,0){};

\node[draw,circle,inner sep=1pt,fill] at (x1){};
\node[above] at (x1){$v_3$};

\node[draw,circle,inner sep=1pt,fill] at (x2){};
\node[below] at (x2){$v_1$};

\node[draw,circle,inner sep=1pt,fill] at (y1){};
\node[above] at (y1){$\alpha(v_3)$};

\node[draw,circle,inner sep=1pt,fill] at (y2){};
\node[below] at (y2){$\alpha(v_1)=\alpha(u)=\alpha(v_2)$};

\node[right] at (z1){\textcolor{red}{$1$}};

\node[left] at (z2){\textcolor{red}{$2$}};

\end{tikzpicture}
\]
Here, the subdivision demanded that 
$$
\ell_2 = \ell_2' + \ell_2''
$$
and 
$$
\ell_2' \le 2 \ell_1.
$$
Since $\ell_1$ is contracted, this leads to the relation 
$$
\ell_2 = \ell_2''
$$
where the slope of $\alpha$ is $2$, and so to the equation 
$$
2\ell_2 = \ell_3.
$$
Thus, the specialization is the ray through the point  
$$
(0,1,2).
$$
The other specializations are obtained similiarily, leading to the points $(1,2,0),(1,1,0)$. The integral structure of the cone is then 
$$
\NN(0,1,2) \oplus \NN(1,2,0) \oplus \NN(1,1,0) \subset \NN^3.
$$
Its index is computed as the determinant of the three vectors, and is found to be $2$.

\bibliography{refs2}
\vspace{8pt}

\noindent Departement Mathematik, ETH Z\"urich\\
\noindent Rämistrasse 101, 8092 Zürich, Switzerland\\
\noindent samouil.molcho@math.ethz.ch

\end{document}